\tikzset{my loop/.style =  {to path={
  \pgfextra{}
  [looseness=12,min distance=10mm]
  \tikz@to@curve@path},font=\sffamily\small
  }}
\theoremstyle{definition}
\newtheorem{thm}{Theorem}[section]
\newtheorem{prop}[thm]{Proposition}
\newtheorem{lem}[thm]{Lemma}
\newtheorem{cor}[thm]{Corollary}
\newtheorem{defn}[thm]{Definition}
\newtheorem{conj}[thm]{Conjecture}
\theoremstyle{remark}
\newtheorem*{rem}{Remark}
\DeclareMathOperator{\Hom}{Hom} \DeclareMathOperator{\Aut}{Aut}
\begin{document}

\title{Tensor Product of Polygonal Cell Complexes}
\author{ Yu-Yen Chien \\ Mathematics Division \\ National Center for Theoretical Science \\  \textsc{Taiwan}
\\ \texttt{\small{yychien@ncts.ntu.edu.tw}} }
\date{\today}

\maketitle

\begin{abstract}
We introduce the tensor product of polygonal cell complexes, which
interacts nicely with the tensor product of link graphs of
complexes. We also develop the unique factorization property of
polygonal cell complexes with respect to the tensor product, and
study the symmetries of tensor products of polygonal cell complexes.
\end{abstract}

\section{Introduction}

A {\bf polygonal cell complex} is a 2-dimensional CW-complex with
polygons as 2-cells, namely a graph with polygons attached. To be
precise, we take a rather formal definition: a polygonal cell
complex is a 2-dimensional CW-complex satisfying:

\vspace{-3pt}
\begin{itemize}
\itemsep=-3pt
\item  [(1)] Each 1-cell is an interval of length 1, and each 2-cell
is a disc of positive integral circumference.
\item  [(2)] For a 2-cell of circumference $n$, the attaching map
sends exactly $n$ points evenly distributed on the boundary to the
0-skeleton.
\item  [(3)] For each boundary segment between the points described in (2),
the attaching map sends the segment isometrically onto an open
1-cell.
\end{itemize}
\vspace{-6pt}

Intuitively speaking, we can think of each 2-cell as a regular
polygon, and the attaching map glues vertices to vertices, and edges
to edges. Those 2-cells act like faces of a polyhedron, and we will
use the word face to denote a 2-cell alternatively. Note that the
attaching map of a face may not be injective, and a polygonal cell
complex can be quite different from polyhedra. A {\bf polygonal
complex}, which simulates polyhedra better, is a polygonal cell
complex satisfying:

\vspace{-3pt}
\begin{itemize}
\itemsep=-3pt
\item  [(i)] The attaching map of each cell is injective.
\item  [(ii)] The intersection of any two closed cell is either empty or exactly one closed cell.
\end{itemize}
\vspace{-6pt}

Unless otherwise specified, when we use the word complex, it means
polygonal cell complex, which may or may not be a polygonal complex.

Here is a concise way to describe the local structure of complexes.
For a polygonal cell complex $X$, the {\bf link} of $X$ at a vertex
$v$ is a graph $L(X,v)$ with vertices indexed by ends of edges
attached to $v$, and edges indexed by corners of faces attached to
$v$. Two vertices $v_1$ and $v_2$ in $L(X,v)$ are joined by an edge
$e$ if and only if the corresponding ends of $v_1$ and $v_2$ are
joined by the corresponding corner of $e$. Basically a link
describes the incidence relation of edges and faces at a vertex.
Note that $L(X,v)$ can also be identified as the set $\{x\in X\mid
d(x,v)=\delta\}$, where $d$ is the distance function in $X$ and
$\delta$ is some positive number less than 1/2.

\begin{figure}
\begin{center}
\begin{tikzpicture}[scale=.8]

\filldraw [lightgray](-2,0)--(0,0)--(0,1.1)--cycle;

\draw[fill] (2,0) circle [radius=0.03] (-2,0) circle [radius=0.03]
(0,3.3) circle [radius=0.03]; \draw[->](-1.9,0)--(1.9,0);
\draw[->](-1.95,.0825)--(-0.05,3.2175);
\draw[->](0.05,3.2175)--(1.95,.0825);

\node [left] at (-2,0) {$v$}; \node [right] at (2,0) {$v$}; \node
[above] at (0,3.3) {$v$}; \node [below] at (0,-.05) {$e$}; \node
[left] at (-1.1,1.7) {$e$}; \node [right] at (1.1,1.7) {$e$}; \node
[right] at (0,1.1) {$f$};
\end{tikzpicture}
\caption{a flag of a dunce hat}\label{figa2}
\end{center}
\end{figure}
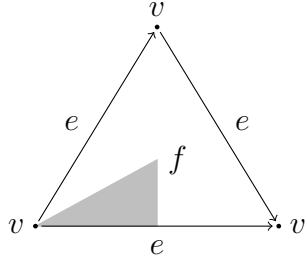

Take the dunce hat in Figure \ref{figa2} as an example. Although
there is only one edge in the complex, this edge has two ends
attached to $v$, and therefore contributes two vertices to the link
at $v$. Notice that the top corner of the face joins these two ends,
and corresponds to an edge joining two vertices in the link at $v$.
The left corner of the face joins the same end of the edge, and
hence corresponds to a loop in the link, while the right corner of
the face also corresponds to a loop at the other vertex. Therefore
the link at $v$ is a graph with two vertices $e_1$ and $e_2$, one
edge joining $e_1$ and $e_2$, and two loops at $e_1$ and $e_2$
respectively.

For polyhedra, a {\bf flag} is an incident triple of face, edge, and
vertex. Such definition needs to be modified for polygonal cell
complexes. Take Figure \ref{figa2} as an example again. It has only
one vertex, one edge, and one face, but we would like it to have six
flags just as a usual triangle. In a polygon, each flag corresponds
to a triangle in its barycentric subdivision. We can use this as an
alternative definition of a flag, and this definition works for
polygonal cell complexes as well. As Figure \ref{figa2} shows, the
shaded area is a flag of the dunce hat, and a dunce hat has six
flags.

Highly symmetric polygonal complexes have been studied in
\cite{bball,jlvv,swiat,valle}. In particular, simply-connected
flag-transitive polygonal complexes with complete graphs as links
are classified in \cite{jlvv}. The main motivation of this paper is
to use these flag-transitive complexes to generate more
flag-transitive complexes. More specifically, we would like to
develop a product of complexes which preserves flag-transitivity,
and the link of the product is some graph product of the links of
factors.

\section{Graph Tensor Product}\label{ch_GTP}

Suppose that $\bullet$ is certain type of graph product such that
$V(\Gamma\bullet\Gamma')=V(\Gamma)\times V(\Gamma')$, and we want to
define a complex product $*$ with the following property: for any
complexes $X$ and $X'$, and for any vertices $v\in X$ and $v'\in
X'$, we have
$$ L(X,v)\bullet L(X',v')\cong L(X*X',(v,v')).$$
Here we have already assumed that $V(X*X')=V(X)\times V(X')$. The
above property provides sufficient information about how the complex
product $*$ shall be defined. If we assume the 1-skeletons of $X$
and $X'$ are simple graphs, by considering the vertex sets of two
link graphs in the equation, we have
$$\{\text{neighbours of }v\text{ in }X\}\times\{\text{neighbours of }v'\text{ in }X'\}=\{\text{neighbours of }(v,v')\text{ in }X*X'\},$$
which can be interpreted as two vertices $(v,v')$ and $(u,u')$ are
adjacent in $X*X'$ if and only if $v$ is adjacent to $u$ in $X$ and
$v'$ is adjacent to $u'$ in $X'$. This is essentially the definition
of the direct product of simple graphs. Since the 1-skeletons of
complexes are not necessarily simple, we shall generalize the direct
product to suit arbitrary graphs.

\begin{figure}
\begin{center}
\tikzstyle{place}=[circle,draw=black,inner sep=0pt,minimum size=2mm]
\begin{tikzpicture}[scale=1]

\node (1) at (0,0) [place] {};

\node (4) at (0,2) [place] {};

\node (7) at (0,-1) [place] {};

\node (x) at (-1.4,0) [place] {};

\node (y) at (-1.4,2) [place] {};

\path (1) ++(2,0) node (2) [place] {} ++(2,0) node (3) [place] {};

\path (4)--++(2,0) node (5) [place] {}--++(2,0) node (6) [place] {};

\path (7)--++(2,0) node (8) [place] {}--++(2,0) node (9) [place] {};

\draw (x) to (y) (7) to (8) (4) to (2) (5) to (1);

\draw (5) to  [out=-45+18,in=135-18] (3);

\draw (5) to  [out=-45-18,in=135+18] (3);

\draw (2) to  [out=45+18, in=225-18](6);

\draw (2) to  [out=45-18, in=225+18](6);

\draw [->>](4) [out=245,in=115] to (1);

\draw [<<-](4) [out=295,in=65] to (1);

\draw [->>](7) to [min distance=12mm, out=135, in=-135] (7);

\draw (8) to [out=25,in=155](9) ;

\draw (8) to [out=-25,in=-155](9) ;
\end{tikzpicture}\vspace{-6mm}
\caption{tensor product of non-simple graphs}\label{fige1}
\end{center}
\end{figure}
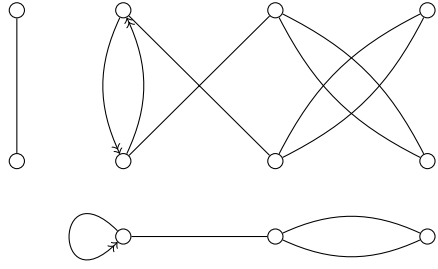

\begin{defn}\label{e1}
Suppose that $\Gamma$ and $\Gamma'$ are two arbitrary graphs with
edge sets $E(\Gamma)=\{e_\alpha\mid\alpha\in A\}$ and
$E(\Gamma')=\{e_\beta\mid\beta\in B\}$. The {\bf tensor product} of
$\Gamma$ and $\Gamma'$, denoted by $\Gamma\otimes\Gamma'$, is a
graph with vertex set $V(\Gamma\otimes\Gamma')=V(\Gamma)\times
V(\Gamma')$, and edge set
$$E(\Gamma\otimes\Gamma')=\big\{ e_{\alpha,\beta}^\delta\mid\alpha\in A, \beta\in B, \delta\in\{0,1\}
\big\},$$ where $e_{\alpha,\beta}^\delta$ is an edge joining
$(v_0,v'_\delta)$ and $(v_1,v'_{1-\delta})$, given $e_\alpha$ joins
$v_0$ and $v_1$ in $\Gamma$, and $e_\beta$ joins $v'_0$ and $v'_1$
in $\Gamma'$.
\end{defn}

Note that for simple graphs, the tensor product defined above is
exactly the direct product of graphs. Like direct product, each pair
of edges from two factors generates two edges in the tensor product,
even when loops are involved, as illustrated in Figures \ref{fige1}
and \ref{fige2}. In some literatures such as \cite{product}, direct
product is defined over graphs without parallel edges but admitting
loops. In such definition, a loop serves as the identity of direct
product. In particular a loop times an edge is an edge, and a loop
times a loop is again a loop, while in our definition a loop times
an edge is two parallel edges, and a loop times a loop creates two
loops around the same vertex. Since we will need such direct product
later, we take a different name and symbol for our generalized
product.

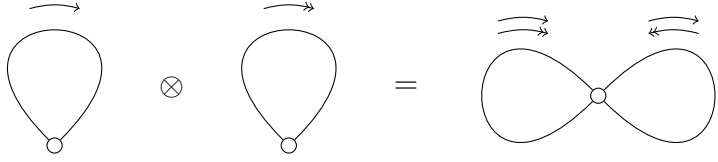
\begin{figure}
\begin{center}
\tikzstyle{place}=[circle,draw=black,inner sep=0pt,minimum size=2mm]
\vspace{-10mm}
\begin{tikzpicture}[scale=1.1]

\node (1) at (0,0) [place] {};

\node (2) at (2.8,0) [place] {};

\node (3) at (6.5,.6) [place] {};

\node at (9,0) {};

\node at (1.4,.7) {$\otimes$};

\node at (4.2,.7) {=};

\draw (1) to [min distance=25mm, out=45, in=135] (1);

\draw (2) to [min distance=25mm, out=45, in=135] (2);

\draw (3) to [min distance=25mm, out=135, in=-135] (3);

\draw (3) to [min distance=25mm, out=45, in=-45] (3);

\draw [->] (-.3,1.65) to [out=15,in=165] (.3,1.65);

\draw [->>] (2.5,1.65) to [out=15,in=165] (3.1,1.65);

\draw [->] (5.3,1.5) to [out=15,in=165] (5.9,1.5);

\draw [->>] (5.3,1.35) to [out=15,in=165] (5.9,1.35);

\draw [->] (7.1,1.5) to [out=15,in=165] (7.7,1.5);

\draw [<<-] (7.1,1.35) to [out=15,in=165] (7.7,1.35);

\end{tikzpicture}\vspace{-10mm}
\caption{tensor product of two loops}\label{fige2}
\end{center}
\end{figure}

There are some reasons to define tensor product in this manner.
First, note the number of vertices in $L(X,v)$ is exactly the
valency of $v$ in $X$, where a loop at $v$ contributes 2 to the
number. Assuming $ L(X,v)\bullet L(X',v')\cong L(X*X',(v,v'))$, this
implies $$d_X(v)\cdot d_{X'}(v')=d_{X*X'}((v,v')),$$ which is true
for the tensor product, but not for the direct product admitting
loops. Secondly, when we glue a face along a loop, the orientation
of gluing matters, and the tensor product can keep track of such
orientations. In Definition \ref{e1}, when $e_\alpha$ or $e_\beta$
is a loop, we shall think of it as an edge joining two different
ends of the loop, say $+$ and $-$, and label two ends of
$e_{\alpha,\beta}^\delta$ by $+$ and $-$ accordingly. We can then
lift any given orientation of a loop in a factor to edges generated
by this loop in the product, as illustrated in Figures \ref{fige1}
and \ref{fige2}. This also allows us to define projections
unambiguously. Note that we do not assume graphs to be directed. We
just distinguish two ends of each loop.

\begin{defn}\label{e2}
Assume the notation of Definition \ref{e1}. The {\bf projection}
from $\Gamma\otimes\Gamma'$ to $\Gamma$, denoted by $\pi_\Gamma$, is
a continuous function such that $\pi_\Gamma$ maps $(v,v')\in
V(\Gamma\otimes\Gamma')$ to $v\in V(\Gamma)$, and
$e_{\alpha,\beta}^\delta\in E(\Gamma\otimes\Gamma')$ to $e_\alpha
\in E(\Gamma)$ isometrically between endpoints. The projection
$\pi_{\Gamma'}$ from $\Gamma\otimes\Gamma'$ to $\Gamma'$ is likewise
defined.
\end{defn}

The projections defined above are graph homomorphisms in the
following sense.

\begin{defn}
Let $\Gamma$ and $\Gamma'$ be two arbitrary graphs. A continuous
function $\varphi$ from $\Gamma$ to $\Gamma'$ is a {\bf
homomorphism} if $\varphi$ maps each vertex of $\Gamma$ to a vertex
of $\Gamma'$, and each open edge of $\Gamma$ isometrically onto an
open edge of $\Gamma'$.
\end{defn}
\begin{rem}
In the above definition, the continuity of $\varphi$ is essentially
saying that a homomorphism maps incident vertices and edges to
incident vertices and edges. Meanwhile, the isometric condition
helps to choose a representative from all homotopic maps.

\end{rem}

Note that the composition of two graph homomorphisms is again a
graph homomorphism. Together with the trivial automorphisms, the
class of graphs forms a category. The following proposition shows
that the tensor product defined above is actually the categorical
product of this category.

\begin{prop}\label{e3}
Let $\Gamma$ and $\Gamma'$ be two arbitrary graphs. Suppose that
$\Gamma_0$ is a graph with two homomorphisms
$\varphi:\Gamma_0\rightarrow\Gamma$ and
$\varphi':\Gamma_0\rightarrow\Gamma'$. Then there exists a unique
homomorphism $\psi:\Gamma_0\rightarrow \Gamma\otimes\Gamma'$ such
that $\varphi=\pi_\Gamma\circ\psi$ and
$\varphi'=\pi_{\Gamma'}\circ\psi$. In other words, there exists a
unique $\psi$ such that the diagram in Figure \ref{fige3} commutes.
\end{prop}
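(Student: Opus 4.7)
The plan is to construct $\psi$ by forcing it on vertices and then on edges, with essentially no freedom of choice, so that uniqueness comes for free and only well-definedness needs checking.

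First I would define $\psi$ on the $0$-skeleton. Since $\pi_\Gamma\circ\psi=\varphi$ and $\pi_{\Gamma'}\circ\psi=\varphi'$, any vertex $u\in V(\Gamma_0)$ must satisfy $\psi(u)=(\varphi(u),\varphi'(u))$, and this is a vertex of $\Gamma\otimes\Gamma'$ by the definition of the vertex set of the tensor product. Then for each edge $e\in E(\Gamma_0)$ joining vertices $u_0,u_1$ (with $+$ and $-$ ends chosen when $e$ is a loop), $\varphi$ maps $e$ isometrically onto some edge $e_\alpha\in E(\Gamma)$ joining $v_0:=\varphi(u_0)$ and $v_1:=\varphi(u_1)$, and $\varphi'$ maps $e$ isometrically onto some edge $e_\beta\in E(\Gamma')$ joining $v'_0:=\varphi'(u_0)$ and $v'_1:=\varphi'(u_1)$. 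Since any extension $\psi$ must project to $\varphi$ and $\varphi'$, $\psi(e)$ is forced to lie in the fibre $\pi_\Gamma^{-1}(e_\alpha)\cap\pi_{\Gamma'}^{-1}(e_\beta)=\{e^0_{\alpha,\beta},e^1_{\alpha,\beta}\}$.

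Next I would pin down $\delta\in\{0,1\}$. The two endpoints of $e$ are sent by $\psi$ to $(v_0,v'_0)$ and $(v_1,v'_1)$ (using the convention that $+/-$ ends of a loop correspond consistently in $\Gamma_0$ and in the images). Comparing with the definition of $e^\delta_{\alpha,\beta}$, which joins $(v_0,v'_\delta)$ and $(v_1,v'_{1-\delta})$, there is exactly one value of $\delta$ for which the endpoint assignment agrees; take $\psi(e)$ to be that $e^\delta_{\alpha,\beta}$, mapped isometrically. This defines $\psi$ on each open edge and matches up correctly at endpoints, so $\psi$ extends continuously over all of $\Gamma_0$ and is, by construction, a graph homomorphism.

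Finally I would check commutativity and uniqueness. Commutativity on vertices is immediate, and on each edge it follows because $\pi_\Gamma$ sends $e^\delta_{\alpha,\beta}$ isometrically to $e_\alpha$ and likewise for $\pi_{\Gamma'}$, matching $\varphi$ and $\varphi'$ both on endpoints and on the isometric parametrisation. Uniqueness is clear because every step above was forced by the commutativity requirement. The main subtlety, and the only place where care is required, is the loop case: when $e_\alpha$ or $e_\beta$ is a loop, the choice of $\delta$ must be made using the labelling of the two ends by $+$ and $-$ rather than by endpoint vertices; this is exactly the reason Definition \ref{e1} was phrased using ends rather than vertices, and handling this bookkeeping consistently is the one technical point I expect to spell out carefully in the full proof.
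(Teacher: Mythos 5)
Your proposal is correct and follows essentially the same route as the paper's proof: force $\psi$ on vertices via the projection conditions, observe that the two projections confine $\psi(e)$ to $\{e^0_{\alpha,\beta},e^1_{\alpha,\beta}\}$ with $\delta$ pinned down by endpoints (and by the $+/-$ end-labels in the loop case), and note that the isometry conditions force the parametrisation, so the forced construction yields both existence and uniqueness. The loop-end bookkeeping you flag as the one delicate point is exactly the point the paper's proof singles out as well.
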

\begin{proof}
Assume that there exists a continuous function
$\psi:\Gamma_0\rightarrow \Gamma\otimes\Gamma'$ such that
$\varphi=\pi_\Gamma\circ\psi$ and $\varphi'=\pi_{\Gamma'}\circ\psi$.
Then $\forall v\in V(\Gamma_0)$, we have
$\varphi(v)=\pi_\Gamma\circ\psi(v)$ and
$\varphi'(v)=\pi_{\Gamma'}\circ\psi(v)$. By Definition \ref{e2}, we
know that $\psi(v)=(\varphi(v),\varphi'(v))$.

Suppose that $e$ is an open edge joining $v$ and $u$ in $\Gamma_0$,
and we denote $\varphi(e)$ and $\varphi'(e)$ by $e_\alpha$ and
$e_\beta$ respectively. By the continuity of $\psi$, $\psi(e)$ is an
open path connecting $(\varphi(v),\varphi'(v))$ and
$(\varphi(u),\varphi'(u))$. Notice that
$e_\alpha=\varphi(e)=\pi_\Gamma\circ\psi(e)$ and
$e_\beta=\varphi'(e)=\pi_{\Gamma'}\circ\psi(e)$. By Definition
\ref{e2}, we know $\psi(e)$ is either $e_{\alpha,\beta}^0$ or
$e_{\alpha,\beta}^1$, determined by endpoints
$(\varphi(v),\varphi'(v))$ and $(\varphi(u),\varphi'(u))$. In case
$e_\alpha$ or $e_\beta$ is a loop, by keeping track of ends of the
loop, $\psi(e)$ is also uniquely determined. Moreover, the local
isometry over open edges of $\varphi$ and $\pi_{\Gamma}$ forces
$\psi$ to map $e$ isometrically to $\psi(e)$. Note that we have
explicitly constructed a continuous $\psi$ satisfying our initial
assumption. We have also shown that $\psi$ is uniquely determined,
and actually a homomorphism, which finishes the proof.
\end{proof}

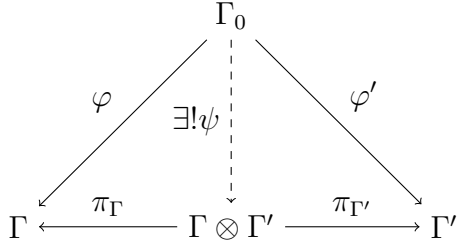
\begin{figure}
\begin{center}
\tikzstyle{place}=[circle,draw=black,inner sep=0pt,minimum size=2mm]
\begin{tikzpicture}[scale=1.4]

\node (1) at (0,0) {$\Gamma\otimes\Gamma'$};

\node (2) at (-2,0) {$\Gamma$};

\node (3) at (2,0) {$\Gamma'$};

\node (4) at (0,2) {$\Gamma_0$};

\draw [->] (1) -- node [above] {$\pi_\Gamma$} (2);

\draw [->] (1) -- node [above] {$\pi_{\Gamma'}$} (3);

\draw [->] (4) -- node [above left] {$\varphi$} (2);

\draw [->] (4) -- node [above right] {$\varphi'$}(3);

\draw [->,dashed] (4) -- node [left] {$\exists!\psi$} (1);

\end{tikzpicture}
\caption{universal property of graph tensor product}\label{fige3}
\end{center}
\end{figure}

For any two graphs $\Gamma$ and $\Gamma'$, we denote the set of all
homomorphisms from $\Gamma$ to $\Gamma'$ by $\Hom(\Gamma,\Gamma'$).
We have the following corollary about the number of homomorphisms.

\begin{cor}\label{e3a}
For any graphs $\Gamma$, $\Gamma_1$, $\Gamma_2$, we have
$$|\Hom(\Gamma,\Gamma_1\otimes\Gamma_2)\,|=|\Hom(\Gamma,\Gamma_1))\,|\cdot|\Hom(\Gamma,\Gamma_2)\,|.$$
\end{cor}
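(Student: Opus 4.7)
The plan is to deduce Corollary \ref{e3a} directly from the universal property established in Proposition \ref{e3}, by exhibiting an explicit bijection between $\Hom(\Gamma,\Gamma_1\otimes\Gamma_2)$ and the Cartesian product $\Hom(\Gamma,\Gamma_1)\times\Hom(\Gamma,\Gamma_2)$. Since the composition of homomorphisms is a homomorphism, postcomposition with the projections $\pi_{\Gamma_1}$ and $\pi_{\Gamma_2}$ yields a well-defined map
\[
\Phi:\Hom(\Gamma,\Gamma_1\otimes\Gamma_2)\longrightarrow\Hom(\Gamma,\Gamma_1)\times\Hom(\Gamma,\Gamma_2),\qquad \psi\longmapsto(\pi_{\Gamma_1}\circ\psi,\ \pi_{\Gamma_2}\circ\psi).
\]
The claim is that $\Phi$ is a bijection, from which the cardinality identity is immediate.

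For surjectivity, take any pair $(\varphi_1,\varphi_2)\in\Hom(\Gamma,\Gamma_1)\times\Hom(\Gamma,\Gamma_2)$. Proposition \ref{e3} (applied with $\Gamma_0=\Gamma$) guarantees the existence of a homomorphism $\psi:\Gamma\to\Gamma_1\otimes\Gamma_2$ with $\pi_{\Gamma_1}\circ\psi=\varphi_1$ and $\pi_{\Gamma_2}\circ\psi=\varphi_2$, so $\Phi(\psi)=(\varphi_1,\varphi_2)$. For injectivity, suppose $\Phi(\psi)=\Phi(\psi')$; then both $\psi$ and $\psi'$ make the diagram in Figure \ref{fige3} commute for the same pair $(\varphi_1,\varphi_2)$, and the uniqueness clause of Proposition \ref{e3} forces $\psi=\psi'$.

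No real obstacle arises here: the corollary is essentially just the set-theoretic translation of the universal property, since a categorical product in any category always satisfies $\Hom(-,A\times B)\cong \Hom(-,A)\times\Hom(-,B)$. The only minor point worth flagging is that the statement should be interpreted as an equality of cardinalities, which holds whether the hom-sets are finite or infinite; in the finite case, it becomes the stated multiplicative identity.
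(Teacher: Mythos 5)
Your proof is correct and is precisely the argument the paper leaves implicit: the paper's own proof is the one-line remark that the corollary is ``an immediate consequence of Proposition \ref{e3}'', and your bijection $\psi\mapsto(\pi_{\Gamma_1}\circ\psi,\pi_{\Gamma_2}\circ\psi)$, with surjectivity from the existence clause and injectivity from the uniqueness clause, is exactly that consequence spelled out. No issues.
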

\begin{proof}An immediate consequence of Proposition
\ref{e3}.\end{proof}

Note that for any graph $\Gamma$, there is a homomorphism from
$\Gamma$ to a loop. Since we distinguish the orientations when we
map an edge to a loop, there are actually $2^n$ such homomorphisms,
where $n$ is the number of edges of $\Gamma$. In particular, a loop
is not the terminal object in the category of arbitrary graphs.


\begin{cor}\label{e4}
Let $\Gamma$ and $\Gamma'$ be two graphs, $P$ be a path in $\Gamma$
of length $n$  from $v$ to $u$, and $P'$ be a path in $\Gamma'$ of
length $n$ from $v'$ to $u'$. Then in $\Gamma\otimes\Gamma'$, there
exists a unique path, denoted by $(P,P')_\otimes$, from $(v,v')$ to
$(u,u')$ such that $\pi_\Gamma((P,P')_\otimes)=P$ and
$\pi_{\Gamma'}((P,P')_\otimes)=P'$.
\end{cor}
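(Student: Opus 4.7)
The plan is to reduce the statement to a direct application of the universal property in Proposition \ref{e3}. The key observation is that a path of length $n$ in any graph is precisely the same data as a graph homomorphism out of a fixed path graph. Concretely, let $I_n$ be the graph with vertices $w_0,w_1,\ldots,w_n$ and edges $w_i w_{i+1}$ for $0\le i<n$. A path $P$ in $\Gamma$ of length $n$ from $v$ to $u$ corresponds to a graph homomorphism $\varphi_P:I_n\to\Gamma$ with $\varphi_P(w_0)=v$ and $\varphi_P(w_n)=u$ that sends each open edge $w_iw_{i+1}$ isometrically onto the $i$-th edge of $P$; likewise $P'$ corresponds to a homomorphism $\varphi_{P'}:I_n\to\Gamma'$.

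Once this reformulation is in place, I would apply Proposition \ref{e3} with $\Gamma_0=I_n$, obtaining a unique homomorphism $\psi:I_n\to\Gamma\otimes\Gamma'$ satisfying $\pi_\Gamma\circ\psi=\varphi_P$ and $\pi_{\Gamma'}\circ\psi=\varphi_{P'}$. Then I would define $(P,P')_\otimes$ to be the image $\psi(I_n)$, traversed in the order $w_0,w_1,\ldots,w_n$. Since $\psi(w_0)=(v,v')$ and $\psi(w_n)=(u,u')$ by the vertex formula established in the proof of Proposition \ref{e3}, and since each open edge of $I_n$ is mapped isometrically onto an open edge of $\Gamma\otimes\Gamma'$, this is indeed a path of length $n$ from $(v,v')$ to $(u,u')$, and its projections are $P$ and $P'$ by construction.

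For uniqueness, any path $Q$ in $\Gamma\otimes\Gamma'$ of length $n$ from $(v,v')$ to $(u,u')$ projecting to $P$ and $P'$ is itself parametrised by a homomorphism $I_n\to\Gamma\otimes\Gamma'$ whose compositions with $\pi_\Gamma$ and $\pi_{\Gamma'}$ equal $\varphi_P$ and $\varphi_{P'}$ respectively, so it must coincide with $\psi$ by the uniqueness clause of Proposition \ref{e3}.

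The only subtle point I foresee is the case when an edge of $P$ or $P'$ is a loop. There the homomorphism $\varphi_P$ is not determined by its values on vertices alone, but the discussion after Definition \ref{e1}, which labels the two ends of each loop and lifts orientations to the tensor product, ensures that "traversing a loop" in $P$ carries a well-defined orientation data, so $\varphi_P$ is unambiguous and the universal property still produces a unique $\psi$. I do not expect further difficulties, as everything else amounts to unwinding the definitions.
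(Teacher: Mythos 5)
Your proposal is correct and is essentially the same argument as the paper's: the paper also introduces an oriented path graph $I$ of length $n$, realises $P$ and $P'$ as homomorphisms out of $I$, and invokes the universal property of Proposition \ref{e3} to obtain existence and uniqueness of $(P,P')_\otimes$. Your remark on the loop case matches the paper's own remark following the corollary.
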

\begin{proof}
Let $I$ be a graph which is a path of length $n$. We can give $I$ a
specific orientation from one end to the other. Then there is a
natural homomorphism $\varphi$ from $I$ to $P$, as well as one
$\varphi'$ from $I$ to $P'$. By Proposition \ref{e3}, there exists a
unique homomorphism $\psi:I\rightarrow \Gamma\otimes\Gamma'$ such
that $\varphi=\pi_\Gamma\circ\psi$ and
$\varphi'=\pi_{\Gamma'}\circ\psi$. Hence we have
$P=\varphi(I)=\pi_\Gamma\circ\psi(I)$ and
$P'=\varphi'(I)=\pi_{\Gamma'}\circ\psi(I)$. Note that $\psi(I)$
satisfies the conditions of $(P,P')_\otimes$, and the uniqueness of
$(P,P')_\otimes$ follows the uniqueness of $\psi$.
\end{proof}
\begin{rem}
For simple graphs, this result is straightforward from the
definition of tensor product. This corollary clarifies the case when
$P$ or $P'$ contains a loop, where the orientation going through the
loop will determine the edge to choose in $(P,P')_\otimes$.
\end{rem}

\section{Complex Tensor Product}\label{ch_CTP}

To define our complex product more concisely, we would like to
extend the notation $(\;\;,\;\;)_\otimes$ above. Let $\Gamma_1$ and
$\Gamma_2$ be two graphs, $C_1$ be a cycle of length $n$ in
$\Gamma_1$, and $C_2$ be a cycle of length $m$ in $\Gamma_2$. Both
$C_1$ and $C_2$ are assigned initial vertices and orientations.
Specifically, $C_2$ is $(v_0, e_0, v_1, e_1,\ldots, e_{m-1},
v_m=v_0)$, where $v_i\in V(\Gamma_2)$ and $e_j\in E(\Gamma_2)$. Then
for $i\in\{0,1,\ldots,m-1\}$ we define
$$(C_1,C_2)_\otimes^{i^\delta}:=(\frac{[n,m]}{n}C_1,\frac{[n,m]}{m}C_2^{i^\delta})_\otimes,$$
a cycle of length $[n,m]$ in $\Gamma_1\otimes\Gamma_2$, where
$[n,m]$ is the least common multiple of $n$ and $m$, $kC_j$ is the
cycle repeating $C_j$ $k$ times, and $C_2^{i^0}$ is the same cycle
as $C_2$, but starting at $v_i$, while $C_2^{i^1}$ is the reversed
cycle of $C_2$ starting at $v_i$.

\begin{defn}\label{e5}
Let $X$ and $Y$ be two polygonal cell complexes with face sets
$F(X)=\{f_\alpha\mid\alpha\in A\}$ and $F(Y)=\{f_\beta\mid\alpha\in
B\}$. We denote the boundary length of $f_\alpha$ and $f_\beta$ by
$n_\alpha$ and $n_\beta$ respectively, and let $(n_\alpha,n_\beta)$
denote the greatest common divisor of $n_\alpha$ and $n_\beta$. The
{\bf tensor product} of $X$ and $Y$, denoted by $X\otimes Y$, is a
polygonal cell complex with 1-skeleton $X^1\otimes Y^1$, the tensor
product of the 1-skeletons of $X$ and $Y$, and face set
$$F(X\otimes Y)=\big\{f_{\alpha,\beta}^{i^\delta}\mid\alpha\in A, \beta\in B, i\in\{0,1,\ldots,(n_\alpha,n_\beta)-1\}, \delta\in\{0,1\}\big\}, $$
where $f_{\alpha,\beta}^{i^\delta}$ is a face attached along
$(C_\alpha,C_\beta)_\otimes^{i^\delta}$, while $C_\alpha$ is the
cycle along which $f_\alpha$ is attached in $X$, and $C_\beta$ is
the cycle along which $f_\beta$ is attached in $Y$.
\end{defn}

\begin{rem}
We will use the jargon that $f_{\alpha,\beta}^{i^\delta}$ is
generated by $f_\alpha$ and $f_\beta$, especially when faces are not
clearly indexed. In the above definition, note that
$(C_\alpha,C_\beta)_\otimes^{i^\delta}$ and
$(C_\alpha,C_\beta)_\otimes^{{i+(n_\alpha,n_\beta)}^\delta}$ are
identical cycles with different starting vertices. To let a pair of
corners of $f_\alpha$ and $f_\beta$ contribute to exactly one face
corner in $X\otimes Y$, we only choose
$i\in\{0,1,\ldots,(n_\alpha,n_\beta)-1\}$. Here we discard repeated
corner pairs, not faces in $X\otimes Y$ attached along the same
cycle. For example, let $X$ and $Y$ be 15-gons wrapped around a
cycle of length 3 and 5 respectively. Note that the tensor product
of a triangle and a pentagon is not the same as $X\otimes Y$. The
former has only $2\cdot(3,5)=2$ faces, while $X\otimes Y$ has
$2\cdot(15,15)=30$ faces in two groups, each of which has 15 faces
with cyclically identical attaching maps.
\end{rem}

In the example of a triangle tensor a pentagon, the only two faces
meet at every vertex in the product. In general, when $n_\alpha\neq
n_\beta$, two faces $f_{\alpha,\beta}^{i^0}$ and
$f_{\alpha,\beta}^{i^1}$ meet at more than one vertex. Therefore the
tensor product of two polygonal complexes is not necessarily
polygonal. How about the case when $n_\alpha= n_\beta$? For
$n_\alpha$ even, note that $f_{\alpha,\beta}^{i^0}$ and
$f_{\alpha,\beta}^{i^1}$ have two vertices $(0,0)$ and
$(\frac{n_\alpha}{2},\frac{n_\alpha}{2})$ in common, and the tensor
product is not polygonal. For odd cases, we have the following
result.

\begin{prop}\label{e5a}
Suppose that $X$ and $Y$ are polygonal complexes with all faces of
the same odd length $n$. Then the tensor product $X\otimes Y$ is a
polygonal complex.
\end{prop}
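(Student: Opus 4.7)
The plan is to verify the two defining conditions of a polygonal complex for $X \otimes Y$: condition (i) that each attaching map is injective, and condition (ii) that any two closed cells intersect in either nothing or exactly one closed cell. Condition (i) for 1-cells amounts to showing $X^1 \otimes Y^1$ has no loops and no parallel edges; this is immediate from Definition \ref{e1} because $X, Y$ have no loops in their 1-skeletons (so no edge $e_{\alpha,\beta}^\delta$ is a loop), and any two edges of $X^1 \otimes Y^1$ sharing both endpoints must, by projecting to the simple 1-skeletons of $X$ and $Y$, share their labels $\alpha$ and $\beta$, after which the endpoint condition pins down $\delta$. For 2-cells, since $n_\alpha = n_\beta = n$, the attaching cycle $(C_\alpha, C_\beta)^{i^\delta}_\otimes$ has vertices $(v_k, v'_{\sigma(k)})$ for $k = 0, \ldots, n-1$, where $\sigma$ is a bijection of $\mathbb{Z}/n$; since the $v_k$ are distinct (injectivity of $C_\alpha$ in $X$), the pairs are distinct, and distinctness of edges in the cycle follows in the same way.

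For condition (ii), the low-dimensional cases (vertex–vertex, vertex–edge, vertex–face, edge–edge) are immediate from (i) and simplicity of the 1-skeleton. For edge–face I would argue through projections: if both endpoints of an edge $e$ lie on a closed face $F$, then $\pi_X(e)$ has both endpoints on $\pi_X(F)$ in the polygonal complex $X$, forcing $\pi_X(e)$ to be an edge of $\partial \pi_X(F)$, and analogously for $Y$. Uniqueness of lifts (Corollary \ref{e4}) then identifies $e$ with the unique boundary edge of $F$ having those projections, so the intersection is a single closed cell.

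The face–face case is the heart of the proof, and I would split it by whether the two faces come from the same pair of factor faces. When $F_1 = f_{\alpha,\beta}^{i^\delta}$ and $F_2 = f_{\alpha,\beta}^{j^\epsilon}$ are distinct but share the factor pair, a direct vertex-set computation gives: for $\delta = \epsilon$ and $i \neq j$, no shared vertex; for $\delta \neq \epsilon$, the coincidence condition reduces to a congruence of the form $2k \equiv c \pmod{n}$, which has exactly one solution because $n$ is odd, yielding exactly one shared vertex and (since two shared vertices would be required for a shared edge) no shared edge. For faces from different factor pairs, I would use the polygonal-complex property of $X$ and $Y$: each of $f_\alpha \cap f_\gamma$ and $f_\beta \cap f_\eta$ is empty, a single vertex, or a single edge; a case analysis (tracking whether edge-intersections in the two factors are traversed with matching or opposite orientation) shows that the shared vertices in $X \otimes Y$ number at most two, and if two then they are joined by a common boundary edge of both $\partial F_1$ and $\partial F_2$, the edge being unique by Corollary \ref{e4}. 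The main obstacle — and the only place odd parity is essential — is the $\delta \neq \epsilon$ subcase: for even $n$ the congruence $2k \equiv c \pmod{n}$ would have two solutions, yielding two isolated shared vertices that violate condition (ii), consistent with the discussion preceding the proposition.
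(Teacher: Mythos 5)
Your proposal is correct, and its core step --- the congruence $2k\equiv c\pmod n$ having a unique solution because $n$ is odd, in the same-factor-pair, opposite-orientation subcase --- is exactly the paper's key argument. Where you genuinely diverge is in the case of faces coming from different factor pairs: the paper argues that a bad intersection forces two non-consecutive shared boundary vertices, projects these down to $X$ and $Y$, and invokes polygonality of the factors to conclude $f_\alpha=f_{\alpha'}$ and $f_\beta=f_{\beta'}$, thereby reducing everything to the same-pair case; you instead bound the cross-pair intersection directly, intersecting in each factor and using the fact that the vertex set of a product face is the graph of a bijection $V(C_\alpha)\to V(C_\beta)$ (so each vertex of $f_\beta\cap f_\eta$ has at most one compatible partner, giving at most two shared vertices, necessarily consecutive and hence joined by the unique common boundary edge). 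Both routes work; the paper's reduction is shorter, while your direct analysis is more self-contained and also makes explicit the verification of condition (i) and of the lower-dimensional cell intersections, which the paper dispatches in one line. One small repair your case split needs: ``different factor pairs'' includes $(\alpha,\beta)\neq(\gamma,\eta)$ with, say, $f_\alpha=f_\gamma$ but $f_\beta\neq f_\eta$, in which case $f_\alpha\cap f_\gamma$ is the whole closed face rather than empty, a vertex, or an edge; the same bijection bookkeeping still yields at most two shared vertices forming an edge, but the trichotomy as you state it does not literally cover this subcase.
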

\begin{proof}
Since $X$ and $Y$ are polygonal complexes, we know that $X^1$ and
$Y^1$ are simple graphs, and hence the 1-skeleton of $X\otimes Y$,
namely $X^1\otimes Y^1$, is a simple graph as well. Consider the
boundary of an arbitrary face $f_{\alpha,\beta}^{i^\delta}$ in
$X\otimes Y$, namely $(C_\alpha,C_\beta)^{i^\delta}_\otimes$. Note
that $C_\alpha$ and $C_\beta$ are both simple closed cycles of the
same length $n$, as they are boundaries of faces of polygonal
complexes. Therefore $(C_\alpha,C_\beta)^{i^\delta}_\otimes$ is a
simple closed cycle of length $n$. In brief, every face of $X\otimes
Y$ is attached along a simple closed cycle.

Now all we have to show is that the intersection of two faces in
$X\otimes Y$ is either empty, a vertex, or an edge in $X\otimes Y$.
Suppose that there exist two faces $f_{\alpha,\beta}^{i^\delta}$ and
$f_{\alpha',\beta'}^{j^{\delta'}}$ in $X\otimes Y$ such that the
intersection of $f_{\alpha,\beta}^{i^\delta}$ and
$f_{\alpha',\beta'}^{j^{\delta'}}$ is neither empty, a vertex, nor
an edge. For the case of $n=3$, it is not hard to see that
$f_{\alpha,\beta}^{i^\delta}$ and $f_{\alpha',\beta'}^{j^{\delta'}}$
share the same boundary, and in fact are the same face by the
polygonality of $X$ and $Y$. For the case of odd $n>3$, note that
$f_{\alpha,\beta}^{i^\delta}$ and $f_{\alpha',\beta'}^{j^{\delta'}}$
share two vertices which are not consecutive on the boundary of
faces. By the polygonality of $X$ and $Y$, this implies that
$f_\alpha=f_{\alpha'}$ and $f_\beta=f_{\beta'}$. Consider the
boundaries of $f_{\alpha,\beta}^{i^\delta}$ and
$f_{\alpha,\beta}^{j^{\delta'}}$, namely
$(C_\alpha,C_\beta)_\otimes^{i^{\delta}}$ and
$(C_\alpha,C_\beta)_\otimes^{j^{\delta'}}$. When $\delta=\delta'$
and $i\neq j$, $(C_\alpha,C_\beta)_\otimes^{i^{\delta}}$ and
$(C_\alpha,C_\beta)_\otimes^{j^{\delta'}}$ have no vertex in common.
When $\delta\neq\delta'$, notice that a common vertex of
$(C_\alpha,C_\beta)_\otimes^{i^{\delta}}$
 and $(C_\alpha,C_\beta)_\otimes^{j^{\delta'}}$ corresponds to an integer
 $m$ such that $$j+m\equiv i-m\hspace{0cm}\mod
n\hspace{.2cm}\Leftrightarrow\hspace{.2cm} 2m=i-j\mod n,$$ which has
a unique solution when $n$ is odd. In other words, when
$\delta\neq\delta'$, $(C_\alpha,C_\beta)_\otimes^{i^{\delta}}$ and
$(C_\alpha,C_\beta)_\otimes^{j^{\delta'}}$ intersect at exactly one
vertex. Since $f_{\alpha,\beta}^{i^\delta}$ and
$f_{\alpha,\beta}^{j^{\delta'}}$ share two vertices, we can conclude
that $\delta=\delta'$ and $i=j$. This finishes the proof.
\end{proof}

The complex tensor product does not preserve simple connectedness
either.

\begin{prop}\label{e5b}
Let $X$ and $Y$ be an $n$-gon and $m$-gon respectively, where $n$
and $m$ are two positive integers. Then $X\otimes Y$ is
simply-connected if and only if $n=m=1$.
\end{prop}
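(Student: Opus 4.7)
The plan is to compute the Euler characteristic $\chi(X \otimes Y)$ and compare it with what simple connectivity forces on a finite $2$-complex. An $n$-gon has $n$ vertices, $n$ edges, and one face, so from Definitions \ref{e1} and \ref{e5} one reads off $|V(X \otimes Y)| = nm$, $|E(X \otimes Y)| = 2nm$ (each pair of edges contributes two edges, one for each $\delta$), and $|F(X \otimes Y)| = 2(n,m)$ (indexed by $i \in \{0,\ldots,(n,m)-1\}$ and $\delta \in \{0,1\}$), where $(n,m)$ denotes $\gcd(n,m)$. This gives
$$\chi(X \otimes Y) \;=\; nm - 2nm + 2(n,m) \;=\; 2(n,m) - nm.$$

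For the forward direction, I would invoke the standard fact that any simply-connected finite $2$-complex $Z$ is connected and satisfies $\chi(Z) = 1 + b_2(Z) \geq 1$. Writing $n = ad$ and $m = bd$ with $d = (n,m)$ and $(a,b) = 1$, the inequality $2d - abd^2 = d(2 - abd) \geq 1$ forces $a = b = d = 1$, so $n = m = 1$. As a consistency check, when $n$ and $m$ are both even the $1$-skeleton is already disconnected, so $X \otimes Y$ is automatically not simply-connected, matching the Euler characteristic bound. For the converse, I would unwind the definitions at $n = m = 1$: the $1$-skeleton has one vertex and two loops, and there are two faces of boundary length $1$, each filling one of the loops. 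Topologically $X \otimes Y$ is then a wedge of two closed discs at their common boundary vertex, and such a space is contractible, hence simply-connected.

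The main obstacle I anticipate is reading the correct cell count from Definition \ref{e5}; one must use the explicit index range for $i$ and not confuse the number of faces (equal to $2(n,m)$) with the boundary length $[n,m]$ of each individual face, nor accidentally collapse the $\delta = 0$ and $\delta = 1$ orientations into one face when they share a boundary cycle. Once the three cell counts are correct, the rest reduces to the short arithmetic above and a one-line verification of the $n = m = 1$ base case.
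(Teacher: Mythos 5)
Your proposal is correct and follows essentially the same route as the paper: count cells to get $\chi(X\otimes Y)=nm-2nm+2(n,m)$, invoke the fact that a finite simply-connected polygonal cell complex has Euler characteristic at least $1$ (which the paper proves separately as Lemma \ref{bb}, whereas you cite it as the standard identity $\chi=1+b_2$), and verify contractibility directly in the case $n=m=1$. The only cosmetic difference is that you extract $n=m=1$ from the inequality $d(2-abd)\ge 1$ in one step, while the paper shows $\chi\le 0$ whenever $\max(n,m)\ge 2$; both are fine.
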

\begin{proof}
When $n=m=1$, the 1-skeleton of $X\otimes Y$ is a vertex with two
loops, as illustrated in Figure \ref{fige2}, and $X\otimes Y$ has
two faces attached along these two loops respectively. In this case,
$X\otimes Y$ is actually contractible, and of course
simply-connected.

Now suppose that $n$ and $m$ are not both equal to 1. Without loss
of generality, we can assume $n\ge 2$. Note that $X$ has $n$
vertices, $n$ edges, and 1 face, whereas $Y$ has $m$ vertices, $m$
edges, and 1 face. By Definition \ref{e5}, the complex $X\otimes Y$
has $nm$ vertices, $2nm$ edges, and $2(n,m)$ faces. Therefore
$X\otimes Y$ has Euler characteristic
$$\chi(X\otimes Y)=nm-2nm+2(n,m)=-nm+2(n,m)\le-2m+2m=0.$$ By the
following lemma, we know $X\otimes Y$ is not simply-connected.
\end{proof}

\begin{lem}\label{bb}
Suppose $X$ is a finite simply-connected polygonal cell complex.
Then the Euler characteristic of $X$ is at least 1.
\end{lem}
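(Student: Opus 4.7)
The plan is to extract the bound from the Betti numbers, using the fact that for a $2$-dimensional CW-complex the cellular chain complex has only three terms. Since $X$ is simply-connected, it is connected, so $H_0(X;\mathbb{Z})\cong\mathbb{Z}$ and $b_0=1$. Simply-connectedness together with the Hurewicz theorem gives $H_1(X;\mathbb{Z})\cong\pi_1(X)^{\mathrm{ab}}=0$, hence $b_1=0$.

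Next I would control $H_2$. Because $X$ is $2$-dimensional, the cellular chain complex is $0\to C_2\to C_1\to C_0\to 0$ with each $C_i$ free abelian of finite rank (finiteness of $X$ is used here). Then $H_2(X;\mathbb{Z})=\ker(C_2\to C_1)$ is a subgroup of a finitely generated free abelian group, hence itself free abelian of some finite rank $b_2\geq 0$.

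Finally, by the standard identity $\chi(X)=\sum_i(-1)^i\dim_{\mathbb{Q}}C_i\otimes\mathbb{Q}=\sum_i(-1)^ib_i$, we obtain
\[
\chi(X)=b_0-b_1+b_2=1-0+b_2\geq 1,
\]
which is the desired inequality.

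There is no serious obstacle; the only thing to be careful about is that the statement really needs $X$ to be finite (so that the Euler characteristic and the Betti numbers are defined as nonnegative integers), and that the inputs used — Hurewicz's theorem, the equality of the alternating sum of cell counts with the alternating sum of Betti numbers, and the fact that subgroups of free abelian groups are free — are all applicable to an arbitrary finite $2$-dimensional CW-complex, so the polygonal structure of $X$ plays no role beyond giving $X$ the structure of such a CW-complex.
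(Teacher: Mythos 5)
Your proof is correct. It is, however, a genuinely different route from the one in the paper. You compute $\chi(X)=b_0-b_1+b_2$ via the Euler--Poincar\'e formula, kill $b_1$ with the Hurewicz isomorphism $H_1(X;\mathbb{Z})\cong\pi_1(X)^{\mathrm{ab}}=0$, and observe $b_2=\operatorname{rank}\ker(C_2\to C_1)\ge 0$; this is the standard homological argument and is clean and short. The paper instead argues combinatorially: it contracts a spanning tree of the $1$-skeleton to reduce to a complex with one vertex, $e-v+1$ edges and $f$ faces, reads off a presentation of the trivial group $\pi_1$ with $e-v+1$ generators and $f$ relators, and then abelianizes to get a system of $f$ homogeneous integer equations in $e-v+1$ unknowns with only the trivial solution, forcing $f\ge e-v+1$, i.e.\ $\chi\ge 1$. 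The two arguments are essentially dual views of the same rank inequality (your $b_1=0$ is the paper's ``trivial abelianization''), but the paper's version avoids invoking singular/cellular homology and Hurewicz, staying entirely within elementary group presentations, whereas yours makes the role of $b_2\ge 0$ explicit and generalizes immediately to any finite $2$-dimensional CW-complex with $H_1(X;\mathbb{Q})=0$. Both correctly use only the weaker hypothesis that the abelianized fundamental group vanishes, not full simple-connectedness.
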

\begin{proof}
Suppose $X$ has $v$ vertices, $e$ edges, and $f$ faces. First we
find an arbitrary spanning tree $T$ for the 1 skeleton of $X$, and
then contract $T$ to get a new complex $X'$, which is also
simply-connected. Note that $T$ has $v-1$ edges, and therefore $X'$
has 1 vertex, $e-v+1$ edges, and $f$ faces. The fundamental group
$\pi_1(X')$, a trivial group, can be presented as a group with
$e-v+1$ generators and $f$ relators. Consider the abelianization of
$\pi_1(X')$, which is again trivial. Then the presentation can be
expressed as $f$ homogeneous equations of $e-v+1$ unknowns over
$\mathbb{Z}$. To have only trivial solution, the number of equations
needs to be at least the number of unknowns. So we have $f\ge
e-v+1$, and therefore $v-e+f\ge 1$.
\end{proof}

\begin{rem}
Let $X$ and $Y$ be two arbitrary complexes, and $C$ be a cycle along
the 1-skeleton of $X\otimes Y$. This proposition shows that the
contractibility of $\pi_{X}(C)$ and $\pi_{Y}(C)$ does not guarantee
the contractibility of $C$. Conversely, when $C$ is contractible in
$X\otimes Y$, can we conclude that $\pi_{X}(C)$ and $\pi_{Y}(C)$ are
contractible? The answer is positive. We can find a series $\{C_j\}$
of homotopic cycles of $C$ such that $C_0=C$, $C_n$ is a vertex, and
each $C_j$ morphs through a single face
$f_{\alpha,\beta}^{i^{\delta}}$ to obtain $C_{j+1}$. Note that
$\pi_X(C_j)$ can morph through a single face $f_\alpha$ to obtain
$\pi_X(C_{j+1})$, even when the length of $f_\alpha$ properly
divides the length of $f_{\alpha,\beta}^{i^{\delta}}$. Therefore
$\pi_X(C)=\pi_X(C_0)$ is homotopic to $\pi_X(C_n)$, which is a
vertex.
\end{rem}

In the above remark, we actually abuse the notation $\pi_{X}$, as we
have not yet defined projection maps for complex tensor products. To
define such projection maps, first we introduce some terminology.
Let $X$ and $Y$ be an $n$-gon and $m$-gon with centre $O_X$ and
$O_Y$ respectively. A function $\rho:X\rightarrow Y$ is {\bf radial}
if $\rho$ sends $O_X$ to $O_Y$, $\partial X$ to $\partial Y$, and
for every point $P\in\partial X$, every real number $t\in [0,1]$, we
have
$$\rho\big(t\cdot O_X+(1-t)P\big)=t\cdot O_Y+(1-t)\rho(P).$$

\begin{defn}\label{e6}
Assume the notation of Definition \ref{e5}. The {\bf projection}
from $X\otimes Y$ to $X$, denoted by $\pi_X$, is a continuous
function such that $\pi_X$ restricted to $X^1\otimes Y^1$ is exactly
$\pi_{X^1}$, the projection of the graph tensor product, and $\pi_X$
maps $f_{\alpha,\beta}^{i^\delta}\in F(X\otimes Y)$ radially to
$f_\alpha\in F(X)$. The projection $\pi_Y$ from $X\otimes Y$ to $Y$
is likewise defined.
\end{defn}

\begin{figure}
\begin{center}
\tikzstyle{place}=[circle,draw=black,inner sep=0pt,minimum size=2mm]
\begin{tikzpicture}[scale=1.4]

\node (0) at (0,0) {};

\path [fill=lightgray]
(0:1)--(60:1)--(120:1)--(180:1)--(240:1)--(300:1)--cycle;

\path [fill=lightgray]
(2.7,-.8)++(60:1.8)--(2.7,-.8)--(4.5,-.8)--cycle;

\path (0) +(0:1) node (1) {$a$} +(60:1) node (2)  {$b$} +(120:1)
node (3) {$c$} +(180:1) node (4) {$a$} +(240:1) node (5) {$b$}
+(300:1) node (6) {$c$} +(1.4,0) node (x) {} +(2.4,0) node (y) {};

\draw (1)--(2)--(3)--(4)--(5)--(6)--(1);

\draw [->] (x)-- node [above] {$\varphi$} (y);

\node (c) at (2.7,-.8) {$c$};

\path (c) +(1.8,0) node (a) {$a$} +(60:1.8) node (b) {$b$};

\draw (a)--(b)--(c)--(a);

\end{tikzpicture}
\caption{a complex homomorphism from a hexagon to a
triangle}\label{fige4}
\end{center}
\end{figure}
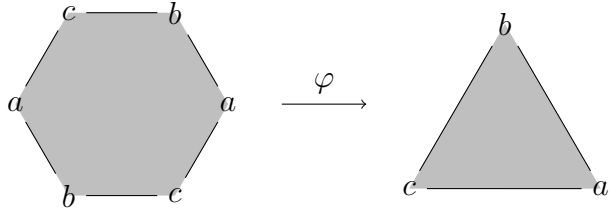

The projection maps defined above are complex homomorphisms in the
following sense.

\begin{defn} \label{e7}
Let $X$ and $Y$ be two polygonal cell complexes. A continuous
function $\varphi$ from $X$ to $Y$ is a {\bf homomorphism} if
$\varphi$ restricted to $X^1$ is a graph homomorphism to $Y^1$, and
$\varphi$ maps each face of $X$ radially to a face of $Y$ and each
open face corner (ignoring the boundary) of $X$ homeomorphically to
an open face corner of $Y$.
\end{defn}
\begin{rem}
In the above definition, the continuity of $\varphi$ is essentially
saying that a complex homomorphism maps incident cells to incident
cells. Similar to the isometric condition in graph homomorphism, the
radial condition is imposed to rule out homotopic complex
homomorphisms. Most important of all, the homeomorphic corner
condition forces a face of $X$ to wrap around a face $f$ of $Y$
along the direction of the attaching map of $f$, possibly more than
once. In particular, a face of length $n$ can only be mapped to a
face of length dividing $n$. Figure \ref{fige4} illustrates such
phenomenon, where corners are mapped to a corner with the same
label. The projection $\pi_X$ of complex tensor product mapping
$f_{\alpha,\beta}^{i^\delta}$ to $f_\alpha$ is also a typical
example.
\end{rem}

Note that the composition of two complex homomorphisms is again a
complex homomorphism. Together with the trivial automorphisms, the
class of polygonal cell complexes forms a category. The following
proposition shows that the complex tensor product defined above is
actually the categorical product of this category.

\begin{figure}
\begin{center}
\tikzstyle{place}=[circle,draw=black,inner sep=0pt,minimum size=2mm]
\begin{tikzpicture}[scale=1.4]

\node (1) at (0,0) {$X\otimes Y$};

\node (2) at (-2,0) {$X$};

\node (3) at (2,0) {$Y$};

\node (4) at (0,2) {$Z$};

\draw [->] (1) -- node [above] {$\pi_X$} (2);

\draw [->] (1) -- node [above] {$\pi_Y$} (3);

\draw [->] (4) -- node [above left] {$\varphi_X$} (2);

\draw [->] (4) -- node [above right] {$\varphi_Y$}(3);

\draw [->,dashed] (4) -- node [left] {$\exists!\psi$} (1);

\end{tikzpicture}
\caption{universal property of complex tensor product}\label{fige5}
\end{center}
\end{figure}
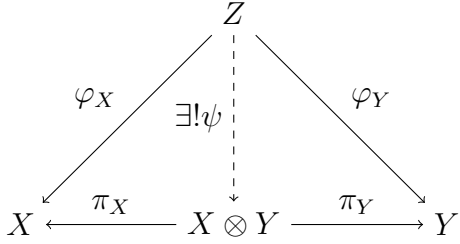

\begin{prop}\label{e7a}
Let $X$ and $Y$ be two polygonal cell complexes. Suppose that $Z$ is
a complex with two homomorphisms $\varphi_X:Z\rightarrow X$ and
$\varphi_Y:Z\rightarrow Y$. Then there exists a unique homomorphism
$\psi:Z\rightarrow X\otimes Y$ such that $\varphi_X=\pi_X\circ\psi$
and $\varphi_Y=\pi_Y\circ\psi$. In other words, there exists a
unique $\psi$ such that the diagram in Figure \ref{fige5} commutes.
\end{prop}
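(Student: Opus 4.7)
The plan is to mimic the proof of Proposition \ref{e3}, extending it from the 1-skeleton to the faces. First I would apply Proposition \ref{e3} to the 1-skeletons: restricting $\varphi_X$ and $\varphi_Y$ to $Z^1$ gives graph homomorphisms to $X^1$ and $Y^1$, so there is a unique graph homomorphism $\psi^1:Z^1\to X^1\otimes Y^1$ with $\pi_{X^1}\circ\psi^1=\varphi_X|_{Z^1}$ and $\pi_{Y^1}\circ\psi^1=\varphi_Y|_{Z^1}$. This pins down $\psi$ on $Z^1$ and disposes of both existence and uniqueness there.

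Next I would extend $\psi$ to faces. Let $f_\gamma$ be a face of $Z$ of length $n_\gamma$ attached along a cycle $C_\gamma$, and set $f_\alpha:=\varphi_X(f_\gamma)$, $f_\beta:=\varphi_Y(f_\gamma)$. By the radial, corner-homeomorphism clause of Definition \ref{e7}, both $n_\alpha$ and $n_\beta$ divide $n_\gamma$, and as parametrised cycles $\varphi_X(C_\gamma)=\tfrac{n_\gamma}{n_\alpha}C_\alpha$ and $\varphi_Y(C_\gamma)=\tfrac{n_\gamma}{n_\beta}C_\beta$. In particular $[n_\alpha,n_\beta]$ divides $n_\gamma$. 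Applying Proposition \ref{e3} to $C_\gamma\subset Z^1$ (equivalently using Corollary \ref{e4} to lift one full traversal), the cycle $\psi^1(C_\gamma)$ in $X^1\otimes Y^1$ is uniquely determined by its two projections. Comparing with the definition of $(C_\alpha,C_\beta)_\otimes^{i^\delta}$ in Definition \ref{e5}, there is a unique pair $(i,\delta)$ with $0\le i<(n_\alpha,n_\beta)$ such that
$$\psi^1(C_\gamma)\;=\;\tfrac{n_\gamma}{[n_\alpha,n_\beta]}\,(C_\alpha,C_\beta)_\otimes^{i^\delta},$$
and consequently a unique face $f_{\alpha,\beta}^{i^\delta}$ of $X\otimes Y$ around which $f_\gamma$ must wrap. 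Setting $\psi(f_\gamma)=f_{\alpha,\beta}^{i^\delta}$ and extending $\psi$ radially across $f_\gamma$, the radial condition together with the already fixed behaviour on $\partial f_\gamma$ uniquely determines $\psi$ on $f_\gamma$.

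Finally I would check that the resulting $\psi$ is a complex homomorphism and that the diagram in Figure \ref{fige5} commutes. The graph-homomorphism clause on $Z^1$ is inherited from $\psi^1$, and by construction $\psi$ maps each face of $Z$ radially to a face of $X\otimes Y$. An open corner of $f_\gamma$ is mapped by $\psi$ into $f_{\alpha,\beta}^{i^\delta}$ so that its two projections are the (homeomorphic) images of the corner under $\varphi_X$ and $\varphi_Y$; since $\pi_X,\pi_Y$ separate corners of $f_{\alpha,\beta}^{i^\delta}$ on each sheet, $\psi$ is a homeomorphism on open corners. Commutativity on faces reduces to verifying that $\pi_X\circ\psi$ and $\varphi_X$ (and likewise for $Y$) are radial maps with the same barycentre image and the same boundary behaviour, which holds by construction.

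The main obstacle I anticipate is the bookkeeping at the second step: identifying $\psi^1(C_\gamma)$ as the correct repetition of $(C_\alpha,C_\beta)_\otimes^{i^\delta}$, with the starting index $i$ and orientation $\delta$ pinned down by the chosen basepoint and orientation of $C_\gamma$, and then confirming that distinct choices among the faces $f_{\alpha,\beta}^{i^\delta}$ listed in Definition \ref{e5} cannot produce the same wrapped boundary with a genuinely different radial target. Once this identification is secured, extending radially and verifying the homomorphism axioms are formal, and uniqueness on faces follows from the rigidity of radial maps on cells combined with the uniqueness already established on $Z^1$.
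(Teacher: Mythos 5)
Your overall strategy (reduce the 1-skeleton to Proposition \ref{e3}, then pin down the target face of each face of $Z$ and extend radially) matches the paper's, but the step where you determine the target face has a genuine gap. You propose to identify $\psi(f_\gamma)$ from the parametrised boundary walk $\psi^1(C_\gamma)$ alone, claiming a unique pair $(i,\delta)$ satisfies $\psi^1(C_\gamma)=\tfrac{n_\gamma}{[n_\alpha,n_\beta]}(C_\alpha,C_\beta)_\otimes^{i^\delta}$. That uniqueness fails precisely in the situation flagged in the remark after Definition \ref{e5}: when $C_\alpha$ or $C_\beta$ wraps a cycle several times, distinct faces $f_{\alpha,\beta}^{i^\delta}$ can have literally identical attaching walks. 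In the paper's own example ($X$ and $Y$ being $15$-gons wrapped around cycles of length $3$ and $5$), the walks $(C_\alpha,C_\beta)_\otimes^{0^0}$, $(C_\alpha,C_\beta)_\otimes^{5^0}$ and $(C_\alpha,C_\beta)_\otimes^{10^0}$ coincide as parametrised walks (since $C_\beta^{5^0}=C_\beta$), yet they index three distinct faces of $X\otimes Y$. So the boundary walk, even with a basepoint and orientation, cannot tell these faces apart, and the ``confirmation'' you defer to in your last paragraph cannot be carried out as stated.

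What does distinguish these faces is corner-level data: a corner of $f_{\alpha,\beta}^{i^\delta}$ projects to a specific corner of $f_\alpha$ and a specific corner of $f_\beta$ (not merely to a vertex of the $1$-skeleton), and by the remark after Definition \ref{e5} each oriented pair of corners of $f_\alpha$ and $f_\beta$ occurs in exactly one $f_{\alpha,\beta}^{i^\delta}$. Since $\pi_X\circ\psi=\varphi_X$ and $\pi_Y\circ\psi=\varphi_Y$ must hold on the open face interiors, for a corner $c$ of $f_\gamma$ the image $\psi(c)$ must be the unique corner of $X\otimes Y$ whose pair of projections is $(\varphi_X(c),\varphi_Y(c))$; this is what uniquely determines $i$ and $\delta$, and it is exactly how the paper's proof proceeds. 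Your argument becomes correct once you replace ``match the boundary walk in $X^1\otimes Y^1$'' by ``match the image of one corner of $f_\gamma$ as an oriented pair of corners of $f_\alpha$ and $f_\beta$''; the rest of your outline (radial extension, verification of the homomorphism axioms, uniqueness on $Z^1$ via Proposition \ref{e3}) is fine.
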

\begin{proof}
Assume that there exists a continuous function $\psi:Z\rightarrow
X\otimes Y$ such that $\varphi_X=\pi_X\circ\psi$ and
$\varphi_Y=\pi_Y\circ\psi$. Note that $\varphi_X$, $\varphi_Y$,
$\pi_X$, and $\pi_Y$ restricted to the 1-skeletons of their domains
are all graph homomorphisms. By Proposition \ref{e3}, the
restriction of $\psi$ to $Z^1$ is a uniquely determined graph
homomorphism to $X^1\otimes Y^1$.

Suppose that $f$ is a face in $Z$, $\varphi_X(f)$ wraps around a
face $f_\alpha$ in $X$, and $\varphi_Y(f)$ wraps around a face
$f_\beta$ in $X$. Then $\varphi_X(f)=\pi_X\circ\psi(f)$ wraps around
$f_\alpha$, and $\varphi_Y(f)=\pi_Y\circ\psi(f)$ wraps around
$f_\beta$. By Definition \ref{e6}, $\psi(f)$ must wrap around
$f_{\alpha,\beta}^{i^\delta}$ for some $i$ and $\delta$. Let $c$ be
a corner of $f$. Then we must have $\varphi_X(c)=\pi_X\circ\psi(c)$
and $\varphi_Y(c)=\pi_Y\circ\psi(c)$. By the remark after Definition
\ref{e5}, this pair of corners $(\varphi_X(c),\varphi_Y(c))$,
orientation included, appears in exactly one
$f_{\alpha,\beta}^{i^\delta}$. Therefore $i$ and $\delta$ are
uniquely determined, and $\psi(f)$ wraps around this
$f_{\alpha,\beta}^{i^\delta}$. Moreover, the radiality of
$\varphi_X$ and $\pi_X$ forces $\psi$ to map $f$ radially to
$f_{\alpha,\beta}^{i^\delta}$. Note that we have explicitly
constructed a continuous $\psi$ satisfying our initial assumption.
We have also shown that $\psi$ is uniquely determined, and actually
a complex homomorphism, which finishes the proof.
\end{proof}
\begin{rem}
For any two complexes $X$ and $Y$, we denote the set of all complex
homomorphisms from $X$ to $Y$ by $\Hom(X,Y)$. Similarly to Corollary
\ref{e3a}, we have $$|\Hom(Z,X\otimes
Y)\,|=|\Hom(Z,X)\,|\cdot|\Hom(Z,Y)\,|.$$
\end{rem}

As we mentioned earlier, for any graph $\Gamma$, there is a
homomorphism from $\Gamma$ to a loop. It is reasonable to ask the
following question: for any complex $X$, is there always a
homomorphism from $X$ to a 1-gon? The answer is negative. Take
Figure \ref{fige6} as an example. Once the image of the leftmost
edge is determined, it determines the image of all other edges. If
we identify the leftmost and the rightmost edges with a twist, i.e.
making it a Mobius strip, then there is no way to have a
homomorphism. Note that this question is not related to
orientability. If the complex is a strip with 3 squares, then the
Mobius case has a homomorphism, while the orientable case does not.

\begin{figure}
\begin{center}
\tikzstyle{place}=[circle,fill=white,draw=black,inner
sep=0pt,minimum size=2mm]
\begin{tikzpicture}[scale=1.8]

\path [fill=lightgray] (0,0)--(2.3,0)--(2.3,1.15)--(0,1.15)--cycle;

\node (x) at (5,0) [place] {};

\draw [<<-] [fill=lightgray] (x) to [min distance=23mm, out=45,
in=135] (x);

\node at (-1,0) {};

\node at (5,0) [place] {};

\node (1) at (0,0) [place] {};

\node (4) at (0,1.15) [place] {};

\path (1) ++(1.15,0) node (2) [place] {} ++(1.15,0) node (3) [place]
{};

\path (4) ++(1.15,0) node (5) [place] {} ++(1.15,0) node (6) [place]
{};

\draw [->>] (1)--(4);

\draw [->>] (4)--(5);

\draw [->>] (5)--(2);

\draw [->>] (2)--(1);

\draw [->>] (6)--(5);

\draw [->>] (3)--(6);

\draw [->>] (2)--(3);

\draw [->] (3.1,.57)--(3.8,.57);

\end{tikzpicture}
\caption{a homomorphism to a 1-gon}\label{fige6}
\end{center}
\end{figure}
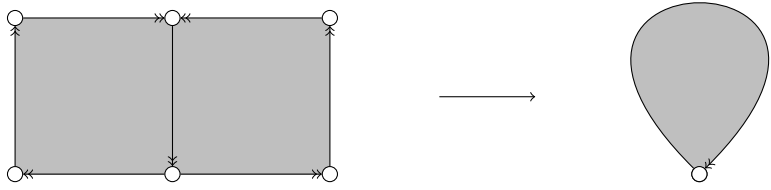

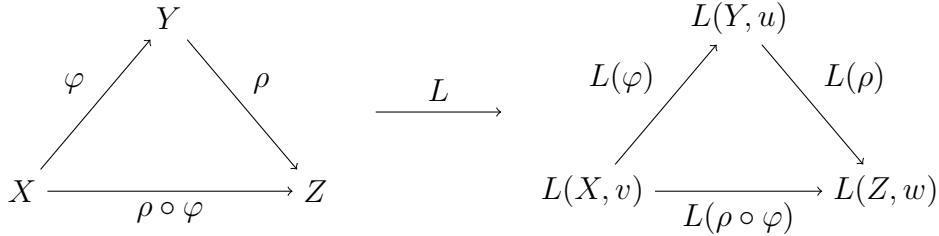
\begin{figure}
\begin{center}
\tikzstyle{place}=[circle,fill=white,draw=black,inner
sep=0pt,minimum size=2mm]
\begin{tikzpicture}[scale=1.5]

\node (x) at (0,0) {$X$};

\node (a) at (5,0) {$L(X,v)$};

\path (x) ++(50:2) node (y) {$Y$} ++(-50:2) node (z) {$Z$};

\draw [->] (x) -- node [above left] {$\varphi$}(y);

\draw [->] (y)-- node [above right] {$\rho$}(z);

\draw [->] (x) -- node [below] {$\rho\circ\varphi$}(z);

\path (a) ++(50:2) node (b) {$L(Y,u)$} ++(-50:2) node (c)
{$L(Z,w)$};

\draw [->] (a) -- node [above left] {$L(\varphi)$}(b);

\draw [->] (b)-- node [above right] {$L(\rho)$}(c);

\draw [->] (a) -- node [below] {$L(\rho\circ\varphi)$}(c);

\draw [->] (3.1,.7)--node[above]{$L$}(4.2,.7);

\end{tikzpicture}
\caption{functoriality of $L$}\label{fige7}
\end{center}
\end{figure}

\begin{prop}
Let $X$ and $Y$ be two polygonal cell complexes, and
$\varphi:X\rightarrow Y$ be a complex homomorphism mapping a vertex
$v\in V(X)$ to $u\in V(Y)$. Then $\varphi$ induces a graph
homomorphism $L(\varphi)$ from $L(X,v)$ to $L(Y,u)$. Moreover, let
$Z$ be another complex and $\rho:Y\rightarrow Z$ be a complex
homomorphism mapping $u$ to $w\in V(Z)$. Then we have
$L(\rho\circ\varphi)=L(\rho)\circ L(\varphi)$, as illustrated in
Figure \ref{fige7}.
\end{prop}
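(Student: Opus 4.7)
The plan is to work directly with the combinatorial description of the link given in Section~1: vertices of $L(X,v)$ correspond to ends of edges of $X$ attached to $v$, and edges of $L(X,v)$ correspond to corners of faces of $X$ based at $v$. I will define $L(\varphi)$ separately on link-vertices and link-edges, using the two compatible structures packaged into Definition~\ref{e7} (graph homomorphism on the 1-skeleton, plus radial/homeomorphic corner behaviour on faces), and then verify the graph-homomorphism axioms and functoriality.

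First, on link-vertices: let $\tilde{e}$ be an end at $v$ of an edge $e\subset X^{1}$. Since $\varphi|_{X^{1}}$ is a graph homomorphism, it sends $e$ isometrically onto an edge $\varphi(e)\subset Y^{1}$ incident to $u$, and $\tilde{e}$ determines a unique end of $\varphi(e)$ at $u$; declare this to be $L(\varphi)(\tilde{e})$. Second, on link-edges: let $c$ be a corner of a face $f$ based at $v$, with bordering edge-ends $\tilde{e}_{1},\tilde{e}_{2}$. By Definition~\ref{e7}, $\varphi$ sends $f$ radially onto a face $f'=\varphi(f)$ and sends the open corner $c$ homeomorphically onto an open corner $c'$ of $f'$ at $u$; declare $L(\varphi)$ to send the corresponding edge of $L(X,v)$ to the edge of $L(Y,u)$ indexed by $c'$, parametrized (using the standard convention that every link-edge has unit length, realized by a radial $\delta$-arc inside the corner) so that the restriction to the open edge is an isometry.

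The key verification is that endpoints are sent to endpoints, i.e.\ $L(\varphi)(\tilde{e}_{i})$ is indeed the link-vertex bordering $c'$ for $i=1,2$. This is where I expect the main (though mild) obstacle to lie, and it is exactly what the radiality plus homeomorphic-corner conditions in Definition~\ref{e7} were designed to enforce: the closure of the corner $c$ meets $v$ along the two half-edges $\tilde{e}_{1},\tilde{e}_{2}$, and $\varphi$ is continuous on $\overline{c}$, so the closure of $c'=\varphi(c)$ meets $u$ precisely along $\varphi(\tilde{e}_{1})$ and $\varphi(\tilde{e}_{2})$, which by construction are $L(\varphi)(\tilde{e}_{1})$ and $L(\varphi)(\tilde{e}_{2})$. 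Continuity of $L(\varphi)$ (incidence of link-cells) then follows from the corresponding incidence in $X$ and $Y$.

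Finally, the functoriality $L(\rho\circ\varphi)=L(\rho)\circ L(\varphi)$ reduces to unwinding the two-part definition: on a link-vertex $\tilde{e}$, both sides return the end of $(\rho\circ\varphi)(e)$ selected by $\tilde{e}$, using associativity of composition on $X^{1}$; on a link-edge indexed by a corner $c$, both sides return the link-edge indexed by $\rho(\varphi(c))=(\rho\circ\varphi)(c)$, and the parametrizations agree because the composition of two isometries between unit link-edges is again the prescribed isometry. Thus the diagram in Figure~\ref{fige7} commutes.
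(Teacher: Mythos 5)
Your proposal is correct and follows essentially the same route as the paper: define $L(\varphi)$ on link-vertices via the graph-homomorphism restriction to $X^1$ and on link-edges via the homeomorphic-corner condition of Definition~\ref{e7}, check that endpoints go to endpoints, and observe that functoriality is immediate from the construction. The only difference is that you spell out the isometric parametrization of link-edges and the endpoint verification in more detail than the paper does.
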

\begin{proof}
By definition, $L(X,v)$ has vertices corresponding to edge ends
around $v$ in $X$, and edges corresponding to face corners at $v$ in
$X$. Since $\varphi$ restricted to $X^1$ is a graph homomorphism,
$\varphi$ maps an edge end around $v$ in $X$ to an edge end around
$u$ in $Y$. In addition, by the homeomorphic condition in Definition
\ref{e7}, $\varphi$ maps a face corner at $v$ joining two edge ends
around $v$ homeomorphically to a face corner at $u$ joining two edge
ends around $u$. Therefore $\varphi$ induces a graph homomorphism
$L(\varphi)$ from $L(X,v)$ to $L(Y,u)$. Once these induced graph
homomorphisms between link graphs are defined, the equality
$L(\rho\circ\varphi)=L(\rho)\circ L(\varphi)$ follows immediately.
\end{proof}
\begin{rem} To each polygonal cell complex, we can assign a
distinguished vertex to be the basepoint. Together with
basepoint-preserving homomorphisms, the class of pointed polygonal
cell complexes also forms a category. The above proposition is
essentially saying that $L$ is a functor from this category to the
category of graphs.
\end{rem}

Now we move back to the main purpose of this chapter: to develop a
complex product interacting nicely with some product of link graphs.
From the above discussion, we know that the complex tensor product
arises naturally in the category of polygonal cell complexes. Does
this natural categorical product fulfill the main job? Yes, it does.

\begin{figure}
\begin{center}
\tikzstyle{place}=[circle,fill=white,draw=black,inner
sep=0pt,minimum size=2mm]
\begin{tikzpicture}[scale=1.9]

\path [fill=lightgray] (0,0)--+(68:.9)--+(112:.9)--cycle;

\path [fill=lightgray] (2.5,0)--+(68:.9)--+(112:.9)--cycle;

\path [fill=lightgray] (5.6,.6)--+(55:.7)--+(125:.7)--cycle;

\path [fill=lightgray] (5.6,.6)--+(-55:.7)--+(-124:.7)--cycle;

\node (v) at (0,0) [place] {};

\path (v) node [below] {$v$};

\draw (v)--node [right] {$e_{\alpha_2}$} +(68:1.5) (v)--node [left]
{$e_{\alpha_1}$} +(180-68:1.5);

\node (u) at (2.5,0) [place] {};

\path (u) node [below] {$u$};

\draw (u)--node [right] {$e_{\beta_2}$} +(68:1.5) (u)--node [left]
{$e_{\beta_1}$} +(180-68:1.5);

\node at (1.25,.6) {$\otimes$};

\node at (3.75,.6) {$=$};

\node (x) at (5.6,.6) [place] {};

\path (x) node [right] {$(v,u)$};

\draw (x)--node [below right] {$(e_{\alpha_2},e_{\beta_2})$}
+(55:1.1) (x)-- node [below left] {$(e_{\alpha_1},e_{\beta_1})$}
+(125:1.1) (x)-- node [above left]
{$(e_{\alpha_1},e_{\beta_2})$}+(235:1.1) (x)--node [above right]
{$(e_{\alpha_2},e_{\beta_1})$}+(305:1.1);

\end{tikzpicture}
\caption{well linked tensor product}\label{fige8}
\end{center}
\end{figure}
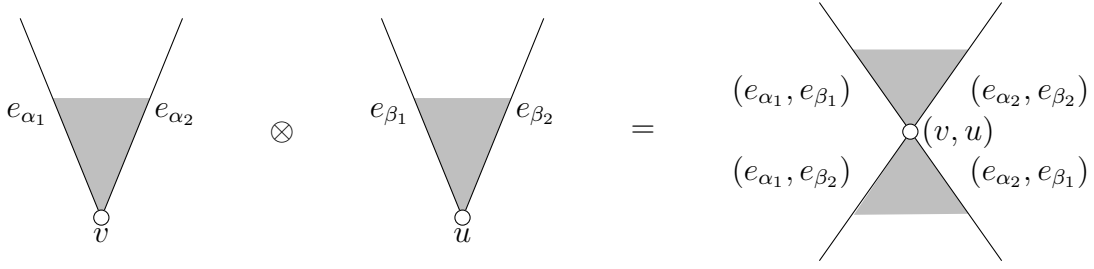

\begin{thm}\label{e8}
Suppose that $X$ and $Y$ are two polygonal cell complexes, and $v$
and $u$ are two vertices in $X$ and $Y$ respectively. Then we have
$$L(X,v)\otimes L(Y,u)\cong L(X\otimes Y,(v,u)).$$
\end{thm}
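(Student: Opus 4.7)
The plan is to construct an explicit graph isomorphism
\[
\Phi \colon L(X,v) \otimes L(Y,u) \to L(X \otimes Y, (v,u)).
\]
One could invoke Proposition \ref{e3} with $L(\pi_X)$ and $L(\pi_Y)$ to obtain a homomorphism in one direction, but bijectivity would still need a direct check, so I prefer to build $\Phi$ by hand on vertices and on edges and then verify adjacency.

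First I would establish the vertex bijection. A vertex of the left-hand side is a pair $(\epsilon_X, \epsilon_Y)$ where $\epsilon_X$ is an edge-end of $X$ at $v$ and $\epsilon_Y$ is an edge-end of $Y$ at $u$, while a vertex of the right-hand side is an edge-end of $X \otimes Y$ at $(v,u)$. Keeping track of the $\pm$-labels on ends of loops as introduced after Definition \ref{e1}, one checks that for each pair $(\epsilon_X, \epsilon_Y)$ coming from edges $e_\alpha \in E(X)$ and $e_\beta \in E(Y)$, exactly one of $e_{\alpha,\beta}^0, e_{\alpha,\beta}^1$ has an edge-end at $(v,u)$ whose $\pi_X$-projection is $\epsilon_X$ and whose $\pi_Y$-projection is $\epsilon_Y$. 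This gives the vertex bijection.

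Next I would match edges. An edge of the left-hand side is a triple $(c_X, c_Y, \eta)$, with $c_X$ a corner of some $f_\alpha$ at $v$, $c_Y$ a corner of some $f_\beta$ at $u$, and $\eta \in \{0,1\}$; an edge of the right-hand side is a corner of some $f_{\alpha,\beta}^{i^\delta}$ at $(v,u)$. Writing $c_X$ at position $k$ of $C_\alpha$ and $c_Y$ at position $\ell$ of $C_\beta$, a direct computation with the definition of $(C_\alpha, C_\beta)_\otimes^{i^\delta}$ shows that this cycle has a corner at $(v,u)$ coming from the pair $(c_X, c_Y)$ precisely when $i \equiv \ell - k$ (for $\delta = 0$) or $i \equiv k + \ell$ (for $\delta = 1$) modulo $(n_\alpha, n_\beta)$. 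Each congruence has a unique solution in $\{0, 1, \ldots, (n_\alpha, n_\beta) - 1\}$, so every pair $(c_X, c_Y)$ yields exactly two face corners at $(v,u)$ in $X \otimes Y$, one for each $\delta$; I would send $\eta$ to the matching $\delta$.

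Finally I would verify that $\Phi$ preserves adjacency. At a $\delta = 0$ corner, the cycle $(C_\alpha, C_\beta)_\otimes^{i^0}$ lifts $C_\alpha$ and $C_\beta$ in the same direction, so its incoming (resp. outgoing) edge at $(v,u)$ projects to the incoming (resp. outgoing) edge at $c_X$ in $X$ and at $c_Y$ in $Y$; the vertex bijection then identifies the endpoints of this corner with $(\epsilon_X^{\mathrm{in}}, \epsilon_Y^{\mathrm{in}})$ and $(\epsilon_X^{\mathrm{out}}, \epsilon_Y^{\mathrm{out}})$, matching the endpoints of $e_{c_X, c_Y}^0$. For $\delta = 1$ the $Y$-direction is reversed, giving endpoints $(\epsilon_X^{\mathrm{in}}, \epsilon_Y^{\mathrm{out}})$ and $(\epsilon_X^{\mathrm{out}}, \epsilon_Y^{\mathrm{in}})$, matching $e_{c_X, c_Y}^1$. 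I expect the main obstacle to be purely combinatorial bookkeeping: consistently tracking the $\pm$-labels on loops and the $\delta$-superscript across all three constructions (the tensor product of graphs, the cycle $(C_\alpha, C_\beta)_\otimes^{i^\delta}$, and the tensor product of link graphs), so that each projection lines up correctly. No conceptual difficulty arises once this is done.
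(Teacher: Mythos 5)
Your proposal is correct and follows essentially the same route as the paper: a vertex bijection between edge-end pairs at $(v,u)$ and pairs of edge-ends at $v$ and $u$ (the paper gets this from Corollary \ref{e4} via 1-paths, you by tracking loop-end labels directly), followed by the observation that each pair of corners $(c_X,c_Y)$ appears in exactly one face $f_{\alpha,\beta}^{i^0}$ and one $f_{\alpha,\beta}^{j^1}$, yielding the two edges required by Definition \ref{e1}. The only difference is that you compute the congruences $i\equiv \ell-k$ and $i\equiv k+\ell \pmod{(n_\alpha,n_\beta)}$ explicitly where the paper simply cites the remark after Definition \ref{e5}; your computation is correct and the two arguments are otherwise identical.
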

\begin{proof} We can identify edge ends incident to a vertex as
paths of length 1 leaving the vertex, since a loop contributes to
two edge ends as well as two such paths, which we call 1-paths for
short. By Corollary \ref{e4}, there is a bijection between 1-paths
leaving $(v,u)$ in $X\otimes Y$ and pairs of 1-path leaving $v$ in
$X$ and 1-path leaving $u$ in $Y$. Therefore we can index 1-paths
leaving $(v,u)$ in $X\otimes Y$ by such 1-path pairs in $X$ and $Y$.

Suppose that $f_\alpha\in F(X)$ has a corner $c_\alpha$ at
$(e_{\alpha_1},v,e_{\alpha_2})$, and $f_\beta\in F(Y)$ has a corner
$c_\beta$ at $(e_{\beta_1},u,e_{\beta_2})$, as illustrated in Figure
\ref{fige8}. These $e_*$'s should be understood as 1-paths. By the
remark after Definition \ref{e5}, the pairing of these two corners
appears exactly once in $f_{\alpha,\beta}^{i^0}$ and
$f_{\alpha,\beta}^{j^1}$ respectively, forming corners
$((e_{\alpha_1},e_{\beta_1}),(v,u),(e_{\alpha_2},e_{\beta_2}))$ and
$((e_{\alpha_1},e_{\beta_2}),(v,u),(e_{\alpha_2},e_{\beta_1}))$ in
$X\otimes Y$. Note that by taking projection maps, we know that any
face corner at $(v,u)$ comes from some pairing of corners at $v$ and
$u$.

Now we translate the above statements in terms of corresponding link
graphs. First of all, we have $V\big(L(X,v)\big)\times
V\big(L(Y,u)\big)\cong V\big(L(X\otimes Y,(v,u))\big)$. Secondly,
the corner $c_\alpha$ is an edge joining vertices $e_{\alpha_1}$ and
$e_{\alpha_2}$ in $L(X,v)$, and $c_\beta$ is an edge joining
vertices $e_{\beta_1}$ and $e_{\beta_2}$ in $L(Y,u)$. Notice that
the edge pair $(c_\alpha,c_\beta)$ contributes to one edge joining
$(e_{\alpha_1},e_{\beta_1})$ and $(e_{\alpha_2},e_{\beta_2})$, and
one edge joining $(e_{\alpha_1},e_{\beta_2})$ and
$(e_{\alpha_2},e_{\beta_1})$ in $L(X\otimes Y,(v,u))$. Meanwhile,
taking all possible pairings of edges exhausts all edges in
$L(X\otimes Y,(v,u))$. By Definition \ref{e1}, this is exactly
saying that $L(X,v)\otimes L(Y,u)\cong L(X\otimes Y,(v,u))$.
\end{proof}

\begin{rem}
In the terminology of category theory, this theorem is essentially
saying that the functor $L$ from the category of pointed complexes
to the category of graphs preserves categorical products, which is
not always true for an arbitrary functor.
\end{rem}

As indicated in Propositions \ref{e5a} and \ref{e5b}, the complex
tensor product does not necessarily preserve polygonality and simple
connectedness. Fortunately, complex tensor product does preserve the
most important property for our purpose.

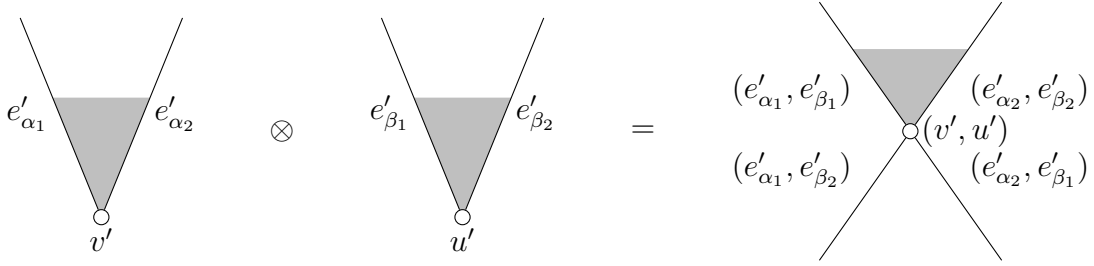
\begin{figure}
\begin{center}
\tikzstyle{place}=[circle,fill=white,draw=black,inner
sep=0pt,minimum size=2mm]
\begin{tikzpicture}[scale=1.9]

\path [fill=lightgray] (0,0)--+(68:.9)--+(112:.9)--cycle;

\path [fill=lightgray] (2.5,0)--+(68:.9)--+(112:.9)--cycle;

\path [fill=lightgray] (5.6,.6)--+(55:.7)--+(125:.7)--cycle;

\node (v) at (0,0) [place] {};

\path (v) node [below] {$v'$};

\draw (v)--node [right] {$e_{\alpha_2}'$} +(68:1.5) (v)--node [left]
{$e_{\alpha_1}'$} +(180-68:1.5);

\node (u) at (2.5,0) [place] {};

\path (u) node [below] {$u'$};

\draw (u)--node [right] {$e_{\beta_2}'$} +(68:1.5) (u)--node [left]
{$e_{\beta_1}'$} +(180-68:1.5);

\node at (1.25,.6) {$\otimes$};

\node at (3.75,.6) {$=$};

\node (x) at (5.6,.6) [place] {};

\path (x) node [right] {$(v',u')$};

\draw (x)--node [below right] {$(e_{\alpha_2}',e_{\beta_2}')$}
+(55:1.1) (x)-- node [below left] {$(e_{\alpha_1}',e_{\beta_1}')$}
+(125:1.1) (x)-- node [above left]
{$(e_{\alpha_1}',e_{\beta_2}')$}+(235:1.1) (x)--node [above right]
{$(e_{\alpha_2}',e_{\beta_1}')$}+(305:1.1);

\end{tikzpicture}
\caption{automorphic image of Figure \ref{fige8}}\label{fige9}
\end{center}
\end{figure}

\begin{thm}\label{e9}
Let $X$ and $Y$ be any two flag-transitive polygonal cell complexes.
Then the complex tensor product $X\otimes Y$ is flag-transitive.
\end{thm}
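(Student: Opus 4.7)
The plan is to exploit two structural results proved already: the link equation $L(X\otimes Y,(v,u))\cong L(X,v)\otimes L(Y,u)$ from Theorem \ref{e8}, and the universal property of complex tensor product from Proposition \ref{e7a}. The idea is that flags in $X\otimes Y$ decompose into pairs of flags in $X$ and $Y$, and an automorphism of each factor lifts to an automorphism of the product that acts pair-wise on flags. Then flag-transitivity of $X\otimes Y$ reduces to flag-transitivity of each factor.

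I first want to establish a bijection between flags of $X\otimes Y$ at $(v,u)$ and pairs (flag of $X$ at $v$, flag of $Y$ at $u$). A flag at $(v,u)$ is a face corner together with one of its two adjacent edge-ends, which is the data of an edge of $L(X\otimes Y,(v,u))$ with a chosen endpoint. Using Theorem \ref{e8} and the explicit description of edges in a graph tensor product (Definition \ref{e1}), an edge in $L(X,v)\otimes L(Y,u)$ together with a chosen endpoint corresponds to a pair of edges, one in each factor, each with a chosen endpoint; the parameter $\delta\in\{0,1\}$ is pinned down by which endpoint is chosen. This is exactly a pair of flags, one in $X$ at $v$ and one in $Y$ at $u$.

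Next, given automorphisms $\phi\in\Aut(X)$ and $\psi\in\Aut(Y)$, I apply Proposition \ref{e7a} to the pair $\phi\circ\pi_X\colon X\otimes Y\to X$ and $\psi\circ\pi_Y\colon X\otimes Y\to Y$ to obtain a unique complex homomorphism $\phi\otimes\psi\colon X\otimes Y\to X\otimes Y$ satisfying $\pi_X\circ(\phi\otimes\psi)=\phi\circ\pi_X$ and $\pi_Y\circ(\phi\otimes\psi)=\psi\circ\pi_Y$. Applying the same construction to $\phi^{-1}$ and $\psi^{-1}$ and invoking uniqueness against the identity, $\phi\otimes\psi$ is an automorphism. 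By the projection identities, $\phi\otimes\psi$ sends the flag at $(v,u)$ associated to the pair $(F^X,F^Y)$ to the flag at $(\phi(v),\psi(u))$ associated to $(\phi(F^X),\psi(F^Y))$, because after projection the two components agree and the decomposition in Step 1 is determined by its projections. Finally, given any two flags $\Phi_1,\Phi_2$ of $X\otimes Y$, decompose them into pairs $(F_1^X,F_1^Y)$ and $(F_2^X,F_2^Y)$, use flag-transitivity of $X$ and $Y$ to pick $\phi$ sending $F_1^X$ to $F_2^X$ and $\psi$ sending $F_1^Y$ to $F_2^Y$, and conclude that $\phi\otimes\psi$ carries $\Phi_1$ to $\Phi_2$.

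The main obstacle is Step 1, the flag-pair bijection, which requires honest bookkeeping of the $\delta$ parameter and of loop ends, so that the combinatorics of corners and edge-ends at a vertex of $X\otimes Y$ really matches pairs of the same data at $v$ and $u$; once this is cleanly set up, Theorem \ref{e8} and Proposition \ref{e7a} do essentially all the remaining work, and the proof becomes almost formal.
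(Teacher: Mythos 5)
Your proof is correct and follows essentially the same route as the paper: the paper also decomposes oriented face corners (equivalently, flags) of $X\otimes Y$ at $(v,u)$ into pairs of oriented corners at $v$ and $u$, lifts a pair $(\rho,\sigma)\in\Aut(X)\times\Aut(Y)$ to an automorphism of the product, and invokes flag-transitivity of the factors. The only cosmetic differences are that you justify the lifted automorphism via the universal property of Proposition \ref{e7a} and organize the corner bookkeeping through Theorem \ref{e8}, where the paper verifies both by direct inspection of the corner pairings; you may also want to note, as the paper does in one line, the degenerate case where a factor has no faces.
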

\begin{proof}
In case $X$ or $Y$ has no faces, then $X\otimes Y$ is simply a
graph, and the flag-transitivity follows easily from the definition
of graph tensor product. Hereafter we assume that both $X$ and $Y$
have at least one face.

Let $((e_{\alpha_1},e_{\beta_1}),(v,u),(e_{\alpha_2},e_{\beta_2}))$
be a face corner in $X\otimes Y$, which projects to a corner
$(e_{\alpha_1},v,e_{\alpha_2})$ in $X$ and a corner
$(e_{\beta_1},u,e_{\beta_2})$ in $Y$, as illustrated in Figure
\ref{fige8}. Let
$((e_{\alpha_1}',e_{\beta_1}'),(v',u'),(e_{\alpha_2}',e_{\beta_2}'))$
be another face corner in $X\otimes Y$, which projects to a corner
$(e_{\alpha_1}',v',e_{\alpha_2}')$ in $X$ and a corner
$(e_{\beta_1}',u',e_{\beta_2}')$ in $Y$, as illustrated in Figure
\ref{fige9}. Since $X$ and $Y$ are flag-transitive, there exist
$\rho\in \Aut(X)$ mapping $(e_{\alpha_1},v,e_{\alpha_2})$ to
$(e_{\alpha_1}',v',e_{\alpha_2}')$ and $\sigma\in\Aut(Y)$ mapping
$(e_{\beta_1},u,e_{\beta_2})$ to $(e_{\beta_1}',u',e_{\beta_2}')$.
Comparing Figures \ref{fige8} and \ref{fige9}, note that
$(\rho,\sigma)$ gives an automorphism of $X\otimes Y$ mapping
$((e_{\alpha_1},e_{\beta_1}),(v,u),(e_{\alpha_2},e_{\beta_2}))$ to
$((e_{\alpha_1}',e_{\beta_1}'),(v',u'),(e_{\alpha_2}',e_{\beta_2}'))$.
The above discussion shows that $\Aut(X\otimes Y)$ acts transitively
on face corners with orientations, and therefore transitively on
half-corners. In other words, $\Aut(X\otimes Y)$ acts transitively
on flags.
\end{proof}
\begin{rem}
In Figure \ref{fige8}, flipping both corners in $X$ and $Y$ will
flip both corners in $X\otimes Y$, whereas flipping only one corner
in either $X$ or $Y$ will swap two corners in $X\otimes Y$.
\end{rem}

\section{Factorization and Symmetry}

In the proof of Theorem \ref{e9}, the key fact we used is the
following relation:
$$\Aut(X)\times\Aut(Y)\le \Aut(X\otimes Y).$$ Is it possible that these two
groups are actually isomorphic? When $X$ an $Y$ are isomorphic, we
can swap $X$ and $Y$ to obtain an extra automorphism, since the
complex tensor product is commutative up to isomorphism. In addition
to swapping, the following proposition gives more extra
automorphisms in a less obvious way.

\vspace{-.7mm}\begin{prop}\label{g1} Let $X$, $Y$, and $Z$ be
polygonal cell complexes. Then we have \vspace{-.7mm}$$(X\otimes
Y)\otimes Z\cong X\otimes(Y\otimes Z).\vspace{-.7mm}$$ In other
words, complex tensor product is associative up to isomorphism.
\end{prop}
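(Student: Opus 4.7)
The plan is to deduce associativity from the universal property of the complex tensor product (Proposition \ref{e7a}), bypassing any explicit cell-by-cell construction. The underlying principle is categorical: binary products in any category are automatically associative up to canonical isomorphism, and Proposition \ref{e7a} has already established that $\otimes$ is the categorical product in the category of polygonal cell complexes with complex homomorphisms.

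First I would iterate Proposition \ref{e7a} to extract a three-fold universal property for $X\otimes(Y\otimes Z)$: given any complex $W$ with homomorphisms $\varphi_X:W\to X$, $\varphi_Y:W\to Y$, $\varphi_Z:W\to Z$, a first application of Proposition \ref{e7a} to $\varphi_Y$ and $\varphi_Z$ yields a unique $\psi_{YZ}:W\to Y\otimes Z$, and a second application to $\varphi_X$ and $\psi_{YZ}$ yields a unique homomorphism $W\to X\otimes(Y\otimes Z)$ compatible with the three iterated projections $\pi_X$, $\pi_Y\circ\pi_{Y\otimes Z}$, and $\pi_Z\circ\pi_{Y\otimes Z}$. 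The symmetric argument gives $(X\otimes Y)\otimes Z$ the analogous three-fold universal property with respect to its own iterated projections.

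Now taking $W=(X\otimes Y)\otimes Z$ and feeding its three iterated projections into the universal property of $X\otimes(Y\otimes Z)$ produces a unique homomorphism $\Psi:(X\otimes Y)\otimes Z\to X\otimes(Y\otimes Z)$ commuting with all three projections. Swapping the two roles produces a unique $\Phi:X\otimes(Y\otimes Z)\to(X\otimes Y)\otimes Z$ with the mirror property. The composition $\Phi\circ\Psi$ and the identity of $(X\otimes Y)\otimes Z$ both commute with the three projections of $(X\otimes Y)\otimes Z$, so the uniqueness clause of the universal property (applied with $W=(X\otimes Y)\otimes Z$) forces $\Phi\circ\Psi=\mathrm{id}$, and symmetrically $\Psi\circ\Phi=\mathrm{id}$. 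Hence $\Psi$ is the desired isomorphism.

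The main obstacle would arise only if one tried to avoid this abstract argument and build the isomorphism directly on cells: matching a face of $(X\otimes Y)\otimes Z$, whose attaching cycle is obtained by applying $(\,,\,)_\otimes^{i^\delta}$ twice (first pairing $C_\alpha$ with $C_\beta$, then pairing the result with $C_\gamma$), to the correspondingly nested face of $X\otimes(Y\otimes Z)$ requires tracking iterated least common multiples together with the orientation labels through every loop in both groupings. The universal-property approach lets me bypass this bookkeeping entirely, which is why it is preferable.
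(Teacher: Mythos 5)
Your proposal is correct and is exactly the paper's approach: the paper's proof simply cites the universal property of Proposition \ref{e7a} together with the standard categorical fact (referencing Mac Lane), and your argument spells out that same standard derivation of associativity from the universal property in detail.
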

\begin{proof} \vspace{-.7mm}A categorical result of the universal
property in Proposition \ref{e7a}. See \cite{maclane}.
\end{proof}

\vspace{-.7mm}The associativity of the complex tensor product
complicates $\Aut(X\otimes Y)$. For example, if $Y$ can be
factorized into $X\otimes Z$, then $X\otimes Y\cong X\otimes
(X\otimes Z)$ has an automorphism swapping the two copies of $X$.
Hence the symmetry of the product of complexes is also related to
the factoring of complexes. In response to associativity, we modify
the original question as follows: for complexes $X_i$ which are
irreducible with respect to complex tensor product, is the
automorphism group $\Aut(\otimes X_i)$ generated by automorphisms of
$X_i$'s, together with permutations of isomorphic factors? By a {\bf
Cartesian automorphism}, we mean an element in the subgroup of
$\Aut(\otimes X_i)$ generated in the above manner.

There have been lots of studies about the symmetry of different
products of graphs. One of the major goals of this chapter is to
apply the theory of the graph direct product to the complex tensor
product. Hence we first introduce related theorems about the graph
direct product. The book \cite{product} by Hammack, Imrich, and
Klav\v{z}ar offers a comprehensive survey of products of graphs, and
we shall follow their approach and terminology here.

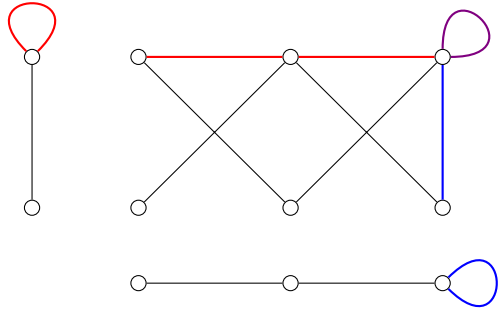
\begin{figure}
\begin{center}
\vspace{-5mm} \tikzstyle{place}=[circle,draw=black,inner
sep=0pt,minimum size=2mm]
\begin{tikzpicture}[scale=1]

\node (1) at (0,0) [place] {};

\node (4) at (0,2) [place] {};

\node (7) at (0,-1) [place] {};

\node (x) at (-1.4,0) [place] {};

\node (y) at (-1.4,2) [place] {};

\path (1) ++(2,0) node (2) [place] {} ++(2,0) node (3) [place] {};

\path (4)--++(2,0) node (5) [place] {}--++(2,0) node (6) [place] {};

\path (7)--++(2,0) node (8) [place] {}--++(2,0) node (9) [place] {};

\draw (x) to (y) (7) to (8) (4) to (2) (5) to (1) (5) to (3) (2) to
(6);

\draw [blue,thick] (6) to (3);

\draw [blue,thick] (9) to [min distance=12mm, out=45, in=-45] (9);

\draw [red,thick] (y) to [min distance=12mm, out=45, in=135] (y);

\draw [red!50!blue,thick] (6) to [min distance=12mm, out=0, in=90]
(6);

\draw [red,thick] (4)--(5)--(6);

\draw (8) to (9) ;

\end{tikzpicture}\vspace{-6mm}
\caption{direct product of graphs in $\mathfrak{S}_0$}\label{figg1}
\end{center}
\end{figure}

We briefly mentioned the direct product of graphs in Chapter
\ref{ch_GTP}. Here we give the definition again, with an emphasis on
the possible presence of loops. We say that a graph $\Gamma$ is a
{\bf simple graph with loops admitted} if for any $u,v\in
V(\Gamma)$, there is at most one edge joining $u$ and $v$, including
the case $u=v$. In particular, there is at most one loop at a
vertex. For convenience, we use $\mathfrak{S}$ to denote the class
of simple graphs, and $\mathfrak{S}_0$ to denote the class of simple
graphs with loops admitted.

\begin{defn}\label{g2}
Let $\Gamma$ and $\Gamma'$ be two graphs in $\mathfrak{S}_0$. The
{\bf direct product} of $\Gamma$ and $\Gamma'$, denoted by
$\Gamma\times\Gamma'$, is a graph in $\mathfrak{S}_0$ with vertex
set $V(\Gamma\times\Gamma')=V(\Gamma)\times V(\Gamma')$. There is an
edge joining two vertices $(v,v')$ and $(u,u')$ in
$\Gamma\times\Gamma'$ if and only if there is an edge joining $v$
and $u$ in $\Gamma$, and there is an edge joining $v'$ and $u'$ in
$\Gamma'$.
\end{defn}

Note\hfill in\hfill the\hfill above\hfill definition,\hfill
$v$\hfill and\hfill $v'$\hfill could\hfill be\hfill the\hfill
same\hfill vertex,\hfill as\hfill well\hfill as\hfill $u$\hfill
and\hfill $u'$.
\\ Figure \ref{figg1} illustrates the direct product of two graphs
in $\mathfrak{S}_0$. Under this definition, notice that a loop $L$
serves as the identity element of direct product of graphs. In other
words, for any simple graph $\Gamma$ with loops admitted, we always
have
$$L\times\Gamma\cong\Gamma\times L\cong\Gamma.$$

 Also note that the
direct product of two edges is again two edges, laid out as a cross
in the figure, which is part of the reason why graph theorists
choose the symbol ``$\times$'' \cite{product}. Therefore the direct
product of two connected graphs is not necessarily connected. The
following theorem  is known as Weichsel's Theorem \cite{product}.

\begin{thm}\label{bf}
Suppose that $\Gamma$ and $\Gamma'$ are two connected simple graphs
with at least two vertices. If $\Gamma$ and $\Gamma'$ are both
bipartite, then $\Gamma\times\Gamma'$ has exactly two components. If
at least one of $\Gamma$ and $\Gamma'$ is not bipartite, then
$\Gamma\times\Gamma'$ is connected.
\end{thm}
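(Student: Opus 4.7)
The plan is to reduce the connectivity question to a parity-of-walks question, via the elementary observation that walks in $\Gamma \times \Gamma'$ correspond bijectively to pairs of walks of the same length in the two factors with matching endpoints. Explicitly, a walk $(v_0,v_0')(v_1,v_1')\cdots(v_n,v_n')$ in $\Gamma\times\Gamma'$ is exactly a pair of walks $v_0v_1\cdots v_n$ in $\Gamma$ and $v_0'v_1'\cdots v_n'$ in $\Gamma'$ of the same length. So $(a,a')$ and $(b,b')$ lie in the same component if and only if there exist walks $a\to b$ in $\Gamma$ and $a'\to b'$ in $\Gamma'$ of a common length.

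Next I would establish the key lemma on achievable walk lengths: if $\Lambda$ is a connected simple graph with at least two vertices (hence at least one edge incident to every vertex) and $u,w\in V(\Lambda)$, then the set $W(u,w)$ of lengths of $u\to w$ walks is closed under adding $2$ (insert a backtrack along any edge at either endpoint). Consequently $W(u,w)$ is either all sufficiently large even integers, all sufficiently large odd integers, or all sufficiently large integers; and the third case occurs if and only if $\Lambda$ contains an odd cycle, i.e.\ is non-bipartite. (If $\Lambda$ is bipartite, the parity of any $u\to w$ walk is determined by which parts $u,w$ lie in; if $\Lambda$ has an odd closed walk based at some vertex, we may splice it into any walk to flip parity, after using connectedness to reach the base vertex and back.)

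Now the theorem follows easily. If $\Gamma$ or $\Gamma'$ is non-bipartite, then for any $(a,a'),(b,b')$ the set $W_\Gamma(a,b)\cap W_{\Gamma'}(a',b')$ contains all sufficiently large integers of the appropriate parity (which is no restriction when one factor supplies both parities), hence is nonempty, so $\Gamma\times\Gamma'$ is connected. If both are bipartite, fix bipartitions $V(\Gamma)=A\sqcup B$ and $V(\Gamma')=A'\sqcup B'$. Every edge of $\Gamma\times\Gamma'$ joins $A\times A'$ to $B\times B'$, or $A\times B'$ to $B\times A'$, so $(A\times A')\cup(B\times B')$ and $(A\times B')\cup(B\times A')$ are unions of components and there are at least two components. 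To see there are exactly two, apply the lemma inside each union: for $(a,a'),(b,b')$ in the same union, the parities of $W_\Gamma(a,b)$ and $W_{\Gamma'}(a',b')$ agree, so the two arithmetic progressions of achievable lengths intersect, giving a walk in $\Gamma\times\Gamma'$.

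The main technical step is the walk-length lemma, specifically verifying that $W(u,w)$ really is cofinite within its parity class in the non-bipartite case. This is where the hypothesis that both graphs have at least two vertices is essential, since it guarantees at least one edge at every vertex so backtracking is available; without it a single isolated vertex would break the padding argument. Everything else is bookkeeping on the projections $\pi_\Gamma,\pi_{\Gamma'}$.
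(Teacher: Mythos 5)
Your proof is correct, and it takes the same route the paper merely gestures at: the paper's ``proof'' of this theorem is a two-line sketch (``exploit an odd cycle properly'') that defers the details to Theorem 5.9 of the Hammack--Imrich--Klav\v{z}ar handbook, whereas you actually carry out the standard argument in full. The walk-correspondence, the closure of the set of walk lengths under adding $2$ (which is where the hypothesis of at least two vertices enters, exactly as you note), and the parity bookkeeping in the bipartite case are all handled correctly, so nothing is missing.
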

\begin{proof}
The first part of the theorem is straightforward. For the second
part, note that a simple graph is not bipartite if and only if there
is an odd cycle in the graph. By exploiting such a cycle properly,
the second part of the theorem follows. For a detailed proof, please
refer to Theorem 5.9 in \cite{product}.
\end{proof}

A graph $\Gamma$ is {\bf prime} if $\Gamma$ has more than one
vertex, and $\Gamma\cong\Gamma_1\times\Gamma_2$ implies that either
$\Gamma_1$ or $\Gamma_2$ is a loop. Note that the idea of being
prime depends on the class of graphs we are talking about. For
example, let $\Gamma$ be a path of length 3, which has 4 vertices.
Then $\Gamma$ is prime in $\mathfrak{S}$, as the only possible
factoring is the product of two edges, which is the disjoint union
of two edges. And the statement that
$\Gamma\cong\Gamma_1\times\Gamma_2$ implies either $\Gamma_1$ or
$\Gamma_2$ is a loop is still logically true. However, $\Gamma$ can
be factorized in $\mathfrak{S}_0$ as the graph on the left of Figure
\ref{figg1} times one edge in the bottom, and hence $\Gamma$ is not
prime in $\mathfrak{S}_0$.

Consider the question of factoring a graph into the product of prime
graphs. For a finite graph, such a prime factorization always
exists, since the number of vertices of factors decreases as the
factoring goes. However, such a prime factorization is not
necessarily unique, and it depends on the graph itself and the class
of graphs where we do the factoring. For example, a path of length 3
together with associativity can be used to create graphs with
non-unique prime factorizations in $\mathfrak{S}$. There are also
graphs with non-unique prime factorizations in $\mathfrak{S}_0$, an
example of which can be found in \cite{product}. The following
theorem of unique prime factorization is due to McKenzie
\cite{mckenzie_ufd}.

\begin{thm}\label{g3}
Suppose that $\Gamma\in\mathfrak{S}_0$ is a finite connected
non-bipartite graph with more than one vertex. Then $\Gamma$ has a
unique factorization into primes in $\mathfrak{S}_0$.
\end{thm}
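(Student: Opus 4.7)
The plan is to reduce uniqueness to a cancellation lemma, to be proved via a homomorphism-counting argument analogous to Corollary \ref{e3a}. Existence of \emph{some} prime factorization is routine: induct on $|V(\Gamma)|$, and observe that any non-trivial decomposition $\Gamma \cong \Gamma_1 \times \Gamma_2$ (with neither factor a loop) strictly decreases the vertex count of each factor; by Weichsel's Theorem \ref{bf}, both connectedness and non-bipartiteness are inherited by each $\Gamma_i$, so the induction remains within the stated hypotheses.

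For uniqueness, I would combine two ingredients. First, since $\times$ is the categorical product in $\mathfrak{S}_0$, for every finite $H \in \mathfrak{S}_0$ one has
$$|\Hom(H, A \times B)| = |\Hom(H, A)| \cdot |\Hom(H, B)|.$$
Second, by Lov\'asz's theorem, a finite graph in $\mathfrak{S}_0$ is determined up to isomorphism by the function $H \mapsto |\Hom(H, G)|$. Together these reduce the \emph{cancellation lemma} (if $A \times C \cong B \times C$ with $C$ finite connected non-bipartite and $|V(C)| > 1$, then $A \cong B$) to dividing the identity $|\Hom(H, A)| \cdot |\Hom(H, C)| = |\Hom(H, B)| \cdot |\Hom(H, C)|$ by the common factor $|\Hom(H, C)|$ on a class of test graphs $H$ rich enough to determine $A$ and $B$.

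Once cancellation is in hand, uniqueness follows by showing that every irreducible factor behaves as a prime in the ring-theoretic sense: if $P$ is irreducible and $P \times R \cong A \times B$, then homomorphism counts from $P$ together with cancellation force $P$ to be isomorphic to an irreducible factor of $A$ or $B$. A standard induction on $m + n$ applied to $P_1 \times \cdots \times P_m \cong Q_1 \times \cdots \times Q_n$ then yields uniqueness.

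The main obstacle is the division step in the cancellation argument. When $C$ contains a loop, the constant map guarantees $|\Hom(H, C)| \geq 1$ for every $H$, and division is immediate. When $C$ is loopless and non-bipartite, the class $\{H : \Hom(H, C) \neq \emptyset\}$ is genuinely restricted, and one must exploit the odd cycle of $C$ to construct, for each candidate pair $A$ and $B$, enough auxiliary test graphs inside this restricted class to separate them via homomorphism counts. This is precisely the technical core of McKenzie's original argument; for the detailed verification I would invoke the result from \cite{mckenzie_ufd} or the exposition in \cite{product} rather than reprove it here.
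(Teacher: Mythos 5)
The paper does not actually prove Theorem \ref{g3}: it is stated as a known result and attributed to McKenzie \cite{mckenzie_ufd}, so your final fallback to that citation coincides with what the author does, and to that extent the proposal is acceptable. Your existence argument is also fine (a one-vertex factor that is not a loop would make the product edgeless, so in any non-trivial factorization both factors have at least two vertices and the vertex count strictly drops), although invoking Weichsel's Theorem \ref{bf} for the inheritance of connectedness and non-bipartiteness is backwards --- that theorem concerns when a product of connected graphs is connected; what you need is the elementary fact that the projections are vertex-surjective and send closed walks to closed walks of the same length, hence preserve connectedness and odd cycles.

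The genuine gap is the passage from cancellation to uniqueness. A commutative cancellative monoid need not be factorial (in the multiplicative monoid of positive integers congruent to $1 \bmod 3$ one has $4\cdot 25 = 10\cdot 10$ with $4$, $25$, $10$ all irreducible), so your assertion that ``homomorphism counts from $P$ together with cancellation force $P$ to be isomorphic to an irreducible factor of $A$ or $B$'' is not a consequence of the two ingredients you list --- it \emph{is} the content of McKenzie's theorem. Note also that the Lov\'asz identity $|\Hom(H, A\times B)| = |\Hom(H,A)|\cdot|\Hom(H,B)|$ and cancellation over non-bipartite $C$ are available in settings where unique factorization genuinely fails (disconnected graphs, bipartite graphs), so connectedness and non-bipartiteness must enter the uniqueness argument structurally, not merely as a licence to divide homomorphism counts. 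The proof in \cite{mckenzie_ufd}, as presented in \cite{product}, reduces to $R$-thin graphs and analyses the Boolean square and its Cartesian skeleton, which is an argument of a rather different flavour. Since you defer to the citation for ``the technical core'' anyway, the proposal is not wrong as a whole, but the roadmap you sketch around the citation would not close on its own.
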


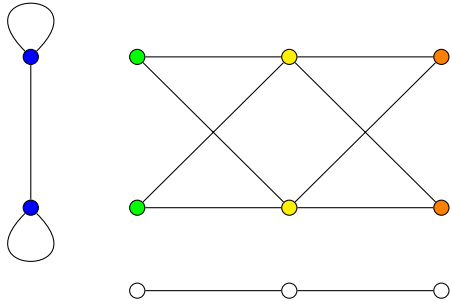
\begin{figure}
\begin{center}
\vspace{-1mm} \tikzstyle{place}=[circle,draw=black,inner
sep=0pt,minimum size=2mm]
\begin{tikzpicture}[scale=1]

\node (1) at (0,0) [place,fill=green] {};

\node (4) at (0,2) [place,fill=green] {};

\node (7) at (0,-1.1) [place] {};

\node (x) at (-1.4,0) [place,fill=blue] {};

\node (y) at (-1.4,2) [place,fill=blue] {};

\node at (4.5,0) {};

\path (1) ++(2,0) node (2) [place,fill=yellow] {} ++(2,0) node (3)
[place,fill=orange] {};

\path (4)--++(2,0) node (5) [place,fill=yellow] {}--++(2,0) node (6)
[place,fill=orange] {};

\path (7)--++(2,0) node (8) [place] {}--++(2,0) node (9) [place] {};

\draw (x) to (y) (7) to (8) (4) to (2) (5) to (1) (5) to (3) (2) to
(6);

\draw (y) to [min distance=12mm, out=45, in=135] (y);

\draw (x) to [min distance=12mm, out=225, in=315] (x);

\draw (1)--(2)--(3) (4)--(5)--(6);

\draw (8) to (9) ;

\end{tikzpicture}\vspace{2mm}
\caption{vertices with the same set of neighbours}\label{figg2}
\end{center}
\end{figure}

The next question is about the automorphism group of direct product,
which hopefully has only these Cartesian automorphisms with respect
to the product. Note that a pair of vertices with the same set of
neighbours creates pairs of vertices with the same set of neighbours
in the direct product, and results in lots of non-Cartesian
automorphisms. This phenomenon is illustrated in Figure \ref{figg2}
, where a vertex with a loop should have itself as a neighbour. We
say that a graph is {\bf $R$-thin} if there are no vertices with the
same set of neighbours. In addition to $R$-thinness, the
disconnectedness due to Theorem \ref{bf} also creates non-Cartesian
automorphisms. Even when the direct product is connected, there
might still be some exotic automorphisms. The following theorem is
due to D\"{o}rfler \cite{dorfler_aut}.

\begin{thm}\label{g4} Suppose that $\Gamma\in\mathfrak{S}_0$ is a finite connected
non-bipartite $R$-thin graph with a prime factorization
$\Gamma=\Gamma_1\times\Gamma_2\times\cdots\times \Gamma_n$ in
$\mathfrak{S}_0$. Then $\Aut(\Gamma)$ is generated by automorphisms
of prime factors and permutations of isomorphic factors.
\end{thm}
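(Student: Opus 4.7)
The plan is to exploit the unique prime factorization provided by Theorem \ref{g3} to show that any $\varphi\in\Aut(\Gamma)$ must permute the prime factors up to isomorphism, and then to use $R$-thinness to upgrade this abstract permutation into an honest decomposition of $\varphi$ as a product of factor automorphisms composed with a permutation of isomorphic factors. In outline, I would attach to the factorization $\Gamma=\Gamma_1\times\cdots\times\Gamma_n$ a family of \emph{layer relations} $\rho_1,\ldots,\rho_n$ on $V(\Gamma)=V(\Gamma_1)\times\cdots\times V(\Gamma_n)$, where $(x_j)\rho_i(y_j)$ iff $x_j=y_j$ for every $j\neq i$. The $\rho_i$-classes are the $\Gamma_i$-layers, and together they encode the direct-product structure.

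Next I would observe that an arbitrary $\varphi\in\Aut(\Gamma)$ transports the family $\{\rho_i\}$ to a new family $\{\varphi(\rho_i)\}$ which witnesses a second prime factorization of the same graph $\Gamma$. Because $\Gamma$ is finite, connected, and non-bipartite, Theorem \ref{g3} forces the two factorizations to agree up to a permutation $\sigma\in S_n$ with $\Gamma_{\sigma(i)}\cong\Gamma_i$. This already produces the ``permutation of isomorphic factors'' part of the desired decomposition; what remains is to show that on each layer $\varphi$ restricts to a genuine isomorphism between the corresponding prime factors.

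The key and hardest step is the rigidification by $R$-thinness. Without $R$-thinness, two vertices with identical neighbourhoods can be freely interchanged in $\Gamma$ without being detected by the edge structure, and this creates automorphisms that do not respect any layer decomposition. Under the $R$-thin hypothesis one can recover the layer relations $\rho_i$ purely from $\Gamma$ itself using the standard square-relation argument: one shows that two edges belong to a common $\Gamma_i$-layer iff they participate in certain $4$-cycles whose ``diagonal'' behaviour is forced once no two vertices share neighbourhoods. Weichsel's Theorem \ref{bf} feeds into this because non-bipartiteness guarantees that the relevant product graphs stay connected, so global path-comparisons between layers are meaningful. Once each $\rho_i$ is a graph-theoretic invariant, $\varphi$ must map the $\rho_i$-class through a chosen base point to the $\rho_{\sigma(i)}$-class through the image base point, and the restriction yields a well-defined factor isomorphism $\varphi_i\colon\Gamma_i\to\Gamma_{\sigma(i)}$.

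Finally I would assemble these ingredients: $\varphi$ coincides with the Cartesian automorphism $(\varphi_1,\ldots,\varphi_n)$ composed with the factor permutation $\sigma$, exhibiting it as a Cartesian automorphism in the sense defined before Proposition \ref{g1}. The main obstacle is entirely contained in the second step above, namely verifying that the layer relations $\rho_i$ are canonically determined by $\Gamma$ when the hypotheses hold; this is the combinatorial core of the theorem and is carried out in detail in \cite{product}, so I would invoke that analysis rather than re-derive the square-relation machinery here.
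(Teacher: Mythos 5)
The paper does not prove this statement at all: Theorem \ref{g4} is quoted as a known result of D\"orfler, with \cite{dorfler_aut} and \cite{product} standing in for the argument. Your proposal is therefore not in competition with a proof in the paper; it is an outline of the standard argument from the Handbook, and since you too ultimately defer the combinatorial core to \cite{product}, the two treatments are essentially the same in substance.

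Two remarks on the outline itself. First, there is a logical mis-ordering in your second paragraph: Theorem \ref{g3} (McKenzie) gives uniqueness of the prime factorization only up to isomorphism of factors, i.e.\ it identifies the multiset of isomorphism types, not the coordinatization of $V(\Gamma)$. It does not by itself force the transported layer relations $\varphi(\rho_i)$ to coincide with the original ones up to a permutation $\sigma$; that identification of layers as partitions of the vertex set is exactly what $R$-thinness buys, and it is supplied only in your third paragraph. (Indeed Theorem \ref{g3} holds without $R$-thinness, while Theorem \ref{g4} fails without it, which shows the permutation of layers cannot come from unique factorization alone.) Since your third paragraph does provide the missing ingredient, this is a presentational defect rather than a gap. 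Second, the ``square-relation / $4$-cycle'' machinery you invoke is really the tool for the Cartesian product (the Imrich--Miller result, Theorem \ref{h9} in this paper); for the direct product the analogous canonical object in \cite{product} is the Boolean square and the Cartesian skeleton, with Weichsel's theorem entering to keep the relevant graphs connected. Since you are citing \cite{product} for that analysis anyway, the outline is acceptable, but the named mechanism should be corrected if you were to write this out.
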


We would like to use Theorems \ref{g3} and \ref{g4} to develop
similar results for the complex tensor product. The first problem we
immediately encounter is that, for the complex tensor product, we
obtain the 1-skeleton of the product through the graph tensor
product, which is not exactly the same as the direct product of
graphs. Fortunately, such a difference does not really take place in
graphs with higher symmetries.

\begin{prop}\label{g5}
Let $\Gamma\in\mathfrak{S_0}$ be a finite connected non-bipartite
$R$-thin graph with more than one vertex, and
$\Gamma=\Gamma_1\times\Gamma_2\times\cdots\times \Gamma_n$ be the
unique prime factorization in $\mathfrak{S}_0$. If $\Gamma$ is
edge-transitive, then $\Gamma$ and each prime factor $\Gamma_i$ are
in $\mathfrak{S}$.
\end{prop}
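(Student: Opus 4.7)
The plan is to establish the conclusion in three stages: first show $\Gamma$ itself is loop-free, then deduce that at least one prime factor is loop-free, and finally bootstrap this into showing every prime factor is loop-free.

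For the first stage, I would use edge-transitivity directly. Any graph isomorphism sends loops to loops and non-loops to non-loops (they have different numbers of distinct endpoints), so under $\mathrm{Aut}(\Gamma)$ the loops and non-loops form disjoint unions of orbits. Since $\Gamma$ is edge-transitive and connected with at least two vertices, it must contain some non-loop edge, so every edge is a non-loop and $\Gamma\in\mathfrak{S}$.

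For the second stage, I would argue contrapositively: if every $\Gamma_j$ had a loop at some vertex $v_j^{\ast}$, then the product vertex $(v_1^{\ast},\dots,v_n^{\ast})$ would carry a loop in $\Gamma$ by Definition \ref{g2}, contradicting the first stage. Hence some $\Gamma_{j_0}$ is loop-free.

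For the third stage, I would assume for contradiction that some factor, say $\Gamma_1$, has a loop at $v_1$. Since $\Gamma_1$ is connected with more than one vertex, $v_1$ also has a non-loop neighbor $u_1$. For each $j\geq 2$, pick a vertex $v_j$ together with any incident edge in $\Gamma_j$ (loop or non-loop, possible since each prime factor contains at least one edge), and let $w_j$ be the other endpoint (set $w_j=v_j$ if the chosen edge is a loop). Then both $e_L=\{(v_1,v_2,\dots,v_n),(v_1,w_2,\dots,w_n)\}$ and $e_N=\{(v_1,v_2,\dots,v_n),(u_1,w_2,\dots,w_n)\}$ are edges of $\Gamma$, differing only in whether coordinate $1$ uses the loop at $v_1$ or the non-loop $v_1u_1$.

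The finishing move is to introduce the invariant $n_L(e)=\#\{i:\pi_i(e)\text{ is a loop in }\Gamma_i\}$. By Theorem \ref{g4} (whose hypotheses are exactly the standing assumptions on $\Gamma$), every element of $\mathrm{Aut}(\Gamma)$ is a composition of prime-factor automorphisms and permutations of isomorphic factors; the former preserve loop/non-loop type coordinatewise, and the latter merely permute coordinates, so $n_L$ is $\mathrm{Aut}(\Gamma)$-invariant. Since $n_L(e_L)=n_L(e_N)+1$, the two edges lie in different orbits, contradicting edge-transitivity. The main subtlety — and the only place needing care — is verifying that $e_L$ and $e_N$ can genuinely be arranged to agree on coordinates $2,\dots,n$, but this is guaranteed by picking a single common edge-configuration at each such coordinate.
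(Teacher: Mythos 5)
Your proof is correct and follows essentially the same route as the paper: both invoke Theorem \ref{g4} to reduce $\Aut(\Gamma)$ to Cartesian automorphisms and then exhibit two edges of $\Gamma$ distinguished by whether a coordinate projection is a loop, which no Cartesian automorphism can identify, contradicting edge-transitivity. The only cosmetic difference is that you track the permutation-invariant count $n_L$ of loop coordinates, whereas the paper splits the factors into a loop-bearing part $\Gamma_\alpha$ and a loop-free part $\Gamma_\beta$ and notes that permutations of isomorphic factors cannot mix the two groups.
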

\begin{proof}
Since $\Gamma$ has more than one vertex, the connectedness of
$\Gamma$ implies that $\Gamma$ has a non-loop edge. By the
edge-transitivity of $\Gamma$, we know $\Gamma$ has no loop, and
hence is in $\mathfrak{S}$. If each factor $\Gamma_i$ has a loop,
then the product $\Gamma$ will have a loop, which is not true. If
each factor $\Gamma_i$ is loop-free, then we have finished the
proof. Hence we can assume there is at least one factor with a loop,
and at least one factor without a loop.

Let $\Gamma_\alpha$ be the direct product of all factors with a
loop, and $\Gamma_\beta$ be the direct product of all factors
without a loop. Then we have
$\Gamma=\Gamma_\alpha\times\Gamma_\beta$. Note that permuting
isomorphic factors of $\Gamma$ does not involve permuting factors of
$\Gamma_\alpha$ with factors of $\Gamma_\beta$. By Theorem \ref{g4},
we have $\Aut(\Gamma)=\Aut(\Gamma_\alpha)\times\Aut(\Gamma_\beta)$.
Since a prime factor has more than one vertex, $\Gamma_\alpha$ and
$\Gamma_\beta$ both have more than one vertex. Since $\Gamma$ is
connected, $\Gamma_\alpha$ and $\Gamma_\beta$ are both connected.
Hence $\Gamma_\alpha$ has a loop at some vertex $v$ and a non-loop
edge joining two vertices $v_\alpha$ and $v_\alpha'$, while
$\Gamma_\beta$ has a non loop edge joining two vertices $v_\beta$
and $v_\beta'$. Then in $\Gamma=\Gamma_\alpha\times\Gamma_\beta$,
there is an edge joining $(v,v_\beta)$ and $(v,v_\beta')$, and
another edge joining $(v_\alpha,v_\beta)$ and
$(v_\alpha',v_\beta')$. Notice that
$\Aut(\Gamma)=\Aut(\Gamma_\alpha)\times\Aut(\Gamma_\beta)$ can not
send the first edge to the second one, contradicting the assumption
that $\Gamma$ is edge-transitive.
\end{proof}
\begin{rem}
To visually interpret the last few lines of the proof, it says that
a Cartesian automorphism can not permute horizontal edges with slant
edges in Figure \ref{figg2}.
\end{rem}

Now we move on to the factorization of polygonal cell complexes.
First consider the following example. Let $X$ and $Y$ be a triangle
and a pentagon respectively, $X'$ be a cycle of length 3 with two
triangles attached, and $Y'$ be a cycle of length 5 with two
pentagons attached. Since the numbers of vertices of these complexes
are prime, the only possible way to factorize them is to have a
factor of one vertex with at least a loop and a face, which creates
double edges in the product. Hence we know these complexes can not
be factorized further, and we have non-unique factorizations
$X\otimes Y'\cong X'\otimes Y$.

Here we give another example of non-unique factorization. Let $X$ be
a triangle, and $Y'$ be a $(7\cdot5)$-gon wrapped around a cycle of
length 5. By Definition \ref{e5}, since $3$ and $7\cdot5$ are
coprime, $X\otimes Y'$ has two faces of length $3\cdot 5 \cdot 7$,
wrapped around two cycles of length $3\cdot5$ for $7$ rounds.
Consider a $(7\cdot3)$-gon $X'$ wrapped around a cycle of length 3,
and a pentagon $Y$. It is easy to see that $X\otimes Y'\cong
X'\otimes Y$, and these complexes can not be factorized further. To
avoid these non-uniquely factorized situations, we restrict our
discussion to the factorization of simple complexes.

\begin{defn}\label{g6}
A polygonal cell complex $X$ is a {\bf simple} complex if $X$ has at
least one face, $X$ has no pairs of faces attached along the same
cycle, and the attaching map of each face does not wrap around a
cycle more than once. A polygonal cell complex $X$ is a {\bf prime}
complex if there do not exist complexes $X_1$ and $X_2$ such that
$X=X_1\otimes X_2$.
\end{defn}
\begin{rem}
Figure \ref{figg3} above is a simple complex with two 1-gons. If we
add another 2-gon attached along two different loops, the resulting
complex is still a simple complex, as the boundary cycles of theses
faces are not exactly the same.
\end{rem}

To factorize a complex $X$, our general setting is as follows. We
assume that we know a factorization of the 1-skeleton
$X^1=\Gamma_1\otimes \Gamma_2$, and try to find a complex
factorization $X=X_1\otimes X_2$ such that $X_1^1=\Gamma_1$ and
$X_2^1=\Gamma_2$. A natural thought is to project the faces of $X$
down to $\Gamma_1$ and $\Gamma_2$ to be faces. Consider the complex
tensor product of a triangle and a pentagon, which is a complex with
two 15-gons. Note that when we project these two 15-gons back to the
1-skeletons of factors, what we obtain are 15-gons wrapped around
cycles of length 3 and 5 respectively, not the original faces.


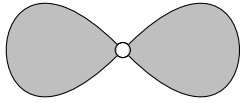
\begin{figure}
\begin{center}
\tikzstyle{place}=[circle,draw=black,inner sep=0pt,minimum size=2mm]
\vspace{-13mm}
\begin{tikzpicture}[scale=1.1]

\node (3) at (6.5,.6) [place] {};

\draw [fill=lightgray] (3) to [min distance=25mm, out=135, in=-135]
(3);

\draw [fill=lightgray] (3) to [min distance=25mm, out=45, in=-45]
(3);

\node at (6.5,.6) [place] {};

\end{tikzpicture}\vspace{-13mm}
\caption{a simple complex with two 1-gons}\label{figg3}
\end{center}
\end{figure}

\begin{defn}\label{g7}
Let $X$ be a polygonal cell complex, $f$ be a face of $X$ attached
along a cycle $C_f$, and $\Gamma_1$ and $\Gamma_2$ be two graphs
such that $X^1=\Gamma_1\otimes \Gamma_2$. The {\bf reductive
projection} of $f$ to $\Gamma_i$, denoted by $\pi_{\Gamma_i}(f)$, is
a face attached along the reduced cycle of $\pi_{\Gamma_i}(C_f)$ in
$\Gamma_i$, namely the shortest cycle $C$ such that repeating $C$
gives $\pi_{\Gamma_i}(C_f)$.
\end{defn}

\begin{rem}
In exactly the same way, we can define $\pi_{\Gamma_i}(f)$ for the
case $X^1=\otimes_{i=1}^n\Gamma_i$. Note that when
$X^1=\Gamma_1\otimes\Gamma_2\otimes\Gamma_3$, we have
$\pi_{\Gamma_1}(f)=\pi_{\Gamma_1}(\pi_{\Gamma_1\otimes\Gamma_3}(f))=\pi_{\Gamma_1}(\pi_{\Gamma_1\otimes\Gamma_2}(f))$.
\end{rem}

\begin{prop}\label{g8}
Let $X$ be a simple complex, and $\Gamma_1$ and $\Gamma_2$ be two
graphs such that $X^1=\Gamma_1\otimes \Gamma_2$. If there exist two
complexes $X_1$ and $X_2$ with 1-skeletons $\Gamma_1$ and $\Gamma_2$
respectively such that $X=X_1\otimes X_2$, then $X_1$ and $X_2$ are
simple complexes whose faces are precisely the reductive projections
of faces of $X$.
\end{prop}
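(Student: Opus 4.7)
The plan is to exploit the explicit face description in Definition~\ref{e5}. Since $X=X_1\otimes X_2$ is simple it has at least one face, so by Definition~\ref{e5} both $X_1$ and $X_2$ must have faces as well. The faces of $X$ are then indexed as $f_{\alpha,\beta}^{i^\delta}$ with boundary cycle $(C_\alpha,C_\beta)_\otimes^{i^\delta}$, whose projection to $\Gamma_1$ is $\frac{[n_\alpha,n_\beta]}{n_\alpha}C_\alpha$; the reductive projection of $f_{\alpha,\beta}^{i^\delta}$ is exactly $f_\alpha$ provided $C_\alpha$ is itself a simple cycle, and as $\beta$ ranges over the non-empty $F(X_2)$ every face of $X_1$ arises as a reductive projection. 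Thus the whole proposition reduces to verifying that each $X_i$ is a simple complex; by symmetry I will focus on $X_1$.

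I would then check the three conditions of Definition~\ref{g6} for $X_1$ one by one, each by contradiction against the simplicity of $X$. The existence of a face is noted above. For the no-repeated-boundary condition, if $f_\alpha$ and $f_{\alpha'}$ share the same attaching cycle $C_\alpha=C_{\alpha'}$, then for any $\beta$ the distinct faces $f_{\alpha,\beta}^{0^0}$ and $f_{\alpha',\beta}^{0^0}$ of $X$ would be attached along the identical cycle $(C_\alpha,C_\beta)_\otimes^{0^0}$, contradicting simplicity of $X$.

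The third condition, that no attaching cycle $C_\alpha$ wraps around a shorter cycle, is the delicate step. Suppose $C_\alpha$ has true period $n'=n_\alpha/k$ with $k>1$, pick any $f_\beta\in F(X_2)$, and let $m'$ be the true period of $C_\beta$. The boundary cycle $(C_\alpha,C_\beta)_\otimes^{0^0}$ has length $L=[n_\alpha,n_\beta]$; since the first tensor coordinate repeats every $n'$ steps and the second every $m'$, its true period $T$ divides $[n',m']$. Two sub-cases arise. If $T<L$, then the face $f_{\alpha,\beta}^{0^0}$ wraps a length-$T$ cycle $L/T>1$ times, violating the third condition for $X$. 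If $T=L$, then $L=[n',m']$, and a short $\gcd$ computation gives $(n_\alpha,n_\beta)\geq k\gcd(n',m')>\gcd(n',m')$; since two indices $i,j$ produce the same cyclic walk precisely when $\gcd(n',m')\mid(j-i)$, there exist distinct $i,j\in\{0,\ldots,(n_\alpha,n_\beta)-1\}$ with $(C_\alpha,C_\beta)_\otimes^{i^0}=(C_\alpha,C_\beta)_\otimes^{j^0}$ as cyclic walks, yielding two distinct faces of $X$ on a common boundary and contradicting the second condition for $X$.

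The main obstacle is exactly this dichotomy in the third condition: a non-simple $C_\alpha$ does not always produce a non-simple boundary cycle in $X$, and one must observe that whenever the length is preserved under tensoring the extra redundancy instead reappears as duplicate faces on a common boundary. Once $X_1$ and symmetrically $X_2$ are both known to be simple, the identification of their faces with the reductive projections of faces of $X$ follows from the first paragraph.
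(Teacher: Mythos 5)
Your proposal is correct and rests on the same two mechanisms as the paper's proof: the $\gcd$/lcm arithmetic showing that a period drop in a factor's attaching cycle either shortens the true period of the product cycle (forcing a face of $X$ to wrap, violating simplicity) or else enlarges $(n_\alpha,n_\beta)$ past the index-collision threshold (producing two faces of $X$ on the same cycle). The paper organizes this top-down — fixing a face $f$ of $X$ and proving the multiplicities $m_1=m_2=1$ for its generators — whereas you verify the simplicity conditions of each factor directly, but the underlying argument is essentially the same.
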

\begin{proof}
Suppose that such complexes $X_1$ and $X_2$ exist. Let $f$ be a face
of $X$ attached along a cycle $C_f$ of length $n$, and let $C_j$ of
length $n_j$ be the reduced cycle of $\pi_{\Gamma_j}(C_f)$ in $X_j$
for $j\in\{1,2\}$. Note that $f$ is generated by a face $f_1$ of
$X_1$ attached along $m_1C_1$, and by a face $f_2$ of $X_2$ attached
along $m_2C_2$, where $m_iC_i$ is the cycle made by repeating $C_i$
for $m_i$ times.  By Definition \ref{e5}, $f_1$ and $f_2$ generate
faces attached along $(m_1C_1,m_2C_2)_\otimes^{i^\delta}$, where
$i\in\{0,1,\ldots,(m_1n_1,m_2n_2)-1\}$ and $\delta\in\{0,1\}$. By
the Euclidean algorithm, we can find an integer $k>0$ such that
$k\equiv 0\mod n_1$ and $k\equiv(n_1,n_2)\mod n_2$. Note that in $k$
steps along $(m_1C_1,m_2C_2)_\otimes^{0^\delta}$, we can walk from
the starting vertex of $(m_1C_1,m_2C_2)_\otimes^{0^\delta}$ to the
starting vertex of $(m_1C_1,m_2C_2)_\otimes^{(n_1,n_2)^\delta}$, so
these two cycles are identical. Since $X$ is simple, there are no
pairs of faces attached along the same cycle in $X$. Therefore we
have $(n_1,n_2)\ge(m_1n_1,m_2n_2)\ge(n_1,n_2).$
Now consider the length of the face $f$, which is
$$n=[m_1n_1,m_2n_2]=\frac{m_1n_1\cdot m_2n_2}{(m_1n_1,m_2n_2)}=\frac{m_1m_2\cdot n_1n_2}{(n_1,n_2)}=m_1m_2\cdot[n_1,n_2].$$
This shows that $f$ is attached along some cycle
$(C_1,C_2)_\otimes^{i^\delta}$ of length $[n_1,n_2]$ for $m_1m_2$
rounds, and the simplicity of $X$ implies that $m_1=m_2=1$. In other
words, $X_i$ must have the reductive projection $\pi_{\Gamma_i}(f)$
of $f$ as its face. Note that different faces of $X$ might have the
same reductive projection in $X_i$, and we have to discard
duplicated ones. Otherwise duplicated faces in $X_i$ will generate
duplicated faces in $X$, violating the simplicity of $X$.
Conversely, any faces $f_1$ of $X_1$ and $f_2$ of $X_2$ are the
reductive projections of the faces in $X$ they generate. Hence $X_1$
and $X_2$ are the simple complexes with exactly those faces from the
reductive projections of faces of $X$.
\end{proof}

\begin{prop}\label{g9}
Let $X$, $X_1$, and $X_2$ be polygonal cell complexes such that
$X=X_1\otimes X_2$. Then $X$ is a simple complex if and only if
$X_1$ and $X_2$ are simple complexes.
\end{prop}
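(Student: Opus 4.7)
The forward direction is immediate from Proposition \ref{g8}: taking $\Gamma_i := X_i^1$ satisfies the hypothesis $X^1 = \Gamma_1 \otimes \Gamma_2$ automatically, and the conclusion of \ref{g8} yields simplicity of $X_1$ and $X_2$ together with the description of their faces as reductive projections of the faces of $X$ (so their face sets are non-empty whenever $X$ has a face).

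For the backward direction, I would assume $X_1, X_2$ are simple and verify each clause of Definition \ref{g6} for $X := X_1 \otimes X_2$. That $X$ has at least one face is immediate from Definition \ref{e5}, since any $f_\alpha \in F(X_1)$ and $f_\beta \in F(X_2)$ produce $f_{\alpha, \beta}^{0^0} \in F(X)$. To rule out wrapping, suppose for contradiction some face $f_{\alpha, \beta}^{i^\delta}$ has attaching cycle $D = mC$ with $m > 1$. Projecting to $X_1^1$ gives $m \cdot \pi_{X_1}(C) = \pi_{X_1}(D) = \tfrac{n_\beta}{(n_\alpha, n_\beta)} C_\alpha$. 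Since $C_\alpha$ is primitive (by simplicity of $X_1$), the primitive period of the right-hand walk is exactly $C_\alpha$, forcing $m$ to divide $\tfrac{n_\beta}{(n_\alpha, n_\beta)}$. Symmetrically $m$ divides $\tfrac{n_\alpha}{(n_\alpha, n_\beta)}$, and since these two integers are coprime, $m = 1$, a contradiction.

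The substantive part of the proof is the no-duplicate-cycles clause. Suppose two faces $f_{\alpha_1, \beta_1}^{i_1^{\delta_1}}$ and $f_{\alpha_2, \beta_2}^{i_2^{\delta_2}}$ share an attaching cycle $D$. The same projection argument identifies the primitive period of $\pi_{X_1}(D)$ as both $C_{\alpha_1}$ and $C_{\alpha_2}$, so simplicity of $X_1$ forces $\alpha_1 = \alpha_2 =: \alpha$; symmetrically $\beta_1 = \beta_2 =: \beta$. To finish, I would distinguish the remaining parameters $(i, \delta)$ by extracting two invariants of $D$. First, the superscript $\delta$ is detectable from the edges used: by Definition \ref{e1} a product edge of type $\delta = 0$ pairs ``first to first, second to second'' across factors, whereas a type $\delta = 1$ edge crosses that pairing; the walk $(C_\alpha, C_\beta)_\otimes^{i^0}$ steps in the same direction through both factors and so uses only $\delta = 0$ edges of $X^1$, while $(C_\alpha, C_\beta)_\otimes^{i^1}$ uses only $\delta = 1$ edges, so the two families occupy disjoint edge sets. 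Second, within a fixed $\delta$ the quantity ``position along $C_\beta$ minus (or plus, for $\delta = 1$) position along $C_\alpha$'' at any step equals $i \pmod{(n_\alpha, n_\beta)}$ and is preserved under cyclic shift, so the restriction $i \in \{0, \ldots, (n_\alpha, n_\beta) - 1\}$ of Definition \ref{e5} pins $i$ down uniquely. Combined, the two invariants force $(i_1, \delta_1) = (i_2, \delta_2)$.

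The main anticipated obstacle is making the second invariant unambiguous in the presence of vertex repetitions along $C_\alpha$ or $C_\beta$ — these are permitted in a simple complex, since primitivity forbids only global wrapping, not self-intersection of the attaching walk. The remedy is to read positions from $D$ as a walk of \emph{edges} rather than vertices: at each step the walk specifies a particular product edge, whose factor indices uniquely determine the positions in $C_\alpha$ and $C_\beta$ modulo $n_\alpha$ and $n_\beta$, and primitivity of $C_\alpha, C_\beta$ ensures these positions are well defined mod the respective lengths.
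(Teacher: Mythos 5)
Your forward direction and your treatment of the wrapping condition are fine; the latter is a legitimate variant of the paper's computation $n=[l_1,l_2]\le \frac{n}{m}$, rephrased as divisibility of $m$ into the two coprime cofactors. Likewise the reduction to $\alpha_1=\alpha_2$, $\beta_1=\beta_2$ via uniqueness of the primitive root matches the paper.

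The gap is in the no-duplicate-cycles step, precisely at the point you flagged as the ``main anticipated obstacle'': your remedy does not work, and neither does your first invariant. Both presuppose that the attaching walks $C_\alpha$, $C_\beta$ use each edge of the 1-skeleton at most once, or at least always in the same direction. Simplicity guarantees neither --- it forbids only global wrapping and duplicate boundary cycles, so the boundary walk of a face of a simple complex may traverse the same edge $e_\alpha$ once forward and once backward (e.g.\ $u\rightarrow w$ via $e$, a loop at $w$, $w\rightarrow u$ via $e$, a loop at $u$ is a primitive closed walk of length $4$). This kills the $\delta$-invariant: the superscript of a product edge $e_{\alpha,\beta}^{\delta}$ records concordance of the two factor traversals \emph{relative to the fixed endpoint labelings} $v_0,v_1$ and $v_0',v_1'$ of Definition \ref{e1}, not relative to the orientations of $C_\alpha$ and $C_\beta$; so if $C_\alpha$ uses $e_\alpha$ in both directions while $C_\beta$ uses $e_\beta$ twice in the same direction, the ``aligned'' cycle $(C_\alpha,C_\beta)_\otimes^{i^0}$ already contains both $e_{\alpha,\beta}^0$ and $e_{\alpha,\beta}^1$, and the claimed disjointness of edge sets fails. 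It also kills the positional invariant: knowing which product edge is used at a step does not determine the positions along $C_\alpha$ and $C_\beta$, because primitivity does not prevent an edge from occurring at two different positions of the walk. The paper's proof circumvents this by arguing globally rather than locally: by the remark after Definition \ref{e5}, each oriented pair of corners of $f_1$ and $f_2$ occurs in exactly one generated face, so two distinct generated faces sharing a boundary cycle would force some corner of that cycle to arise from two different corner pairs, $(v_i,u_{i'})=(v_j,u_{j'})$ with $(i,i')\neq(j,j')$; propagating this single coincidence around the entire boundary gives $v_{i+k}=v_{j+k}$ for all $k$ (or the analogous statement in the second factor), i.e.\ a nontrivial period of the whole cyclic word $C_\alpha$ (or $C_\beta$), and \emph{that} is what simplicity forbids. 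To repair your argument you would need to replace ``read the position off one edge'' by ``compare the full cyclic sequences and invoke uniqueness of the primitive root,'' which is in substance the paper's argument.
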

\begin{proof}
Proposition \ref{g8} takes care of the only if part, and here we
prove the if part. Suppose that $X$ has an $n$-gon $f$ attached
along a cycle for $m$ rounds. Since $X_1$ and $X_2$ are simple, $f$
must be generated by the reductive projections of $f$ to $X_1^1$ and
$X_2^1$, which are of length $l_1$ and $l_2$ respectively. Note that
$l_1$ and $l_2$ both divide $\frac{n}{m}$. Then the two reductive
projections generate faces of length $n=[l_1,l_2]\le \frac{n}{m}$.
Hence we can conclude that $m=1$. If there is another face $f'$ in
$X$ attached along the same cycle with $f$, then $f'$ is also
generated by the reductive projections of $f$. If we can show a face
in $X_1$ and a face in $X_2$ do not generate duplicated faces in
$X$, then this implies $X$ is a simple complex.

Suppose that a face $f_1$ of $X_1$ has vertices
$v_0,v_1,\ldots,v_{p-1},v_0$ in order, and a face $f_2$ of $X_2$ has
vertices $u_0,u_1,\ldots,u_{q-1},u_0$ in order. By the remark after
Definition \ref{e5}, every pair of corners of $f_1$ and $f_2$
appears exactly once in the faces generated by $f_1$ and $f_2$. If
two faces generated by $f_1$ and $f_2$ are attached along the same
cycle in $X$, there must be two pairs of corners of $f_1$ and $f_2$
forming the same corner in $X$. In particular, we can find
$(v_i,u_{i'})=(v_j,u_{j'})$ such that $i\neq j$ or $i'\neq j'$. When
$i\neq j$, we have $v_i=v_j$ and $v_{i+k}=v_{j+k}$ for any integer
$k$ mod $p$. This implies that $f_1$ wraps around a cycle more than
once, violating the simplicity of $X_1$. Similarly $i'\neq j'$
contradicts the simplicity of $X_2$. The contradiction results from
the assumption that two faces generated by $f_1$ and $f_2$ are
attached along the same cycle in $X$. Hence we know that $f_1$ and
$f_2$ does not generate duplicated faces, and the simplicity of $X$
follows.
\end{proof}

\begin{prop}\label{g10}
Let $X$ be a simple complex, and $\Gamma_1$ and $\Gamma_2$ be two
graphs such that $X^1=\Gamma_1\otimes\Gamma_2$. Then the following
two statements are equivalent:

\vspace{-3pt}
\begin{itemize}
\itemsep=-3pt
\item  [(1)] There exist two complexes $X_1$ and $X_2$ such that $X_i^1=\Gamma_i$
and $X=X_1\otimes X_2$.
\item  [(2)] For any faces $f_1$ and $f_2$ of $X$, $X$ contains all faces
generated by $\pi_{\Gamma_1}(f_1)$ and $\pi_{\Gamma_2}(f_2)$.
\end{itemize}
\vspace{-8pt}
\end{prop}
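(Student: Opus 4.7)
The implication (1) $\Rightarrow$ (2) should follow quickly from the work already done. Assuming $X=X_1\otimes X_2$ with $X_i^1=\Gamma_i$, Proposition \ref{g8} tells us that $X_1$ and $X_2$ are simple and that their faces are precisely the distinct reductive projections of faces of $X$. Thus $\pi_{\Gamma_1}(f_1)$ and $\pi_{\Gamma_2}(f_2)$ are genuine faces of $X_1$ and $X_2$, and by Definition \ref{e5} every face they generate is a face of $X_1\otimes X_2=X$.

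For (2) $\Rightarrow$ (1) the plan is to explicitly construct the factors forced on us by Proposition \ref{g8}. Let $X_1$ be the polygonal cell complex with 1-skeleton $\Gamma_1$ whose faces are the distinct reductive projections $\pi_{\Gamma_1}(f)$ for $f\in F(X)$, and define $X_2$ analogously. Because reductive projection already produces the shortest cycle along which the attaching map wraps exactly once, $X_1$ and $X_2$ are automatically simple. The remaining task is to verify the equality $X=X_1\otimes X_2$; since 1-skeletons already match, we need only match face sets.

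The key technical step is the following length computation, essentially the simplicity argument of Proposition \ref{g8} run in reverse. Take any $f\in F(X)$ attached along $C_f$ of length $n$, and let $C_i$ of length $n_i$ be the reduced cycle of $\pi_{\Gamma_i}(C_f)$, so $\pi_{\Gamma_i}(C_f)=m_iC_i$ for positive integers $m_1,m_2$ with $m_1n_1=m_2n_2=n$. If $d=\gcd(m_1,m_2)>1$, then $C_f$ projects to $d$-fold repetitions on both sides, which by Corollary \ref{e4} (applied segmentwise) forces $C_f$ itself to wrap $d$ times around a shorter cycle in $\Gamma_1\otimes\Gamma_2$, contradicting the simplicity of $X$. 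Hence $\gcd(m_1,m_2)=1$, which forces $m_1=n_2/(n_1,n_2)$, $m_2=n_1/(n_1,n_2)$, and so $n=[n_1,n_2]$. Combining this with Corollary \ref{e4} and the uniqueness of lifts, $C_f$ must equal $(C_1,C_2)_\otimes^{i^\delta}$ for some $i,\delta$, i.e.\ $f$ appears in $X_1\otimes X_2$ as a face generated by $\pi_{\Gamma_1}(f)$ and $\pi_{\Gamma_2}(f)$.

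For the reverse inclusion, any face of $X_1\otimes X_2$ is by construction generated by $\pi_{\Gamma_1}(g_1)$ and $\pi_{\Gamma_2}(g_2)$ for some $g_1,g_2\in F(X)$, and hypothesis (2) then places every such face in $X$. Finally, the simplicity of $X$ ensures that distinct attaching cycles correspond to distinct faces, so no face is counted twice on either side, completing the identification $X=X_1\otimes X_2$. The step I expect to require the most care is the length computation in the previous paragraph: keeping track of the indices $(i,\delta)$ and distinguishing between a cycle of length $n$ and its possible repetitions of shorter cycles is exactly where the hypothesis that $X$ is simple is indispensable.
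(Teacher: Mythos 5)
Your proof is correct and follows essentially the same route as the paper: the same appeal to Proposition \ref{g8} for (1)$\Rightarrow$(2), and for (2)$\Rightarrow$(1) the same construction of $X_1,X_2$ from the distinct reductive projections together with the coprimality/length computation showing $n=[n_1,n_2]$, so that each face of $X$ is generated by its own projections and hypothesis (2) supplies the reverse inclusion. The only cosmetic difference is that the paper cites Proposition \ref{g9} to certify that $X_1\otimes X_2$ is simple (hence has no duplicated faces), whereas you handle this bookkeeping informally in your closing sentence.
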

\begin{proof}

Assume (1). By Proposition \ref{g8}, $X_1$ and $X_2$ are the simple
complexes with exactly those reductive projections of $X$ as faces.
For any faces $f_1$ and $f_2$ of $X$, $\pi_{\Gamma_1}(f_1)$ is a
face of $X_1$, and $\pi_{\Gamma_2}(f_2)$ is a face of $X_2$. Since
$X=X_1\otimes X_2$, $X$ contains all faces generated by
$\pi_{\Gamma_1}(f_1)$ and $\pi_{\Gamma_2}(f_2)$. Hence (1) implies
(2).

Assume (2). First we show that a face $f$ of $X$ can be generated by
$\pi_{\Gamma_1}(f)$ and $\pi_{\Gamma_2}(f)$. Let $C_f$, $C_1$, and
$C_2$ be the boundary cycles of $f$, $\pi_{\Gamma_1}(f)$, and
$\pi_{\Gamma_2}(f)$ respectively. By Definition \ref{g7}, we can
assume that $\pi_{\Gamma_j}(C_f)=n_jC_j$ for $j\in\{1,2\}$, namely
repeating $C_j$ for $n_j$ times gives $\pi_{\Gamma_j}(C_f)$. Note
that $f$ is attached along some cycle
$(n_1C_1,n_2C_2)_\otimes^{i^\delta}$, which can be rewritten as
$(n_1,n_2)(\frac{n_1}{(n_1,n_2)}C_1,\frac{n_2}{(n_1,n_2)}C_2)_\otimes^{i^\delta}$.
Since the simple complex $X$ has no face attached around a cycle
more than once, we know that $(n_1,n_2)=1$, and therefore
$$\text{length } C_f=n_1\cdot(\text{length } C_1)=n_2\cdot(\text{length } C_2)=[\text{length } C_1,\text{length } C_2].$$
This shows that $\pi_{\Gamma_1}(f)$ and $\pi_{\Gamma_2}(f)$ can
generate the face $f$. Now let $X_1$ and $X_2$ be the simple
complexes with exactly those faces from the reductive projections of
$X$. By Proposition \ref{g9}, $X_1\otimes X_2$ is a simple complex,
and in particular $X_1\otimes X_2$ has no duplicated faces. By the
assumption of (2), $X$ contains all the faces of $X_1\otimes X_2$.
Conversely, any face $f$ of $X$ is a face of $X_1\otimes X_2$, since
$f$ can be generated by $\pi_{\Gamma_1}(f)$ and $\pi_{\Gamma_2}(f)$.
Then we have $X=X_1\otimes X_2$, and hence (2) implies (1).
\end{proof}

Although we already know the associativity of complex tensor product
through the universal property, it will be helpful to understand how
faces are formed in the product of more than two complexes. First
let us review the product of two complexes. Let $f_\alpha$ be a face
of length $n_\alpha$ attached along a cycle $C_\alpha$ in $X$, and
$f_\beta$ be a face of length $n_\beta$ attached along a cycle
$C_\beta$ in $Y$. By Definition \ref{e5}, $f_\alpha$ and $f_\beta$
generate faces $f_{\alpha,\beta}^{i^\delta}$ of length
$[n_\alpha,n_\beta]$ attached along
$(C_\alpha,C_\beta)_\otimes^{i^\delta}$,
$i\in\{0,1,\ldots,(n_\alpha,n_\beta)-1\}$, $\delta\in\{0,1\}$. To
explain the boundary cycle of $f_{\alpha,\beta}^{i^\delta}$ in plain
language, basically we pick a pair of corners of $f_\alpha$ and
$f_\beta$ to start, and go around $C_\alpha$ and $C_\beta$ in two
coordinates respectively until we return to the starting pair of
corners. Note that the index $i$ is chosen in such a way that each
pair of corners appears exactly once among all faces generated by
$f_\alpha$ and $f_\beta$.

\vspace{2mm}

A good way to visualize this is a slot machine of two reels of
length $[n_\alpha,n_\beta]$, cyclically labeled by the vertices of
$f_\alpha$ and $f_\beta$ respectively. Faces generated by $f_\alpha$
and $f_\beta$ have a one-to-one correspondence with different
combinations of two reels, with flipping allowed for the second
reel. From this aspect, it is easy to see that for face $f_j$ of
length $n_j$ in complex $X_j$, $j\in\{1,2,\ldots,m\}$,
$f_1,f_2,\ldots,f_m$ generate faces in $\otimes_{j=1}^m X_j$ of
length $[n_1,n_2,\ldots,n_m]$ such that each $m$-tuple of corners
appears exactly once among all generated faces. Faces generated by
$f_1,f_2,\ldots,f_m$ have a one-to-one correspondence with different
combinations of $m$ reels of length $[n_1,n_2,\ldots,n_m]$,
cyclically labeled by the vertices of $f_j$ respectively, with
flipping allowed from the second reel on. Figure \ref{figg4}
illustrates how a face is generated by the complex tensor product of
3 faces from such an aspect.

\begin{figure}
\begin{center}
\tikzstyle{place}=[circle,draw=black,inner sep=0pt,minimum size=2mm]
\begin{tikzpicture}[scale=.98]

\node (0) at (0,0) {\Huge $\underline{8}$};

\node (1) at (-.17,1.4) [red] {\Huge\bf \underline{7}};

\node (2) at (0,2.8) {\Huge $\underline{6}$};

\node (3) at (+1.45,0) [red] {\Huge\bf \underline{7}};

\node (4) at (-.17+1.45,1.4) {\Huge $\underline{6}$};

\node (5) at (0+1.45,2.8) {\Huge $\underline{5}$};

\node (6) at (+1.45*2,.1) [rotate=180] {\Huge $\underline{6}$};

\node (7) at (-.17+1.45*2,1.5) [red] [rotate=180] {\Huge\bf
\underline{7}};

\node (8) at (0+1.45*2,2.9) [rotate=180] {\Huge $\underline{8}$};

\draw[double] (-.55,-.62) arc (200:160:6);

\draw[double] (+.9,-.62) arc (200:160:6);

\draw[double] (+2.35,-.62) arc (200:160:6);

\draw[double] (+3.9,-.62) arc (200:160:6);

\end{tikzpicture}
\caption{a face generated by 3 faces in complex tensor
product}\label{figg4}
\end{center}
\end{figure}
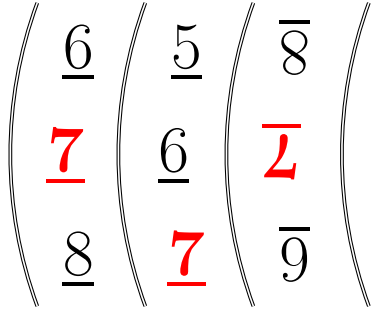

\begin{thm}\label{g11}
Let $X$ be a simple polygonal cell complex. If the 1-skeleton of $X$
is a finite simple connected non-bipartite $R$-thin edge-transitive
graph with more than one vertex, then $X$ has a unique factorization
into prime complexes.
\end{thm}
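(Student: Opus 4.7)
The plan is to reduce the factorization of $X$ to the unique prime factorization of its $1$-skeleton $X^1$ as a graph, and then to identify complex factorizations of $X$ with a sub-semilattice of the partition lattice on the prime graph factors of $X^1$.

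First I would apply Theorem \ref{g3} to $X^1$, which by hypothesis is a finite connected non-bipartite graph in $\mathfrak{S}_0$ with more than one vertex, obtaining the unique prime factorization $X^1 = \Gamma_1 \times \Gamma_2 \times \cdots \times \Gamma_n$ in $\mathfrak{S}_0$. Proposition \ref{g5} forces each $\Gamma_i$ to lie in $\mathfrak{S}$, so here the graph tensor product coincides with the graph direct product. Existence of a prime complex factorization then follows by induction on $|V(X^1)|$: either $X$ is prime, or $X = X_1 \otimes X_2$ nontrivially, in which case Proposition \ref{g9} ensures that each $X_i$ is simple and has strictly fewer vertices, so the process terminates.

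For uniqueness, any complex factorization $X = \bigotimes_{B \in \sigma} X_B$ indexed by the blocks of a partition $\sigma$ of $\{1,\ldots,n\}$ has $X_B^1 \cong \prod_{j \in B}\Gamma_j$ by McKenzie's uniqueness applied to each $X_B^1$. By Proposition \ref{g8}, each $X_B$ is then completely determined by its $1$-skeleton together with the reductive projections of the faces of $X$, so uniqueness of the prime factorization reduces to uniqueness of the associated partition. I would call $\sigma$ \emph{valid} when such a complex factorization exists and then argue that valid partitions are closed under common refinement: if $\sigma$ and $\sigma'$ are valid, so is their meet $\sigma \wedge \sigma'$. Granting meet-closure, the finest valid partition $\sigma_0$ exists; its associated factorization is prime (any proper refinement would contradict minimality), and any other prime factorization of $X$ corresponds to a valid partition admitting no proper refinement among valid partitions, which by meet-closure must coincide with $\sigma_0$.

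The main obstacle is proving the meet-closure. Assuming $X = \bigotimes_{B \in \sigma} X_B = \bigotimes_{B' \in \sigma'} X'_{B'}$, I would verify the natural $n$-fold analogue of Proposition \ref{g10}(2) along $\sigma \wedge \sigma'$: given any tuple $(f_S)_{S \in \sigma \wedge \sigma'}$ of faces of $X$, every face generated by the reductive projections of the $f_S$ to the respective subproducts of the $\Gamma_j$, with each possible alignment, must occur in $X$. The strategy is to produce such a face $F$ by first applying condition (2) for the $\sigma'$-factorization to build a face of $X$ whose projection to each $\Gamma_{B'}$ combines the required data for $S \subseteq B'$, and then appealing to the $\sigma$-factorization to control the orthogonal decomposition. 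The delicate point will be tracking the alignment indices $(i,\delta)$ of Definition \ref{e5} simultaneously through both factorizations, so that every alignment of every $(\sigma \wedge \sigma')$-generated face is actually realized by some face of $X$; this combinatorial book-keeping in the simultaneous slot-machine picture of two different factorizations is where the real work lies.
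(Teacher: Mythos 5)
Your reduction to the graph-level factorization (Theorem \ref{g3} plus Proposition \ref{g5}, then identifying complex factorizations with partitions of the prime graph factors $\Gamma_1,\ldots,\Gamma_n$) matches the paper, and the existence argument is fine. But the uniqueness argument has a genuine gap: everything hinges on your meet-closure claim for valid partitions, and you do not prove it --- you explicitly defer it as ``where the real work lies.'' The strategy you sketch for it (directly constructing, for every tuple of faces and every alignment index $(i,\delta)$, a face of $X$ realizing the corresponding corner combination for the common refinement $\sigma\wedge\sigma'$) is exactly the hard direction, and it is not clear it can be carried out as stated; in particular you would need to realize \emph{all} alignments simultaneously, and nothing in your setup controls which alignments the two given factorizations actually produce. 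As it stands, the proposal reduces the theorem to an unproved combinatorial statement that is at least as hard as the theorem itself.

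The paper avoids this by arguing in the contrapositive and using primality at exactly the right moment. Given two prime factorizations $A$ and $B$ and a prime factor $X_0$ of $A$ with $X_0^1=\Gamma_1\otimes\Gamma_2$ (any two-block grouping of its prime skeleton factors), Proposition \ref{g10} applied to the \emph{prime} complex $X_0$ yields faces $f_1,f_2$ such that $X_0$ \emph{lacks} some face generated by $\pi_{\Gamma_1}(f_1)$ and $\pi_{\Gamma_2}(f_2)$; this missing corner pair persists in the $n$-tuples describing face corners of $X$. If $\Gamma_1$ and $\Gamma_2$ lay in different prime factors of $B$, then Proposition \ref{g8} produces faces of those factors whose tensor product realizes \emph{every} corner pairing of $\pi_{\Gamma_1}(f_1)$ with $\pi_{\Gamma_2}(f_2)$ in $X$ --- a contradiction. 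Hence all prime skeleton factors of $X_0$ lie in a single prime factor $X_0'$ of $B$, and by symmetry $X_0^1=(X_0')^1$, whereupon Proposition \ref{g8} forces $X_0=X_0'$. Note that this only requires showing that a prime block of one factorization cannot be cut by the other --- a strictly weaker statement than meet-closure of all valid partitions, and one for which primality hands you the needed ``missing face'' for free. If you want to salvage your lattice-theoretic framing, you should replace the meet-closure lemma by this refinement statement and prove it via Propositions \ref{g8} and \ref{g10} as above.
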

\begin{proof}
By Theorem \ref{g3}, since $X^1\in\mathfrak{S}\subset\mathfrak{S}_0$
is a finite connected non-bipartite graph with more than one vertex,
$X^1$ has a unique factorization
$X^1=\Gamma_1\times\Gamma_2\times\cdots\times\Gamma_n$ into primes
in $\mathfrak{S}_0$ with respect to direct product of graphs. By
Proposition \ref{g5}, the edge-transitivity of $X^1$ implies that
each prime factor $\Gamma_i$ is in fact a simple graph. On the other
hand, if we factorize $X^1$ with respect to graph tensor product,
each factor would also be a simple graph with more than one vertex,
because a loop creates double edges in the product, and a single
vertex breaks the connectivity of the product. Note that direct
product and graph tensor product coincide in $\mathfrak{S}$. Hence
we know $X^1$ has a unique factorization
$X^1=\Gamma_1\otimes\Gamma_2\otimes\cdots\otimes\Gamma_n$ into
primes in $\mathfrak{S}$ with respect to graph tensor product.

Now we consider the factorization of the complex $X$. Note that we
can always obtain a prime factorization of $X$, since the number of
vertices of factors decreases as the factoring goes. Suppose $X$ has
two factorizations $A$ and $B$, and $X_0$ is a prime factor of $X$
in $A$ with 1-skeleton $\Gamma_1\otimes\Gamma_2$. By Proposition
\ref{g10}, there exist two faces $f_1$ and $f_2$ such that $X_0$
lacks certain face generated by $\pi_{\Gamma_1}(f_1)$ and
$\pi_{\Gamma_2}(f_2)$. In other words, there is certain pair of
corners of $\pi_{\Gamma_1}(f_1)$ and $\pi_{\Gamma_2}(f_2)$ missing
in the faces of $X_0$, and hence such pair will be absent in the
$n$-tuples representing face corners of $X$. By Proposition
\ref{g8}, we can find faces $\overline{f_1}$ and $\overline{f_2}$ of
$X$ such that $\pi_{\Gamma_1\otimes\Gamma_2}(\overline{f_1})=f_1$
and $\pi_{\Gamma_1\otimes\Gamma_2}(\overline{f_2})=f_2$, and we have
$\pi_{\Gamma_1}(\overline{f_1})=\pi_{\Gamma_1}(f_1)$ and
$\pi_{\Gamma_2}(\overline{f_2})=\pi_{\Gamma_2}(f_2)$. If $\Gamma_1$
and $\Gamma_2$ belong to different prime factors $X_1$ and $X_2$ in
$B$, we can reductively project $\overline{f_i}$ to $X_i$
 to obtain a face $f_i'$ of $X_i$, $i\in\{1,2\}$. Then we have
$\pi_{\Gamma_1}(f_1')=\pi_{\Gamma_1}(\overline{f_1})=\pi_{\Gamma_1}(f_1)$
and
$\pi_{\Gamma_2}(f_2')=\pi_{\Gamma_2}(\overline{f_2})=\pi_{\Gamma_2}(f_2)$.
Notice that $f_1'$ and $f_2'$ generate all possible pairs of corners
of $\pi_{\Gamma_1}(f_1)$ and $\pi_{\Gamma_2}(f_2)$ in $X_1\otimes
X_2$ and hence in $X$, a contradiction. So $\Gamma_1$ and $\Gamma_2$
belong to the same prime factor in $B$.

The above argument can be applied to the case when the 1-skeleton of
$X_0$ is the graph tensor product of more than two prime graphs,
simply by splitting prime graph factors into two groups. It follows
that every prime 1-skeleton factor of $X_0$ belongs to the same
prime complex $X_0'$ in $B$. Conversely, every prime 1-skeleton
factor of $X_0'$ belongs to $X_0$, and hence $X_0$ and $X_0'$ are
actually the same. In case $X_0$ has a prime 1-skeleton $\Gamma_j$,
then $\Gamma_j$ belongs to some $X_0'$ in $B$ with a prime
1-skeleton, otherwise the prime 1-skeleton factors of $X_0'$ belong
to at least two complexes in $A$. In conclusion, we know two
factorizations $A$ and $B$ are identical, and $X$ has a unique
factorization into prime complexes.
\end{proof}

\begin{thm}\label{g12}
Suppose that $X$ is a simple polygonal cell complex, and its
1-skeleton is a finite simple connected non-bipartite
edge-transitive $R$-thin graph with more than one vertex. Let
$X=X_1\otimes X_2\otimes\cdots\otimes X_n$ be a prime factorization
of $X$. Then $\Aut(X)$ is generated by automorphisms of prime
factors and permutations of isomorphic factors.
\end{thm}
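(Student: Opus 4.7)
The plan is to reduce the question to the graph-theoretic Theorem~\ref{g4} applied to the $1$-skeleton $X^1$. The crucial bridge will be the injectivity of the restriction homomorphism $r : \Aut(X) \to \Aut(X^1)$. Indeed, if $\phi \in \Aut(X)$ fixes $X^1$ pointwise, then it must permute faces sharing the same boundary cycle; but since $X$ is simple, no two faces share a boundary cycle, so $\phi$ fixes each face setwise, and the radiality condition in Definition~\ref{e7} then forces $\phi$ to be the identity on each face. Thus it suffices to exhibit, for each $\phi \in \Aut(X)$, a Cartesian automorphism $\psi$ in the subgroup described in the statement with $\psi|_{X^1} = \phi|_{X^1}$.

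To produce $\psi$, I would first apply Theorem~\ref{g4} to $X^1$. By hypothesis $X^1 \in \mathfrak{S}$ is finite, connected, non-bipartite, $R$-thin and edge-transitive; Proposition~\ref{g5} guarantees that the $\mathfrak{S}_0$-prime factors of $X^1$ are all loop-free, so direct product and graph tensor product coincide on this factorization. Moreover, the argument in the proof of Theorem~\ref{g11} identifies these prime graph factors with the $1$-skeletons $\Gamma_i = X_i^1$ of the complex prime factors $X_i$ of $X$. Theorem~\ref{g4} then lets me decompose $\phi|_{X^1} = \tau \circ (\rho_1 \otimes \cdots \otimes \rho_n)$, where $\tau$ permutes isomorphic factors $\Gamma_i$ and each $\rho_i \in \Aut(\Gamma_i)$.

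The next step is to lift this decomposition to a Cartesian complex automorphism. For each face $f$ of $X$, $\phi(f)$ is again a face, and by Propositions~\ref{g8} and~\ref{g10} both $f$ and $\phi(f)$ are determined by the $n$-tuples of their reductive projections to the $X_i$'s. Chasing through the corner-tuple correspondence described in the remark after Definition~\ref{e5} (and the ``slot machine'' picture preceding Theorem~\ref{g11}), one verifies that $\rho_i$ must carry the boundary cycle of every face of $X_i$ to the boundary cycle of a face of $X_{\tau(i)}$, so $\rho_i$ extends uniquely to a complex isomorphism $X_i \to X_{\tau(i)}$, and in particular $\tau$ identifies only complex-isomorphic factors. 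Assembling these extensions yields a Cartesian automorphism $\psi$ with $\psi|_{X^1} = \phi|_{X^1}$; injectivity of $r$ then finishes the argument.

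The main obstacle is this lifting step. \emph{A priori} the graph automorphism group $\Aut(\Gamma_i)$ can be strictly larger than the image of $\Aut(X_i)$, so it is not automatic that a factor $\rho_i$ emerging from Theorem~\ref{g4} comes from a complex automorphism of $X_i$, nor that $\tau$ respects the face structure. What rescues the argument is that the $\rho_i$'s are not arbitrary: they arise from a genuine complex automorphism $\phi$ of the whole product $X$, and the face structure of $X$, combined with the uniqueness of reductive projections in a simple complex, pins down exactly how $\rho_i$ must act on boundary cycles of faces of $X_i$ and which $\Gamma_i$'s can be permuted by $\tau$.
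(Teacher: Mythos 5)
Your overall strategy (use simplicity to embed $\Aut(X)$ into $\Aut(X^1)$, apply Theorem~\ref{g4} to the skeleton, then lift back) is the same as the paper's, and your injectivity argument for the restriction map is fine. But there is a genuine gap at the point where you write that ``the argument in the proof of Theorem~\ref{g11} identifies these prime graph factors with the $1$-skeletons $\Gamma_i=X_i^1$ of the complex prime factors.'' That identification is false in general: a prime \emph{complex} factor $X_i$ can perfectly well have a $1$-skeleton that decomposes further as a tensor product of several prime \emph{graphs} (the face structure, not the skeleton, is what obstructs further complex factorization --- this is exactly why Proposition~\ref{g10} has a nontrivial condition (2), and why the proof of Theorem~\ref{g11} explicitly treats a prime complex factor $X_0$ with skeleton $\Gamma_1\otimes\Gamma_2$). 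So Theorem~\ref{g4} hands you a decomposition of $\phi|_{X^1}$ indexed by the $m$ prime graph factors of $X^1$, with $m$ possibly strictly larger than $n$, and the permutation $\tau$ it produces permutes isomorphic prime \emph{graphs}, not prime \emph{complexes}.

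Consequently the real content of the theorem --- which you correctly flag as ``the main obstacle'' but do not actually supply --- is to show that $\phi$ must map the set of prime graph factors making up $X_i^1$ onto the set making up $X_k^1$ for a single prime complex factor $X_k$, i.e.\ that $\tau$ refines to a permutation of the blocks. ``Chasing through the corner-tuple correspondence'' does not do this by itself. The paper's argument is a contradiction: if $\phi$ split the skeleton factors of a prime $X_1$ across two different prime complex factors, then primeness of $X_1$ together with Proposition~\ref{g10} gives faces $f_1,f_2$ of $X_1$ such that some face generated by $\pi_{\Gamma_\alpha}(f_1)$ and $\pi_{\Gamma_\beta}(f_2)$ is missing; Proposition~\ref{g8} lifts these to faces of $X$, so a certain corner combination is absent from the $m$-tuples representing corners of $X$; applying $\phi$, the same combination would have to be absent in coordinates that now lie in \emph{different} prime complex factors of $X$, whose tensor product realizes every corner combination --- contradiction. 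Without this step (or an equivalent one) your lift of $\phi$ to a Cartesian automorphism is not justified, so as written the proposal does not constitute a proof.
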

\begin{proof}
Since $X$ has no faces attached along the same cycle, an
automorphism of $X$ is completely determined by its action on the
1-skeleton $X^1$, and we can identify $\Aut(X)$ as a subgroup of
$\Aut(X^1)$. To understand $\Aut(X^1)$, by the argument in the proof
of Theorem \ref{g11}, we know $X^1$ has a unique factorization
$X^1=\Gamma_1\times\Gamma_2\times\cdots\times\Gamma_m=\Gamma_1\otimes\Gamma_2\otimes\cdots\otimes\Gamma_m$
into primes in $\mathfrak{S}$. By Theorem \ref{g4}, the extra
$R$-thin condition on $X^1$ implies that $\Aut(X^1)$ is generated by
automorphisms of $\Gamma_i$'s and permutations of isomorphic
$\Gamma_j$'s.

Let $\varphi$ be an arbitrary automorphism of $X$, which can be
represented as some $\rho\in\times_{i=1}^m\Aut(\Gamma_i)$ followed
by a permutation of $\Gamma_j$'s. This implies that for any face $f$
of $X$
$$\varphi\big(\pi_{\otimes_{i\in I}\Gamma_i}(f)\big)=\pi_{\varphi(\otimes_{i\in
I}\Gamma_i)}\big(\varphi(f)\big)=\pi_{\otimes_{i\in
I}\varphi(\Gamma_i)}\big(\varphi(f)\big),$$ where $I$ is an
arbitrary non-empty subset of $\{1,2,\ldots,m\}$. Suppose that $X_1$
has 1-skeleton $X_1^1=\otimes_{i\in I}\Gamma_i$ for some
$I\subset\{1,2,\ldots,m\}$. We claim that $\forall i\in I$,
$\varphi(\Gamma_i)$ belongs to the same prime factor $X_k$ of $X$.
If not, then we can find $I_1\sqcup I_2=I$ such that $\forall i\in
I_1,\forall j\in I_2$, $\varphi(\Gamma_i)$ and $\varphi(\Gamma_j)$
belong to different prime factors of $X$. Let
$\Gamma_\alpha=\otimes_{i\in I_1}\Gamma_i$ and
$\Gamma_\beta=\otimes_{j\in I_2}\Gamma_j$, and hence we have
$X_1^1=\Gamma_\alpha\otimes\Gamma_\beta$. Since $X_1$ is prime, by
Proposition \ref{g10}, we can find faces $f_1$ and $f_2$ of $X_1$
such that $X_1$ lacks certain face generated by
$\pi_{\Gamma_\alpha}(f_1)$ and $\pi_{\Gamma_\beta}(f_2)$. By
Proposition \ref{g8}, we can find faces $\overline{f_1}$ and
$\overline{f_2}$ of $X$ such that
$\pi_{\Gamma_\alpha\otimes\Gamma_\beta}(\overline{f_1})=f_1$ and
$\pi_{\Gamma_\alpha\otimes\Gamma_\beta}(\overline{f_2})=f_2$. Then
the complex $X$ lacks certain corner combination of
$\pi_{\Gamma_\alpha}(\overline{f_1})$ and
$\pi_{\Gamma_\beta}(\overline{f_2})$ in the $m$-tuples representing
face corners of $X$. By taking the automorphism $\varphi$, the
complex $X$ lacks certain corner combination of $\pi_{\otimes_{i\in
I_1}\varphi(\Gamma_i)}(\varphi(\overline{f_1}))$ and
$\pi_{\otimes_{j\in
I_2}\varphi(\Gamma_j)}(\varphi(\overline{f_2}))$, which is
impossible because $\varphi(\Gamma_i)$ and $\varphi(\Gamma_j)$
belong to different prime factors of $X$, and taking complex tensor
product of these factors generates all the corner combinations.

Hence for every 1-skeleton factor $\Gamma_i$ of $X_1$,
$\varphi(\Gamma_i)$ belongs to the same prime factor $X_k$ of $X$.
By considering $\varphi^{-1}$, we know that $X_k$ has exactly these
$\varphi(\Gamma_i)$'s as 1-skeleton factors. Moreover,
$\varphi\big(\pi_{\otimes_{i\in
I}\Gamma_i}(f)\big)=\pi_{\otimes_{i\in
I}\varphi(\Gamma_i)}\big(\varphi(f)\big)$ implies that $\varphi$
induces an isomorphism from $X_1$ to $X_k$. This shows that every
$\varphi\in\Aut(X)$ can be represented as some
$\sigma\in\times_{i=1}^n\Aut(X_i)$ followed by a permutation of
$X_j$'s, and the theorem holds.
\end{proof}
\begin{rem}
Let $\widetilde{X}$ be the disjoint union of prime factors of $X$.
Then the above theorem implies that $\Aut(X)\cong
\Aut(\widetilde{X})$, which is a convenient way to describe
$\Aut(X)$.
\end{rem}

The following corollary is a partial converse of Theorem \ref{e9}.

\begin{cor}\label{g13}
Suppose that $X$ is a simple polygonal cell complex, and its
1-skeleton is a finite simple connected non-bipartite
edge-transitive $R$-thin graph with more than one vertex. If $X$ is
flag-transitive, then any factor of $X$ is flag-transitive.
\end{cor}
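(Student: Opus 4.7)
The hypotheses are precisely those of Theorem \ref{g12}, so by uniqueness of prime factorization we may write $X=X_1\otimes\cdots\otimes X_n$ with $\Aut(X)$ generated by $\prod_i\Aut(X_i)$ together with permutations of isomorphic prime factors. By Proposition \ref{g8} (together with the uniqueness of $X^1$'s factorization in $\mathfrak{S}$) every factor of $X$ is the tensor product of some subcollection of the $X_i$'s, so Theorem \ref{e9} reduces the task to showing each prime factor is flag-transitive. By symmetry I fix attention on $X_1$ and choose two arbitrary flags $F,F'$ of $X_1$.

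The plan is to lift $F$ and $F'$ to flags of $X$ that agree outside the $X_1$-slot, apply flag-transitivity of $X$, and then extract an automorphism of $X_1$ taking $F$ to $F'$. For each index $j$ with $X_j\cong X_1$, fix once and for all an isomorphism $\iota_j\colon X_1\to X_j$ with $\iota_1=\mathrm{id}$; for each $j$ with $X_j\not\cong X_1$, pick any flag $H_j$ of $X_j$. Using the corner-pairing description of tensor-product faces from the proofs of Theorems \ref{e8} and \ref{e9}, I assemble a flag $\widetilde{F}$ of $X$ whose projection to $X_j$ is $\iota_j(F)$ in the slots with $X_j\cong X_1$ and $H_j$ in the remaining slots (with orientation bits chosen consistently); replacing $F$ by $F'$ throughout produces a second flag $\widetilde{F'}$.

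By flag-transitivity of $X$ there exists $\varphi\in\Aut(X)$ with $\varphi(\widetilde{F})=\widetilde{F'}$. By Theorem \ref{g12} I decompose $\varphi=\sigma\circ\tau$, where $\sigma=(\sigma_1,\ldots,\sigma_n)\in\prod_i\Aut(X_i)$ and $\tau$ is a permutation of isomorphic factors implemented by the chosen $\iota_j$. Letting $k=\tau^{-1}(1)$, so that $X_k\cong X_1$, the $X_1$-coordinate of $\varphi(\widetilde{F})$ is obtained by transporting the $X_k$-coordinate $\iota_k(F)$ of $\widetilde{F}$ back to $X_1$ via $\iota_k^{-1}$ (yielding $F$) and then applying $\sigma_1$; this must equal the $X_1$-coordinate of $\widetilde{F'}$, which is $F'$. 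Hence $\sigma_1\in\Aut(X_1)$ satisfies $\sigma_1(F)=F'$, proving flag-transitivity of $X_1$.

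The main obstacle is the combinatorial bookkeeping: one must confirm both that a tuple of flags of the factors really does assemble into a flag of $X$ once orientation bits are pinned down, and that the prescribed isomorphisms $\iota_j$ are compatible with the permutation $\tau$ coming from Theorem \ref{g12}, so that the coordinate chase in the final paragraph returns exactly $F$ before $\sigma_1$ is applied. Both points are resolved by the corner-pairing picture already developed in Section \ref{ch_CTP}, which is why arranging matching flags in every $X_j\cong X_1$ makes the permutation act invisibly on the relevant coordinate.
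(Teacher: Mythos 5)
Your proposal is correct and follows essentially the same route as the paper: reduce to prime factors via unique factorization (Theorem \ref{g11}) and Theorem \ref{e9}, lift the two flags of $X_1$ to flags of $X$ that carry matching data in every slot isomorphic to $X_1$, and invoke Theorem \ref{g12} so that the permutation of isomorphic factors cannot disturb the $X_1$-coordinate. The only cosmetic difference is that you argue directly (extracting $\sigma_1$ with $\sigma_1(F)=F'$) whereas the paper argues by contradiction, exhibiting two corners that no Cartesian automorphism can match.
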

\begin{proof}

Note that it suffices to show that any prime factor of $X$ is
flag-transitive. Then by Theorem \ref{g11} and Theorem \ref{e9}, any
factor of $X$ is a complex tensor product of flag-transitive prime
factors of $X$, and hence is flag-transitive.

By Theorem \ref{g11}, $X$ has a unique prime factorization
$X=X_1\otimes X_2\otimes\cdots\otimes X_n$. Suppose that one of the
prime factors is not flag-transitive, without loss of generality say
$X_1$, and $X_i$ is isomorphic to $X_1$ if and only of $1\le i\le m$
for some integer $m\le n$. Since $X_1$ is not flag-transitive, there
exist two oriented face corners $(e^1_1,v_1,e^2_1)$ and
$(e^{1'}_1,v'_1,e^{2'}_1)$ in $X_1$ such that $\Aut(X_1)$ can not
map one corner to the other. For each $j$ such that $m+1\le j\le n$,
we pick an arbitrary corner $(e^1_j,v_j,e^2_j)$ of $X_j$. Consider
the following two corners of $X$:
$$((e^1_1,\ldots,e^1_1,e^1_{m+1},\ldots,e^1_n),(v_1,\ldots,v_1,v_{m+1},\ldots,v_n),(e^2_1,\ldots,e^2_1,e^2_{m+1},\ldots,e^2_n))\text{ and}$$
$$((e^{1'}_1,\ldots,e^{1'}_1,e^1_{m+1},\ldots,e^1_n),(v'_1,\ldots,v'_1,v_{m+1},\ldots,v_n),(e^{2'}_1,\ldots,e^{2'}_1,e^2_{m+1},\ldots,e^2_n)).$$
By Theorem \ref{g12}, $\Aut(X)$ is generated by automorphisms of
prime factors and permutation of isomorphic factors. In particular,
it is impossible for $\Aut(X)$ to map one of the above corners to
the other, contradicting to the flag-transitivity of $X$. Therefore
we can conclude that any prime factor of $X$ is flag-transitive.
\end{proof}

The corollary below answers the question we posed in the beginning
of the chapter.

\begin{cor}\label{g14}
For $i\in\{1,2,\ldots,n\}$, let $X_i$ be a simple prime complex with
a finite simple connected non-bipartite symmetric $R$-thin
1-skeleton having more than one vertex. Then the complex tensor
product $X=\otimes_{i=1}^n X_i$ has automorphism group $\Aut(X)$
generated by $\Aut(X_i)$'s and permutations of isomorphic $X_j$'s.
\end{cor}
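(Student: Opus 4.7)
The plan is to deduce this corollary directly from Theorem \ref{g12}, using Theorem \ref{g11} to identify the prime factorization. First I would show that $X = \otimes_{i=1}^n X_i$ itself satisfies every hypothesis of Theorem \ref{g12}. Iterating Proposition \ref{g9} gives that $X$ is a simple complex. Since each $X_i^1$ lies in $\mathfrak{S}$ and graph tensor product coincides with direct product on $\mathfrak{S}$, the 1-skeleton $X^1 = \otimes_{i=1}^n X_i^1$ is again a finite simple graph with more than one vertex.

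Next I would verify that $X^1$ inherits the remaining graph-theoretic properties from its factors. Connectedness and non-bipartiteness follow by iterating Weichsel's theorem (Theorem \ref{bf}) together with the observation that the tensor product of two connected non-bipartite simple graphs is itself non-bipartite: given odd closed walks of lengths $n_1,n_2$ at chosen basepoints of $\Gamma_1,\Gamma_2$, repeating them $n_2$ and $n_1$ times respectively yields odd closed walks of equal odd length, which (by Corollary \ref{e4}) lift to an odd closed walk in the product. For edge-transitivity, the symmetry (arc-transitivity) of each $X_i^1$ means that $\prod_i \Aut(X_i^1)$ acts arc-transitively, coordinate by coordinate, on $X^1$, hence transitively on its edges. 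For $R$-thinness, the neighbourhood of $(v_1,\ldots,v_n)$ in $X^1$ is the Cartesian product $\prod_i N_{X_i^1}(v_i)$, so two vertices with identical neighbourhoods must agree coordinate-wise by the $R$-thinness of each factor.

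With these checks done, Theorem \ref{g12} applies to $X$ and gives that $\Aut(X)$ is generated by automorphisms of the prime factors of $X$ together with permutations of isomorphic prime factors. By Theorem \ref{g11}, $X$ admits a unique prime factorization; since each $X_i$ is already prime by hypothesis, this unique factorization must be precisely $X = \otimes_{i=1}^n X_i$. Thus the ``prime factors'' in the conclusion of Theorem \ref{g12} are exactly the given $X_i$'s, and the claimed description of $\Aut(X)$ follows.

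The whole argument is essentially bookkeeping: the main (and only mildly delicate) step will be showing that non-bipartiteness, edge-transitivity, and especially $R$-thinness are preserved under the iterated tensor product, but each is short once the definitions are unpacked and the hypotheses on the factors are used.
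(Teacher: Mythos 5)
Your proposal is correct and follows essentially the same route as the paper: verify that $X$ is simple (Proposition \ref{g9}) and that $X^1$ is a finite simple connected non-bipartite edge-transitive $R$-thin graph (the paper handles non-bipartiteness via odd cycles, connectedness via Theorem \ref{bf}, edge-transitivity by citing the face-free case of Theorem \ref{e9} rather than your direct coordinate-wise arc-transitivity argument, and $R$-thinness via the same product-of-neighbourhoods observation), then apply Theorem \ref{g12}. Your extra appeal to Theorem \ref{g11} for uniqueness of the factorization is harmless but not needed, since Theorem \ref{g12} already applies to the given prime factorization $X=\otimes_{i=1}^n X_i$.
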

\begin{proof}
By Proposition \ref{g9}, we know $X$ is a simple complex. By the
definition of graph tensor product, we know $X^1$ is a finite simple
graph. Note that a simple graph is non-bipartite if and only if
there is a cycle of odd length. Then the graph tensor product of two
non-bipartite graphs contains a cycle of odd length and hence is
non-bipartite. Induction shows that $X^1$ is non-bipartite, and by
Theorem \ref{bf} we know that $X^1$ is connected. By the special
case of Theorem \ref{e9} (complexes without faces), we know $X^1$ is
symmetric and hence edge-transitive. Note that for two graphs
$\Gamma_1$ and $\Gamma_2$, the set of neighbours of a vertex
$(u,v)\in V(\Gamma_1\otimes\Gamma_2)$ is the direct product of the
set of neighbours of $u$ in $\Gamma_1$ with the set of neighbours of
$v$ in $\Gamma_2$. This implies the graph tensor product of $R$-thin
graphs is a $R$-thin graph. To summarize, we know $X$ is a simple
complex with a prime factorization $X=\otimes_{i=1}^n X_i$, and its
1-skeleton $X^1$ is a finite simple connected non-bipartite
edge-transitive $R$-thin graph with more than one vertex. By Theorem
\ref{g12}, we know that $\Aut(X)$ is as described in the corollary.
\end{proof}
\begin{rem}
The tensor products of edge-transitive graphs are not necessarily
edge-transitive. Therefore we require each $X_i^1$ to be symmetric
to ensure the edge-transitivity of $X^1$.
\end{rem}

Note that when a complex has a face of odd length, then the
1-skeleton of the complex is non-bipartite, and Corollary \ref{g14}
has a chance to work. In the next chapter, we will investigate the
automorphism group of the tensor product of complexes with only
faces of even lengths from a different aspect.

\section{Even Cases}\label{ch_ts2}

In this chapter we investigate the tensor product of complexes with
only faces of even lengths, and our goal is to develop results
similar to Corollary \ref{g14}, which basically says an automorphism
of certain complex tensor products must be of Cartesian type. Note
that when there is more than one bipartite factor, Theorem \ref{bf}
implies that the complex tensor product is disconnected, and the
product is likely to have non-Cartesian automorphisms from the
direct product of automorphism groups of components. Hence in such a
context, the proper question to pose should be as follows: for
complexes $X_i$ with only faces of even lengths, is the automorphism
group of a component of $\otimes X_i$ generated by automorphisms of
$X_i$'s together with permutations of isomorphic factors?

For graph tensor products, the connectedness of the product does not
guarantee the absence of non-Cartesian automorphisms. For complex
tensor products, we hope that the extra face structure helps to
eliminate non-Cartesian automorphisms. For example, let us look at
the complex tensor product of two squares, which has two isomorphic
components. We denote vertices of a square by $0,1,2,-1$ cyclically,
and illustrate one component of the product in Figure \ref{figh1}.
Note that the 1-skeleton of the component is actually a complete
bipartite graph with $2\cdot4!\cdot4!$ automorphisms, and not all of
them give a complex automorphism due to the extra face structure.

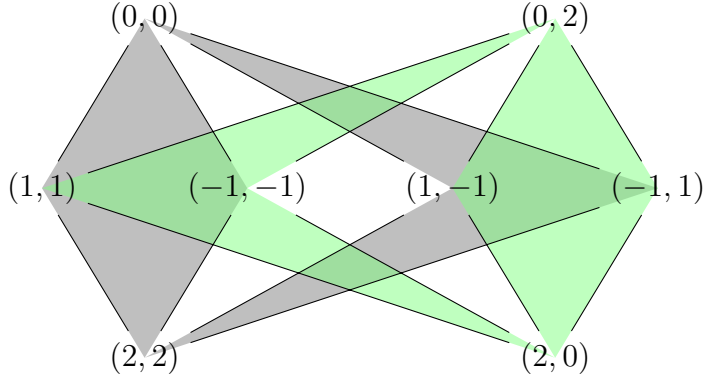
\begin{figure}
\begin{center}
\tikzstyle{place}=[circle,draw=black,inner sep=0pt,minimum size=2mm]
\begin{tikzpicture}[xscale=.9,yscale=.75]

\path [fill=lightgray] (0,0)--(1.5,-3)--(3,0)--(1.5,3)--cycle;

\path [fill=lightgray] (6,0)--(1.5,-3)--(9,0)--(1.5,3)--cycle;

\path [fill=green!50,opacity=.5]
(6,0)--(7.5,-3)--(9,0)--(7.5,3)--cycle;

\path [fill=green!50,opacity=.5]
(0,0)--(7.5,-3)--(3,0)--(7.5,3)--cycle;

\node (1) at (0,0) {($1,1)$};

\node (5) at (1.5,3) {$(0,0)$};

\node (7) at (1.5,-3) {$(2,2)$};

\path (1) ++(3,0) node (2){$(-1,-1)$} ++(3,0) node (3){$(1,-1)$}
++(3,0) node (4){$(-1,1)$};

\path (5)--++(6,0) node (6){$(0,2)$};

\path (7)--++(6,0) node (8){$(2,0)$};

\draw (5)--(1) (5)--(2) (5)--(3) (5)--(4) (6)--(1) (6)--(2) (6)--(3)
(6)--(4) (7)--(1) (7)--(2) (7)--(3) (7)--(4) (8)--(1) (8)--(2)
(8)--(3) (8)--(4);

\end{tikzpicture}
\caption{a component of the tensor product of two
squares}\label{figh1}
\end{center}
\end{figure}

Figure \ref{figh1} also reveals an important fact of the tensor
product of complexes with only faces of even lengths: a face is
antipodally attached to another face generated by the same pair of
faces, and through such antipodally attached relation we can find
all other faces generated by the same pair of faces in that
component. Such face blocks (defined in Definition \ref{h4}) help to
determine the Cartesian structure of a complex tensor product, and
if we can show a generic face block has only Cartesian
automorphisms, then we have a chance to force a complex automorphism
stabilizing a face block to be of Cartesian type. To simplify the
problem, we restrict our discussion to the tensor product of
complexes with faces of the same even length, and the first step is
to establish the Cartesian result for the tensor product of
$2n$-gons. The following lemma is a useful tool for this purpose.

\begin{lem}\label{h1}
Suppose on a real line, someone wants to take $d$ steps to walk from
an integer $d-2k$ to $0$, where $\lfloor\frac{d}{2}\rfloor\ge k\ge
0$ is an integer, and each step is either plus 1 or minus 1. Then
there are ${d-1\choose k}$ ways to arrive from 1, and ${d-1\choose
k-1}$ ways to arrive from $-1$. The ratio ${d-1\choose
k}/{d-1\choose k-1}$ is greater than or equal to 1, with equality if
and only if $d-2k=0$. Moreover, when $d$ is fixed and $k$ is
increasing, the ratio is decreasing.
\end{lem}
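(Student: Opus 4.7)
The plan is to prove the lemma by a direct counting argument, reducing each count to a binomial coefficient via the step-parity identity, and then analyze the ratio $(d-k)/k$.

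First, I would establish the two counts. A walk of $d$ steps of $\pm 1$ starting at $d-2k$ and ending at $0$ has net displacement $-(d-2k)$, so if $p$ is the number of $+1$ steps and $m$ the number of $-1$ steps, then $p+m=d$ and $p-m = -(d-2k)$, giving $p=k$ and $m=d-k$. To arrive at $0$ from $1$ the last step must be a $-1$, so I count walks of $d-1$ steps from $d-2k$ to $1$: these need $k$ plus-steps and $d-1-k$ minus-steps, yielding $\binom{d-1}{k}$. Similarly, arriving from $-1$ forces the last step to be $+1$, and the first $d-1$ steps must go from $d-2k$ to $-1$, requiring $k-1$ plus-steps and $d-k$ minus-steps, giving $\binom{d-1}{k-1}$.

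Next I would compute the ratio directly:
\[
\frac{\binom{d-1}{k}}{\binom{d-1}{k-1}} = \frac{(k-1)!\,(d-k)!}{k!\,(d-1-k)!} = \frac{d-k}{k}.
\]
The hypothesis $0 \le k \le \lfloor d/2 \rfloor$ gives $2k \le d$, hence $d-k \ge k$, so the ratio is at least $1$. Equality holds precisely when $d-k=k$, i.e., $d-2k=0$, which is the stated case.

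Finally, for monotonicity with $d$ fixed, I observe that as $k$ increases by $1$ the numerator $d-k$ decreases and the denominator $k$ increases, so the quotient $(d-k)/k$ is strictly decreasing in $k$ throughout the admissible range. There is no real obstacle here; the only subtle point is making sure that $k=0$ and $k=\lfloor d/2\rfloor$ are handled correctly — at $k=0$ the ``arrive from $-1$'' count is $\binom{d-1}{-1}=0$, which matches the fact that the walk from $d$ to $0$ with all minus steps can only arrive from $1$, while at $k=\lfloor d/2\rfloor$ (for $d$ even) the ratio is exactly $1$, as the equality condition predicts.
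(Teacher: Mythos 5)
Your proof is correct and follows essentially the same route as the paper: the same step-count decomposition ($k$ plus-steps, $d-k$ minus-steps) giving the two binomial coefficients, followed by an analysis of their ratio. Your explicit simplification of the ratio to $(d-k)/k$ is a slightly cleaner way to obtain the inequality, the equality condition, and the monotonicity in one stroke, whereas the paper handles these by separate case analysis on the parity of $d$ and a product inequality, but the underlying argument is the same.
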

\begin{proof}
Suppose this person takes $x$ steps of minus 1 and $y$ steps of plus
1 to arrive at 0. Then we have $x+y=d$ and $-x+y=-d+2k$, and
therefore $x=d-k$ and $y=k$. By ordering two types of steps
arbitrarily, we can obtain all different ways to arrive at 0. To
arrive from $1$, the last step must be minus 1, and there are
${d-1\choose k}$ such combinations. To arrive from $-1$, the last
step must be plus 1, and there are ${d-1\choose k-1}$ such
combinations. When $d$ is odd, we have $\frac{d-1}{2}\ge k$ and
hence ${d-1\choose k}>{d-1\choose k-1}$. When $d$ is even, we have
$\frac{d}{2}\ge k$ which implies $\frac{d-1}{2}>k-1$ and hence
${d-1\choose k}\ge{d-1\choose k-1}$, with equality if and only if
$k+(k-1)=d-1$, namely $d-2k=0$. To show that the ratio ${d-1\choose
k}/{d-1\choose k-1}$ decreases as $k$ increases, we simply have to
verify the following inequality:
\renewcommand{\arraystretch}{1.5}
$$
\begin{array}{lc}
 & {d-1 \choose k}/{d-1 \choose k-1}>{d-1 \choose k+1}/{d-1 \choose k} \\
\Leftrightarrow & {d-1 \choose k}{d-1 \choose k}>{d-1
\choose k+1}{d-1 \choose k-1} \\
\end{array}$$
$$\begin{array}{lc}
\Leftrightarrow & \frac{(d-1)\ldots(d-k)}{k!}\frac{(d-1)\ldots(d-k)}{k!}>\frac{(d-1)\ldots(d-k-1)}{(k+1)!}\frac{(d-1)\ldots(d-k+1)}{(k-1)!}\\
\Leftrightarrow & \frac{d-k}{k}>\frac{d-k-1}{k+1} \\
\Leftrightarrow & \frac{d}{k}-1>\frac{d}{k+1}-1 \\
\Leftrightarrow & k+1>k, \\
\end{array}$$

\noindent which is obviously true.
\end{proof}

\begin{prop}\label{h2}
For $i\in\{1,2,\ldots,m\}$, let $C_i$ be a graph which is a cycle of
length $2n$, where $n$ is an integer at least 3. Then the
automorphism group of a component of $\otimes_{i=1}^m C_i$ can be
generated by elements of $\Aut(C_i)$'s together with permutations of
$C_i$'s.
\end{prop}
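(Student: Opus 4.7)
The plan is to work with a single component $K$ of $\otimes_{i=1}^m C_i$, identify its vertices with tuples $(a_1,\ldots,a_m)\in(\mathbb{Z}/2n\mathbb{Z})^m$ so that each edge corresponds to changing every coordinate by $\pm 1$, and then show that any $\varphi\in\Aut(K)$ must act coordinate-wise up to a permutation of factors. The Cartesian subgroup of $\Aut(K)$ is generated by the factor-wise action of $\prod_i\Aut(C_i)$ (restricted to the component) together with the permutations of isomorphic $C_i$'s, and the goal is to show that this subgroup exhausts all of $\Aut(K)$.

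The principal tool will be walk counting combined with Lemma \ref{h1}. Since a walk in a tensor product of graphs is exactly a tuple of walks in the factors, the number $W_d(x,y)$ of length-$d$ walks between two vertices factors as $\prod_{i=1}^m w_d^{C_i}(x_i,y_i)$. As long as $d<n$, no length-$d$ walk in $C_{2n}$ wraps around the cycle, so $w_d^{C_{2n}}(x_i,y_i)$ equals the number of $\pm 1$-walks in $\mathbb{Z}$ from $x_i$ to $y_i$, which is precisely a binomial coefficient of the form appearing in Lemma \ref{h1}. Since walk counts are graph invariants, they are preserved by any $\varphi\in\Aut(K)$.

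The first main step is local. Fix $v\in K$ and look at its $2^m$ neighbours; these correspond bijectively to sign vectors $(\epsilon_1,\ldots,\epsilon_m)\in\{\pm 1\}^m$ via $u=v+(\epsilon_1,\ldots,\epsilon_m)$. For a well chosen far vertex $w$ and length $d<n$, I would decompose $W_d(w,v)$ according to the penultimate vertex $u$: this assigns to each neighbour $u$ the weight $W_{d-1}(w,u)$, which by the product formula and Lemma \ref{h1} depends explicitly, coordinate by coordinate, on whether the last step in coordinate $i$ was $+1$ or $-1$. The strict monotonicity of the ratios $\binom{d-1}{k}/\binom{d-1}{k-1}$ given by the lemma, as $d$ and $w$ vary, produces enough numerical data to single out the $m$ coordinate axes and their two antipodal orientations. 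This forces $\varphi$ to permute the neighbours of $v$ by an element of the hyperoctahedral group $\mathbb{Z}_2\wr S_m$, i.e., a coordinate permutation composed with possible antipodal flips on individual coordinates.

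The final step is to propagate this local Cartesian identification from $v$ to the whole of $K$ and verify global consistency. Using the connectedness of $K$, I would transport the identification along edges and re-run the local argument at each adjacent vertex; preservation of walk counts forces the same permutation of factors at every vertex and a globally consistent sign pattern, and then $\varphi$ is realised as an element of $\prod_i\Aut(C_i)$ followed by a permutation of the $C_i$'s. The main obstacle is exactly this global consistency: \emph{a priori} the permutation of coordinates recovered at $v$ could differ from the one recovered at a neighbour, and ruling this out is where the hypothesis $n\geq 3$ is essential, since it guarantees that $C_{2n}$ is $R$-thin and therefore that distinct coordinate directions produce genuinely distinct walk-count signatures. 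Once global consistency is established, $\varphi$ is a Cartesian automorphism, and the proposition follows.
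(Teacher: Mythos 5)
Your proposal correctly identifies the key tool (Lemma \ref{h1} applied to coordinate-wise factored walk/geodesic counts) and the right local picture (the $2^m$ neighbours of a fixed vertex $v$ carrying a hyperoctahedral structure), but the step you yourself flag as ``the main obstacle'' --- global consistency of the coordinate identification as you move from vertex to vertex --- is precisely the heart of the proof, and your proposal only asserts that ``preservation of walk counts forces'' it without giving an argument. As written, the local step tells you that $\varphi$ (after reducing to the stabilizer $G_v$, a reduction you should make explicit) acts on $N(v)$ by an element of $\mathbb{Z}_2\wr S_m$; composing with the Cartesian automorphism realizing the inverse, you are left needing to show that an automorphism fixing $v$ \emph{and every neighbour of $v$} is the identity. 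Re-running the local argument at a neighbour $u$ only constrains $\varphi|_{N(u)}$ to a signed permutation fixing the vertex $v\in N(u)$, which still leaves a subgroup of order $m!$ unaccounted for, so the propagation does not close up by itself.

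The paper handles this by proving exactly that rigidity statement directly: for \emph{every} vertex $u=(a_1,\dots,a_m)$ at distance $d$ from $v$, the ratio of the numbers of $u$--$v$ geodesics arriving at $v$ through the neighbour with $i$-th coordinate $+1$ versus $-1$ equals $\binom{d-1}{k_i}/\binom{d-1}{k_i-1}$ with $a_i=d-2k_i$, and the strict monotonicity in Lemma \ref{h1} recovers $a_i$ (with separate treatment of $a_i=0$ and $a_i=n$, which is where $n\ge 3$ enters --- not via $R$-thinness as you suggest, although your observation that $C_{2n}$ is $R$-thin exactly when $n\ge3$ is correct). With rigidity in hand the paper then avoids any global consistency argument entirely by a cardinality count: the neighbours of $v$ differing in exactly one coordinate are recognized as the pairs with $2^{m-1}$ common neighbours (no far vertex $w$ needed), so $G_v$ embeds into $\Aut(Q_m)$, giving $|G_v|\le 2^m\cdot m!$, which is already attained by the Cartesian automorphisms fixing $v$. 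If you want to salvage your route, you should either carry out the propagation as an induction on distance from $v$ (which will essentially reproduce the rigidity argument) or adopt the rigidity-plus-counting structure; in either case the missing content is the same geodesic-counting argument applied at every vertex, not just near $v$.
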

\begin{proof}
We denote vertices of $C_i$ by
$0,1,\ldots,n-1,n,-(n-1),-(n-2),\ldots,-1$ cyclically, and let
$\Gamma$ be the component of $\otimes_{i=1}^m C_i$ containing the
vertex $v=(0,0,\ldots,0)$. Note that $\times_{i=1}^m \Aut(C_i)$ acts
transitively on vertices of $\otimes_{i=1}^m C_i$. Therefore to
prove this proposition, it suffices to show that the $v$-stabilizer
$G_v$ of $\Aut(\Gamma)$ can be generated by elements of
$\Aut(C_i)$'s together with permutations of $C_i$'s. Notice that
there are $2^m\cdot m!$ Cartesian automorphisms of $\Gamma$ fixing
$v$, generated by the reflection fixing $0$ in each $C_i$ and all
permutations of $m$ factors. If we can show $|G_v|\le 2^m\cdot m!$,
then the proposition follows.

First we show that $\Gamma$ is a rigid graph. Namely we want to show
that if $\varphi\in G_v$ fixes all neighbours of $v$, then $\varphi$
must be trivial. Note that two vertices $(b_1,b_2,\ldots,b_m)$ and
$(c_1,c_2,\ldots,c_m)$ are adjacent if and only if
$b_i-c_i\equiv\pm1\hspace{0mm}\mod 2n$ for all $i$, and therefore
$$V(\Gamma)\subseteq V^*=\{(a_1,a_2,\ldots,a_m)\in
V(\otimes_{i=1}^m C_i)\mid a_1\equiv a_2\equiv\cdots\equiv
a_m\hspace{-0mm}\mod 2\}.$$ For each $u=(a_1,a_2,\ldots,a_m)\in
V^*$, there is a path of length $d=\max\{|a_1|,|a_2|,\ldots,|a_m|\}$
from $u$ to $v$, because we can reach 0 in $d$ steps in the
coordinates with absolute value $d$, and we can also reach 0 in $d$
steps in the other coordinates by walking back and forth as each
coordinate has the same parity. Hence $V(\Gamma)=V^*$, and
$d(u,v)=d$ follows easily.

Note that the number of geodesics from $u$ to $v$ is the product of
the number of ways in each coordinate to walk to 0 in $d$ steps.
Look at the $i$-th coordinate of $v$. For now we assume that $a_i\ge
0$, and let $k_i$ be the integer such that $a_i=d-2k_i$. If
$n>a_i>0$, we have $\lfloor\frac{d}{2}\rfloor\ge k_i\ge 0$, and
walking to 0 in $d$ steps is equivalent to the setting of Lemma
\ref{h1}. By the lemma, the ratio of numbers of $u-v$ geodesics
arriving from 1 and from $-1$ in the $i$-th coordinate is
${d-1\choose k_i}/{d-1\choose k_i-1}>1$. Since the automorphism
$\varphi$ fixes $(\pm1,\pm1,\ldots,\pm1)$ and preserves geodesics,
this ratio does not change under $\varphi$. Again by the Lemma,
$k_i$ must remain the same to keep this ratio, and hence the $i$-th
coordinate of $\varphi(u)$ must be $a_i$. If $a_i=0$, then $u$ has a
neighbour $w$ with the $i$-th coordinate 1. Note that $\varphi(u)$
is adjacent to $\varphi(w)$ with the $i$-th coordinate 1, and the
$i$-th coordinate of $\varphi(u)$ is either 0 or 2. In the latter
case, since $n>2>0$, by taking $\varphi^{-1}$ the above argument
implies $a_i=2$, a contradiction. Hence the $i$-th coordinate of
$\varphi(u)$ is $0$. Similarly if $a_i=n$, then the $i$-th
coordinate of $\varphi(u)$ is $n$. For negative $a_i$, by applying
the mirror version of Lemma \ref{h1}, we know that the $i$-th
coordinate of $\varphi(u)$ is $a_i$. Note that the above result is
true for every coordinate. Hence $\varphi(u)=u$ for every $u\in
V(\Gamma)$, and $\varphi$ is trivial.

Now look at the local structure around $v$. Note that two neighbours
of $v$ taking different values in $k$ coordinates have $2^{m-k}$
common neighbours. In particular, two neighbours of $v$ differ in
exactly one coordinate if and only of they have $2^{m-1}$ common
neighbours. Hence among the neighbours of $v$, the relation of
differing in exactly one coordinate is preserved under $G_v$. If we
draw an auxiliary edge between any two such neighbours of $v$, then
the $2^m$ neighbours of $v$ plus these auxiliary edges form a
hypercube $Q_m$ preserved under $G_v$. Since $\Gamma$ is rigid, an
automorphism of $G_v$ is completely determined by its action on the
neighbours of $v$, which also induces an automorphism of the
auxiliary $Q_m$. As a result, we have $|G_v|\le\Aut(Q_m)=2!\cdot
m!$, which finishes the proof.
\end{proof}
\begin{rem}
Let $H$ be the subgroup of $\times_{i=1}^m \mathbb{Z}_{2n}$
generated by $S=\{(\pm1,\pm1,\ldots,\pm1)\}$. Note that the
component $\Gamma$ in the above proof is actually isomorphic to the
Cayley graph of $H$ with respect to the generating set $S$.
\end{rem}

\begin{cor}\label{h3}
Suppose that $X_i$ is a $2n$-gon for $i\in\{1,2,\ldots,m\}$, where
$n$ is an integer at least 3. Then the automorphism group of a
component of $\otimes_{i=1}^m X_i$ can be generated by elements of
$\Aut(X_i)$'s together with permutations of $X_i$'s.
\end{cor}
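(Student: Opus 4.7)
The plan is to bootstrap from Proposition \ref{h2} by restricting to the 1-skeleton. Write $C_i := X_i^1$, a cycle of length $2n$, so by definition of graph tensor product the 1-skeleton of $\otimes_{i=1}^m X_i$ is $\otimes_{i=1}^m C_i$, and any component $Y$ of $\otimes X_i$ has 1-skeleton $\Gamma$ equal to a component of $\otimes C_i$. Restriction gives a homomorphism $r\colon \Aut(Y) \to \Aut(\Gamma)$, and I aim to show that $r$ is injective and that its image lifts to Cartesian complex automorphisms of $Y$; together these give the corollary.

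The first step is the injectivity of $r$, which I would obtain by proving that $Y$ is a simple complex in the sense of Definition \ref{g6}. By Proposition \ref{g1}, we may iterate Definition \ref{e5} to describe the faces of $\otimes X_i$: each face has length $[2n,\ldots,2n]=2n$ and is attached along a cycle of length $2n$ in $\Gamma$, so no face wraps around more than once. A short calculation tracking the notation $(C_\alpha,C_\beta)^{i^\delta}_\otimes$ shows that distinct index tuples produce attaching cycles that differ in at least one coordinate, either in the initial offset $i$ or in the orientation $\delta$. Hence distinct faces have distinct attaching cycles. Once $Y$ is simple, a complex automorphism fixing $\Gamma$ pointwise must fix each face as a set, and the radial condition in Definition \ref{e7} forces such a map to be the identity on the attached disc, giving injectivity of $r$.

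For the second step, by Proposition \ref{h2} every $\varphi^1 \in \Aut(\Gamma)$ can be written as $\sigma \circ (\rho_1,\ldots,\rho_m)$ with $\rho_i \in \Aut(C_i)$ and $\sigma$ a permutation of the $m$ factors. Because $X_i$ is a $2n$-gon, $\Aut(X_i) = \Aut(C_i)$: each $\rho_i$ extends uniquely to $X_i$ by radial extension on the attached disc, and $\sigma$ naturally permutes the identical complexes $X_i$. These assemble into a Cartesian complex automorphism $\psi$ of $\otimes X_i$ with $r(\psi) = \varphi^1$. Given $\varphi \in \Aut(Y)$, setting $\varphi^1 := r(\varphi)$ produces such a $\psi$; since $r(\psi)$ preserves $\Gamma$, $\psi$ preserves $Y$, and injectivity of $r$ forces $\varphi = \psi|_Y$. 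This shows $\Aut(Y)$ is generated by the Cartesian elements, as claimed.

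The main obstacle I expect is the simplicity check in the first step for general $m$, since the iterated attaching cycles demand careful bookkeeping of the index tuples and orientations arising from repeated applications of Definition \ref{e5}. The case $m = 2$ is a direct verification from the initial-vertex argument, and an induction on $m$ (using associativity to peel off one factor at a time) should extend it; once simplicity is in hand, the corollary follows cleanly from Proposition \ref{h2} together with the identification $\Aut(X_i) = \Aut(C_i)$.
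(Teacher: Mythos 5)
Your proposal is correct and follows essentially the same route as the paper: both arguments reduce the corollary to Proposition \ref{h2} by identifying $\Aut$ of a $2n$-gon with $\Aut$ of its boundary cycle and comparing the automorphism group of a component with that of its 1-skeleton (the paper phrases this as a cardinality bound on the vertex stabilizer, which is just your injectivity-of-restriction claim in disguise). Your explicit check that distinct faces of the product have distinct attaching cycles, so that restriction to the 1-skeleton is injective, is a detail the paper leaves implicit, and it does hold for $n\ge 2$.
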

\begin{proof}
Note that a $2n$-gon has the same automorphism group as its
1-skeleton, and $\otimes_{i=1}^m X_i$ has the same Cartesian
automorphisms as $\otimes_{i=1}^m X_i^1$. Hence a vertex stabilizer
$G_v$ of a component $X$ of $\otimes_{i=1}^m X_i$ has $2^m\cdot m!$
Cartesian automorphisms, and $|G_v|$ is at most the cardinality of
the stabilizer of $v$ in $X^1$, which is $2^m\cdot m!$ by
Proposition \ref{h2}.
\end{proof}

\begin{rem}
We do need the condition $n\ge 3$ in Proposition \ref{h2} and
Corollary \ref{h3}. For $n=2$, Figure \ref{figh1} illustrates a
component of the tensor product of two squares. Its 1-skeleton is
the complete bipartite graph $K_{4,4}$ with lots of non-Cartesian
automorphisms. With the face structure, there are much fewer complex
automorphisms, but swapping $(0,2)$ and $(2,0)$ still gives a
non-Cartesian complex automorphism.
\end{rem}

Now we formally define the face blocks mentioned in the beginning of
the chapter. An intuitive definition of a face block in a complex
tensor product $\otimes_{i=1}^m X_i$ would be any connected
component in $\otimes_{i=1}^m f_i$, where each $f_i$ is a face of
$X_i$. Note that if each $f_i$ is an even gon attached injectively,
then $\otimes_{i=1}^m f_i$ has $2^{m-1}$ components, and hence
$2^{m-1}$ face blocks. If these $f_i$'s are attached
non-injectively, then the above face blocks could have extra
incidence relations, and we might end up having fewer components. We
would like to define a face block regardless of attaching maps, so
we take the following definition.

\begin{defn}\label{h4}
For $i\in\{1,2,\ldots,m\}$, let $X_i$ be a polygonal cell complex
with only faces of even length $2n\ge 2$. Let $f_i$ be a face of
$X_i$ with corners labeled by $0,1,\ldots,2n-1$ cyclically. A {\bf
face block} generated by $f_1,f_2,\ldots,f_m$ is a subcollection of
faces generated by $f_1,f_2,\ldots,f_m$ such that two faces $f_a$
and $f_b$ are in the same face block if and only if a corner of
$f_a$ with label $(a_1,a_2,\ldots,a_m)$ and a corner $f_b$ with
label $(b_1,b_2,\ldots,b_m)$ have $$a_1-b_1\equiv
a_2-b_2\equiv\cdots\equiv a_m-b_m\hspace{0mm}\mod 2.$$
\end{defn}

\begin{rem} It is easy to see that a face block is well-defined no matter how
faces are cyclically labeled and no matter which corners are chosen
to verify the above criterion. In general it is not obvious whether
or not two faces are in the same face block of a complex tensor
product without knowing the tensor product structure. In the tensor
product of the following class of complexes, recognizing a face
block is much easier.
\end{rem}

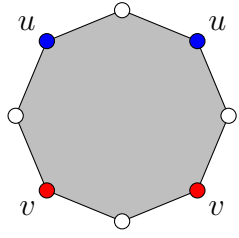
\begin{figure}
\begin{center}
\tikzstyle{place}=[circle,draw=black,fill=white,inner
sep=0pt,minimum size=2mm]
\begin{tikzpicture}[scale=1.4]

\draw [fill=lightgray]
(0:1)--(45:1)--(90:1)--(135:1)--(180:1)--(225:1)--(270:1)--(315:1)--cycle;

\node at (0:1) [place] {};

\node at (45:1) [place,fill=blue] {};

\node at (45:1) [above right] {$u$};

\node at (90:1) [place] {};

\node at (135:1) [place,fill=blue] {};

\node at (135:1) [above left] {$u$};

\node at (180:1) [place] {};

\node at (225:1) [place,fill=red] {};

\node at (225:1) [below left] {$v$};

\node at (270:1) [place] {};

\node at (315:1) [place,fill=red] {};

\node at (315:1) [below right] {$v$};

\end{tikzpicture}
\caption{a non-elementary complex}\label{figh2}
\end{center}
\end{figure}

\begin{defn}\label{h5} A connected polygonal cell complex $X$ is an {\bf
elementary} complex if $X$ satisfies the following three conditions:

\vspace{-3pt}
\begin{itemize}
\itemsep=-3pt
\item  [(1)] Every face of $X$ is of the same even length $\ge 2$.
\item  [(2)] No antipodal corners of a face are attached to the same vertex.
\item  [(3)] For any two vertices, there is at most one pair of
antipodal face corners attached.
\end{itemize}
\vspace{-8pt}
\end{defn}
\begin{rem}
Condition (3) basically says no two faces can be attached
antipodally, and in a face different pairs of antipodal corners are
not attached to the same pair of vertices. For example, the complex
in Figure \ref{figh2} is not an elementary complex.
\end{rem}

\begin{prop}\label{h6}
For $i\in\{1,2,\ldots,m\}$, let $X_i$ be an elementary complex with
faces of even length $2n\ge2$. Then in the complex tensor product
$\otimes_{i=1}^m X_i$, for any antipodal vertices $u$ and $v$ of a
face in $\otimes_{i=1}^m X_i$, there are exactly $2^{m-1}$ faces
having $u$ and $v$ as antipodal vertices, and these faces are in the
same face block. Moreover, for any two faces $f$ and $f'$ in the
same face block, we can find a series of faces $f_0,f_1,\ldots,f_k$
such that $f_0=f$, $f_k=f'$, $f_i$ and $f_{i+1}$ share antipodal
vertices for $i\in\{0,1,\ldots,k-1\}$, and $k\le n$.
\end{prop}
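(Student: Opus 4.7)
The plan is to parameterize each face of $\otimes_{i=1}^m X_i$ generated by an $m$-tuple $(f_1,\ldots,f_m)$ by a label $(s_2,\ldots,s_m,\epsilon_2,\ldots,\epsilon_m)\in(\mathbb{Z}/2n)^{m-1}\times\{\pm1\}^{m-1}$, so that the corner at position $t\in\mathbb{Z}/2n$ of the face is the vertex $(v_1^{(t)}, v_2^{(s_2+\epsilon_2 t)}, \ldots, v_m^{(s_m+\epsilon_m t)})$ of the product, where $v_i^{(k)}$ denotes the vertex at position $k$ on $\partial f_i$ and I set $s_1=0$, $\epsilon_1=+1$ by convention; this comes from iterating Definition~\ref{e5} using associativity. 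For the counting statement: given antipodal vertices $u,v$ of some face $F$ of $\otimes X_i$, condition~(3) of Definition~\ref{h5} forces the generating face $f_i$ in each factor to be uniquely determined by $(u_i,v_i)$, as is the antipodal position pair $\{a_i,a_i+n\}$ realizing $(u_i,v_i)$ in $f_i$. Requiring $u$ to sit at some position $t$ and $v$ at $t+n$ of the face then forces $t\equiv a_1\pmod n$ and $s_i\equiv a_i-\epsilon_i a_1\pmod{2n}$ for each $i\ge 2$ (the ``swap'' case $v$ at $t$, $u$ at $t+n$ gives the same $s_i$ because $\epsilon_i n\equiv n\pmod{2n}$). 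Hence exactly $2^{m-1}$ such faces exist, one per $(\epsilon_2,\ldots,\epsilon_m)$; and since $\epsilon_i$ is odd, $s_i\equiv a_i-a_1\pmod 2$ is independent of the $\epsilon_i$'s, so all of them share the same parity pattern of $(s_2,\ldots,s_m)$ and lie in a common face block.

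For the diameter bound I first establish the sharing criterion: $(s,\epsilon)$ and $(s',\epsilon')$ share an antipodal vertex pair iff there is an $r\in\mathbb{Z}/2n$ with $s_i'\equiv s_i+(\epsilon_i-\epsilon_i')r\pmod{2n}$ for every $i$. This comes from checking both matchings of the shared pair (``same order'' and ``swap''): conditions~(2)--(3) of Definition~\ref{h5} force per-coordinate position agreements modulo $2n$, and the $i=1$ coordinate (where $s_1=0, \epsilon_1=+1$) fixes the value of $t$, which plays the role of $r$ across all other coordinates. Equivalently, a ``move'' is specified by a subset $S:=\{i:\epsilon_i\ne\epsilon_i'\}\subseteq\{2,\ldots,m\}$ and a shift $r\in\mathbb{Z}/2n$, producing $s_i'=s_i+2\epsilon_i r$ for $i\in S$ and $s_i'=s_i$ for $i\notin S$. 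For $f,f'$ in the same block, set $p_i:=[\epsilon_i\ne\epsilon_i']$ and $\tau_i^*:=\epsilon_i(s_i'-s_i)/2\in\mathbb{Z}/n$ (well-defined by the parity constraint). I then construct a path of $\le n$ moves by prescribing $n$ moves indexed by $j\in\{0,1,\ldots,n-1\}$ with shifts $r_j:=j$; for each $i$, choose $T_i\subseteq\{0,\ldots,n-1\}$ with $|T_i|\equiv p_i\pmod 2$ and alternating sum $\sum_\ell(-1)^{\ell-1}t_\ell^{(i)}\equiv\tau_i^*\pmod n$, which is always achievable with $|T_i|\le 2$ (take $\emptyset$, $\{\tau_i^*\}$, or $\{0,n-\tau_i^*\}$ depending on $(p_i,\tau_i^*)$); then set $S_j:=\{i:j\in T_i\}$. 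A direct tracking of $\epsilon^{(j)},s^{(j)}$ through the $n$ moves shows $\epsilon_i$ is flipped $|T_i|$ times (net parity $p_i$) and $s_i$ accumulates the shift $2\epsilon_i\tau_i^*=s_i'-s_i$, so the target is reached; discarding trivial moves (those with $S_j=\emptyset$) gives $k\le n$.

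The hard part will be the sharing criterion itself: one must carefully reduce both the ``same'' and ``swap'' matchings to the single clean condition $s_i'\equiv s_i+(\epsilon_i-\epsilon_i')r\pmod{2n}$, with the same $r$ across all $i$ forced by the $i=1$ coordinate. Once that is in hand, the bound $k\le n$ is perhaps surprisingly independent of $m$, because although a single move couples the $s_i$-changes on $S$ through the shared shift $r$, the alternating-sum reformulation decouples the $m-1$ target values $\tau_i^*$: each is an alternating sum of a subset of $\{0,\ldots,n-1\}$ of size at most~$2$, so $n$ moves suffice no matter how large $m$ is.
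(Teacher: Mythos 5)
Your proof is correct, and it splits naturally into a half that matches the paper and a half that does not. The count of $2^{m-1}$ faces through a given antipodal pair $u,v$, and the fact that they all lie in one face block, is essentially the paper's own argument: conditions (2) and (3) of Definition \ref{h5} pin down, in each factor, the unique generating face $f_i$ and the unique antipodal corner pair over $\{u_i,v_i\}$, after which the faces are enumerated by the independent flips in coordinates $2,\ldots,m$, all of which preserve the parity pattern defining the block. For the chaining bound $k\le n$, however, your route is genuinely different. The paper argues geometrically inside the block: every vertex of the block is reachable from a vertex of $f$ by a walk of $(\pm1,\ldots,\pm1)$-steps of length at most $n-1$ ending on $f'$, each such step lies in a unique face of the block, consecutive steps give faces sharing a vertex, and two block faces sharing a vertex labelled $(a_1,\ldots,a_m)$ automatically share its antipode $(a_1+n,\ldots,a_m+n)$; padding the walk by one step in $f$ and one in $f'$ yields the chain with at most $n+1$ faces. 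You instead parameterize the block's faces by $(s_2,\ldots,s_m,\epsilon_2,\ldots,\epsilon_m)$, identify an antipodal-sharing move as $s_i\mapsto s_i+2\epsilon_i r$ on a flipped set $S$ with a common shift $r$, and solve the resulting system in at most $n$ moves by giving each coordinate a participation set $T_i$ of size at most $2$. Both arguments are sound and give the same bound; the paper's is shorter and makes the picture of Figure \ref{figh1} visible, while yours is more explicit and has the pleasant feature that only the easy (``if'') direction of your sharing criterion is ever used in the chain construction --- the direction you flag as the hard part is not actually needed for this proposition.
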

\begin{proof}

In $\otimes_{i=1}^m X_i$, suppose that $u=(u_1,u_2,\ldots,u_m)$ and
$v=(v_1,v_2,\ldots,v_m)$ are antipodal vertices of a face $f$
generated by $f_1,f_2,\ldots,f_m$, where $u_i$ and $v_i$ are
vertices of $X_i$ and $f_i$ is a face of $X_i$ for
$i\in\{1,2,\ldots,m\}$. Note that for each $i\in\{1,2,\ldots,m\}$,
projecting $f$ to $X_i$ gives $f_i$, and $f_i$ has $u_i$ and $v_i$
as antipodal vertices. Since $X_i$ is elementary, $u_i$ and $v_i$
are not the same vertex, and $f_i$ is the only face of $X_i$ having
$u_i$ and $v_i$ as antipodal vertices, with a unique pair of
antipodal corners attached to $u_i$ and $v_i$. Hence any face in
$\otimes_{i=1}^m X_i$ having $u$ and $v$ as antipodal vertices must
be generated by $f_1,f_2,\ldots,f_m$ in such a way that the
corresponding corners $c_i$ of the $f_i$'s at $u_i$ are combined
together. With the corner $c_1$ of $f_1$ fixed, flipping $f_i$ at
$c_i$ for $i\in\{1,2,\ldots,m\}$ gives all $2^{m-1}$ faces having
$u$ and $v$ as antipodal vertices, and these faces are in the same
face block.

Now suppose that $f$ and $f'$ are two faces in the same face block
$B$ generated by faces with corners labeled by $0,1,\ldots,2n-1$
cyclically. Then we can label corners in $B$ according to such a
corner labeling, and by following steps of
$(\pm1,\pm1,\ldots,\pm1)$, we can start from a vertex $v$ of $f$ to
reach any other vertex in $B$ in $n$ steps. In particular, there is
a unique vertex in $B$ such that we need $n$ steps to reach it from
$v$. Since $f'$ has more than one vertex, we can start from $v$ to
reach a vertex $u$ of $f'$ in $n-1$ steps. By adding one step in $f$
and one step in $f'$ if necessary, we can find a path from $f$ to
$f'$ of length at most $n+1$ such that the first and the last steps
are in $f$ and $f'$ respectively. Note that each
$(\pm1,\pm1,\ldots,\pm1)$ step determines a unique face in $B$, and
hence the above path determines a series of faces
$f_0,f_1,\ldots,f_k$ such that $f_0=f$, $f_k=f'$, and $k\le n$. If
$f_i$ and $f_{i+1}$ are determined by the same
$(\pm1,\pm1,\ldots,\pm1)$ step, then $f_i$ and $f_{i+1}$ are
actually the same face, and we can remove one of them from the
sequence. If $f_i$ and $f_{i+1}$ are determined by different
$(\pm1,\pm1,\ldots,\pm1)$ steps, then $f_i$ and $f_{i+1}$ are two
different faces with a common vertex with label
$(a_1,a_2,\ldots,a_m)$. Note that
$$(a_1,a_2,\ldots,a_m)+n(\pm1,\pm1,\ldots,\pm1)=(a_1+n,a_2+n,\ldots,a_m+n) \mod 2n,$$
which is also a common vertex of $f_i$ and $f_{i+1}$, and therefore
$f_i$ and $f_{i+1}$ share antipodal vertices. The above argument is
illustrated in Figure \ref{figh1}.
\end{proof}

Proposition \ref{h6} allows us to easily recognize a face block in a
complex tensor product. If we impose the following conditions on
each factor, then we can read the Cartesian structure of a complex
tensor product through the incidence relation of face blocks.

\begin{defn}\label{h7}

A connected polygonal cell complex $X$ is an {\bf ordinary} complex
if every face $f$ of $X$ is of the same even length $2n\ge 4$, and
satisfies the following extra conditions:

\vspace{-3pt}
\begin{itemize}
\itemsep=-3pt
\item  [(1)] If we label corners of $f$ cyclically from $1$ to $2n$,
then any two corners with different parities are not attached to the
same vertex.

\item  [(2)] For any face $f'$ incident to $f$, either $f$ has only one corner meeting $f'$, or $f$ has only two consecutive corners meeting
$f'$.

\end{itemize}
\vspace{-14pt}
\end{defn}
\begin{rem}
If the 1-skeleton of $X$ is bipartite, then $X$ satisfies (1)
automatically. Also note that a polygonal complex satisfies both (1)
and (2). The reader might have noticed that (2) implies the
condition (3) of an elementary complex. Since there are alternative
conditions serving our purpose as effectively as (2), we avoid
defining ordinary complexes as a subclass of elementary complexes.
\end{rem}

\begin{prop}\label{h8}
For $i\in\{1,2,\ldots,m\}$, suppose that $X_i$ is an ordinary
complex with faces of even length $2n\ge4$. Let $B$ be a face block
generated by $f_1,f_2,\ldots,f_m$ and $B'$ be a face block generated
by $f_1',f_2',\ldots,f_m'$, where $f_i$ and $f_i'$ are faces of
$X_i$. If $B$ and $B'$ are incident, then the following two
statements are equivalent:

\vspace{-3pt}
\begin{itemize}
\itemsep=-3pt
\item  [(1)] $\exists j$ such that $f_j$ is incident to $f_j'$ in
$X_j$, and $\forall i\neq j$ we have $f_i=f_i'$.
\item  [(2)] Every face of $B$ is incident to a face of $B'$.
\end{itemize}
\vspace{-8pt}
\end{prop}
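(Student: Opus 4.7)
The plan is to prove each direction separately, with (1)$\Rightarrow$(2) being essentially a direct construction and (2)$\Rightarrow$(1) requiring the main combinatorial argument.

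For the direction (1)$\Rightarrow$(2), I would fix an arbitrary face $F\in B$ and exhibit an incident face in $B'$. Since $\pi_{X_j}(F)=f_j$, the $j$-th coordinate of the boundary of $F$ traces around the whole cycle of $f_j$; in particular it passes through every vertex of $f_j\cap f_j'$, which is nonempty since $f_j$ and $f_j'$ are incident in $X_j$. Pick such a vertex $v^{*}\in f_j\cap f_j'$ and let $(w_1,\dots,w_{j-1},v^{*},w_{j+1},\dots,w_m)$ be the corresponding vertex of $F$, with $w_i$ a vertex of $f_i=f_i'$ for $i\neq j$. Choosing a corner $c_j'$ of $f_j'$ at $v^{*}$ and combining it with the corners of $F$ in the other coordinates yields a well-defined corner-tuple which, by the remark after Definition \ref{e5}, sits inside a unique face $F'$ of $\otimes_i X_i$ generated by $(f_1',\dots,f_m')$. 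A short parity check using Definition \ref{h4} (comparing the label of $F$'s corner at this vertex with that of $F'$'s) shows $F'\in B'$, and by construction $F$ and $F'$ share the vertex, so they are incident.

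For the harder direction (2)$\Rightarrow$(1), I would argue by contrapositive. Let $J=\{i:f_i\neq f_i'\}$ and assume $|J|\ge 2$; the goal is to exhibit a face $F\in B$ incident to no face of $B'$. Pick $j_1,j_2\in J$. Because each $X_{j_\ell}$ is an ordinary complex, condition (2) of Definition \ref{h7} forces $f_{j_\ell}\cap f_{j_\ell}'$ to consist of either one vertex or two consecutive vertices on the boundary of $f_{j_\ell}$; let $P_\ell\subseteq\mathbb{Z}_{2n}$ denote the at-most-two positions of $f_{j_\ell}$ where these common vertices sit. Any incidence between a face $F\in B$ and a face $F'\in B'$ must occur at a shared vertex, at which the $j_\ell$-coordinate necessarily lies in $P_\ell$ for $\ell=1,2$ simultaneously. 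Parameterize the vertices of $F$ cyclically as $(\alpha_1(k),\dots,\alpha_m(k))$ with $\alpha_i(k)=a_i+\varepsilon_i k\pmod{2n}$; then $F$ is incident to some face of $B'$ if and only if there exists $k$ with $\alpha_{j_1}(k)\in P_{j_1}$ and $\alpha_{j_2}(k)\in P_{j_2}$.

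I would then show that within $B$ there is enough freedom in the pair $(a_{j_1},\varepsilon_{j_1},a_{j_2},\varepsilon_{j_2})$ to \emph{misalign} these two constraints. Concretely, one first enumerates the phase-offsets/signs attainable by faces of $B$ (using Proposition \ref{h6} and the parity definition of a face block) to see that, holding all other coordinates fixed, one can independently shift the $(j_1,j_2)$-block phase by an arbitrary even amount. Since $|P_\ell|\le 2$ but $2n\ge 4$, the set $K_\ell=\{k:\alpha_{j_\ell}(k)\in P_\ell\}$ has at most two elements mod $2n$, and a pigeonhole/shift argument produces offsets for which $K_{j_1}\cap K_{j_2}=\emptyset$. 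The resulting face $F\in B$ has no vertex satisfying both coordinate constraints, contradicting (2). The main obstacle will be this last step: verifying that the parity-defined face block $B$ really does allow the needed independent phase shift in two distinct coordinates; handling the cases $|P_\ell|=1$ versus $|P_\ell|=2$, and separately the edge-incidence case (where $F$ and $F'$ share an edge rather than only a vertex) using condition (1) of Definition \ref{h7} to rule out the mixed-parity configurations, is where the care is needed.
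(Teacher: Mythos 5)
Your overall architecture matches the paper's proof: the same two directions, the same use of condition (1) of Definition \ref{h7} to control parities and of condition (2) to confine the incident corners of $f_{j}$ with $f_{j}'$ to a ``window'' of at most two consecutive positions, and the same strategy of misaligning two such windows to produce a face of $B$ missing $B'$. There are, however, two concrete soft spots. First, in (1)$\Rightarrow$(2): your ``short parity check comparing the label of $F$'s corner with that of $F'$'s'' cannot by itself show $F'\in B'$, because $F$ and $F'$ are generated by different face tuples and Definition \ref{h4} only compares faces generated by the \emph{same} tuple. You must anchor the check to the standing hypothesis that $B$ and $B'$ are incident: take a corner $c$ of $B$ meeting a corner $c'$ of $B'$, use condition (1) of Definition \ref{h7} to see that for $i\neq j$ the corners $c_i$ and $c_i'$ sitting at the same vertex of $f_i=f_i'$ have equal parity, and only then conclude that your constructed $F'$ lies in the same block as the face of $B'$ containing $c'$. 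Without this anchoring, $F'$ could land in one of the other $2^{m-1}-1$ blocks generated by $f_1',\dots,f_m'$.

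Second, and more seriously, the separation step in (2)$\Rightarrow$(1) as you describe it --- shift the phase of one coordinate ``by an arbitrary even amount'' and apply pigeonhole --- fails in the boundary case $2n=4$ with $|P_{j_1}|=|P_{j_2}|=2$: e.g.\ if $K_{j_1}=\{0,1\}$ and $K_{j_2}=\{1,2\}$ in $\mathbb{Z}_4$, the only even shifts give $\{1,2\}$ or $\{3,0\}$, both of which meet $K_{j_1}$. You list the signs $\varepsilon_{j_\ell}$ among the free parameters, and indeed the orientation flip is what rescues this case (a flip turns the achievable windows for $K_{j_2}$ into \emph{all} consecutive pairs), but your concrete argument invokes only shifts. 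The paper sidesteps the case analysis entirely: it places the two windows at antipodal positions of the generated face (automatically disjoint since $n\ge 2$) and then, if the resulting face is in the wrong block, flips $f_{j_2}$ about its window --- which changes the block parity without moving the window. You should either adopt that construction or explicitly carry out the flip case in your shift argument. Finally, your contrapositive only treats $|J|\ge 2$; the remaining failure modes of (1) ($f_i=f_i'$ for all $i$ with $B\neq B'$, or a unique $j$ with $f_j$ and $f_j'$ disjoint) must be dismissed using the hypothesis that $B$ and $B'$ are incident together with the disjointness of distinct blocks over the same generating tuple.
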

\begin{proof}
Assume (1). Without loss of generality, we can assume that $j=1$.
Since $B$ and $B'$ are incident, there is a face corner $c$ of $B$
meeting a face corner $c'$ of $B'$. Suppose that $c$ is the
combination of corners $c_i$ of the $f_i$'s, and $c'$ is the
combination of corners $c_i'$ of the $f_i'$'s. Note that $c_1$ of
$f_1$ meets $c_1'$ of $f_1'$ in $X_1$. Also note that for $i\neq 1$,
$c_i$ and $c_i'$ are in the same face $f$, and they are either the
same corner or different corners attached to the same vertex. In
particular, by condition (1) of Definition \ref{h7}, $c_i$ and
$c_i'$ have the same parity under cyclic $\mathbb{Z}_{2n}$ labeling
for $i\neq 1$. Let $f$ be an arbitrary face of $B$ generated by
combining $c_1$ of $f_1$ with corners $\overline{c_i}$ of the
$f_i$'s for $i\neq 1$. By Definition \ref{h4}, $\overline{c_i}$ has
the same parity as $c_i$, and therefore has the same parity as
$c_i'$. Then again by Definition \ref{h4}, the face $f'$ generated
by combining $c_1'$ of $f_1'$ with $\overline{c_i}$'s of the $f_i$'s
is a face of $B'$. It is obvious that $f$ is incident to $f'$. To
summarize, given an arbitrary face $f$ of $B$, we can find a face
$f'$ of $B'$ incident to $f$. Hence (1) implies (2).

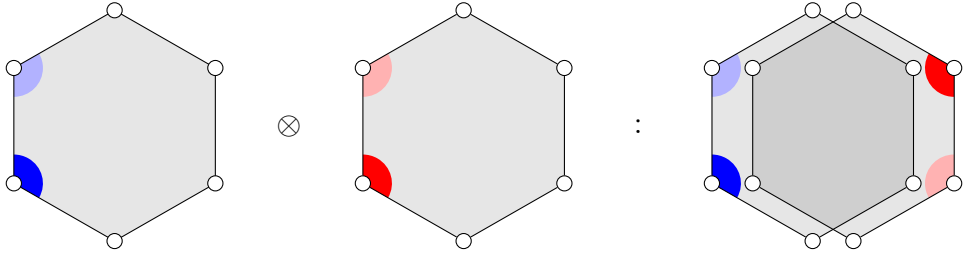
\begin{figure}
\begin{center}
\tikzstyle{place}=[circle,draw=black,fill=white,inner
sep=0pt,minimum size=2mm]
\begin{tikzpicture}[scale=1.53]

\fill [opacity=.1]
(30:1)--++(150:1)--++(210:1)--++(270:1)--++(330:1)--++(30:1)--cycle;

\fill [opacity=.1]
(3,0)++(30:1)--++(150:1)--++(210:1)--++(270:1)--++(330:1)--++(30:1)--cycle;

\fill [opacity=.1]
(6,0)++(30:1)--++(150:1)--++(210:1)--++(270:1)--++(330:1)--++(30:1)--cycle;

\fill [opacity=.1]
(6.35,0)++(30:1)--++(150:1)--++(210:1)--++(270:1)--++(330:1)--++(30:1)--cycle;

\fill [blue!30] (150:1)--+(270:.25) arc (270:390:.25)--cycle;

\fill [blue] (210:1)--+(-30:.25) arc (-30:90:.25)--cycle;

\fill [blue!30] (6,0)++(150:1)--+(270:.25) arc (270:390:.25)--cycle;

\fill [blue] (6,0)++(210:1)--+(-30:.25) arc (-30:90:.25)--cycle;

\fill [red!30] (3,0)++(150:1)--+(270:.25) arc (270:390:.25)--cycle;

\fill [red] (3,0)++(210:1)--+(-30:.25) arc (-30:90:.25)--cycle;

\fill [red!30] (6.35,0)++(-30:1)--+(90:.25) arc (90:210:.25)--cycle;

\fill [red] (6.35,0)++(30:1)--+(150:.25) arc (150:270:.25)--cycle;

\draw
(30:1)--++(150:1)--++(210:1)--++(270:1)--++(330:1)--++(30:1)--cycle;

\node at (30:1) [place] {};

\node at (90:1) [place] {};

\node at (150:1) [place] {};

\node at (210:1) [place] {};

\node at (270:1) [place] {};

\node at (330:1) [place] {};

\node at (1.5,0) {$\otimes$};

\node at (4.5,0) {:};

\node (o) at (3,0) {};

\path (o) +(30:1) node (a) [place] {} +(90:1) node (b) [place] {}
+(150:1) node (c) [place] {} +(210:1) node (d) [place] {} +(270:1)
node (e) [place] {} +(330:1) node (f) [place] {};

\draw (a)--(b)--(c)--(d)--(e)--(f)--(a);

\node (x) at (6,0) {};

\path (x) +(30:1) node (g) [place] {} +(90:1) node (h) [place] {}
+(150:1) node (i) [place] {} +(210:1) node (j) [place] {} +(270:1)
node (k) [place] {} +(330:1) node (l) [place] {};

\draw (g)--(h)--(i)--(j)--(k)--(l)--(g);

\node (y) at (6.35,0) {};

\path (y) +(30:1) node (m) [place] {} +(90:1) node (n) [place] {}
+(150:1) node (o) [place] {} +(210:1) node (p) [place] {} +(270:1)
node (q) [place] {} +(330:1) node (r) [place] {};

\draw (m)--(n)--(o)--(p)--(q)--(r)--(m);

\end{tikzpicture}
\caption{how to avoid incidence}\label{figh3}
\end{center}
\end{figure}

Assume (2). If $f_i$ and $f_i'$ are disjoint, then $B$ and $B'$ are
disjoint, which contradicts (2). Hence for each
$i\in\{1,2,\ldots,m\}$, $f_i$ and $f_i'$ are either incident or
actually the same. Suppose that there is more than one $j$, say for
$j\in\{1,2\}$, such that $f_j$ and $f_j'$ are incident. By condition
(2) of Definition \ref{h7}, $f_1$ and $f_2$ have either one corner
or two consecutive corners meeting $f_1'$ and $f_2'$ respectively.
Pick two consecutive corners of $f_1$ containing all corners meeting
$f_1'$ and colour them blue. Similarly pick two consecutive corners
of $f_2$ containing all corners meeting $f_2'$ and colour them red.
Consider the faces generated by $f_1,f_2,\ldots,f_m$ with the
following corner combination: coloured corners of $f_1$ and $f_2$
are placed at the opposite positions, as illustrated in Figure
\ref{figh3}. Note that these faces are disjoint with faces generated
by $f_1',f_2',\ldots,f_m'$. If $B$ does not contain any of these
faces, we can flip two red corners of $f_2$ to generate faces of
$B$, and the resulting faces are still disjoint with faces generated
by $f_1',f_2',\ldots,f_m'$. In other words, we can find a face of
$B$ incident to no face in $B'$, a contradiction. So there is at
most one $j$ such that $f_j$ and $f_j'$ are incident. Moreover,
condition (1) of Definition \ref{h7} implies that different face
blocks generated by $f_1,f_2,\ldots,f_m$ are disjoint. Since $B$ and
$B'$ are incident, we know that there is exactly one $j$ such that
$f_j$ and $f_j'$ are incident. Hence (2) implies (1).
\end{proof}
\begin{rem}
Note that condition (2) of Definition \ref{h7} is only used for the
argument illustrated in Figure \ref{figh3}. It is not hard to have
alternative conditions serving this purpose, especially when the
length of faces is higher. We also want to point out that through
finer examination of incidence relation between face blocks, it is
possible to obtain more information such as how $f_j$ meets $f_j'$
in $X_j$, perhaps under weaker conditions.
\end{rem}

With Propositions \ref{h6} and \ref{h8}, in a tensor product
$X=X_1\otimes X_2\otimes\cdots\otimes X_m$ where each $X_i$ is an
elementary ordinary complex with only faces of even length $2n\ge
4$, we can recognize face blocks and the Cartesian structure of $X$
through the incidence relation on faces, which is preserved under
automorphisms of $X$. Now we define a graph $\Gamma_X$ to encode the
Cartesian structure of $X$. Let $\Gamma_X$ be a simple graph with
vertex set $\times_{i=1}^m F(X_i)$, where a vertex
$(f_1,f_2,\ldots,f_m)$ represents all faces of $X$ generated by
$f_1,f_2,\ldots,f_m$, such that two vertices are adjacent if and
only if they take the same face in $m-1$ coordinates, and have
incident faces in the remaining coordinate. Let $\Gamma_{X_i}$ be a
simple graph with vertex set $F(X_i)$, such that two vertices are
adjacent if and only if the corresponding faces are incident in
$X_i$. Notice that
$\Gamma_X=\Gamma_{X_1}\Box\,\Gamma_{X_2}\Box\cdots\Box\,
\Gamma_{X_i}$, where $\Box$ is the Cartesian product of graphs (see
\cite{product} for the definition). Figure \ref{figh4} illustrates
the case $m=2$, where $B^{i,j}=(f_1^i,f_2^j)$ represents all faces
generated by $f_1^i$ and $f_2^j$. The following theorem due to
Imrich \cite{imrich} and Miller \cite{miller} restricts the
automorphism group of $\Gamma_X$.

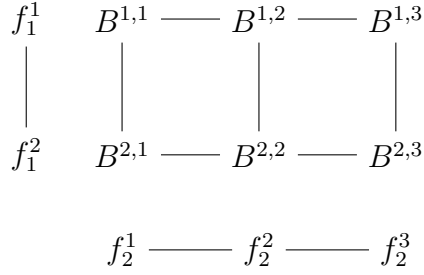
\begin{figure}
\begin{center}
\tikzstyle{place}=[circle,draw=black,fill=white,inner
sep=0pt,minimum size=2mm]
\begin{tikzpicture}[scale=1.8]

\node (a) at (0,.3) {};

\path (a) ++(.7,0) node (1) {$f_2^1$} ++(1,0) node (2) {$f_2^2$}
++(1,0) node (3) {$f_2^3$};

\node (c) at (0,1) {$f_1^2$};

\path (c) ++(.7,0) node (7) {$B^{2,1}$} ++(1,0) node (8) {$B^{2,2}$}
++(1,0) node (9) {$B^{2,3}$};

\node (b) at (0,2) {$f_1^1$};

\path (b) ++(.7,0) node (4) {$B^{1,1}$} ++(1,0) node (5) {$B^{1,2}$}
++(1,0) node (6) {$B^{1,3}$};

\draw (4)--(5)--(6) (7)--(8)--(9) (7)--(4) (8)--(5) (9)--(6) ;

\draw (1)--(2)--(3) (b)--(c);

\end{tikzpicture}
\caption{Cartesian structure of face blocks}\label{figh4}
\end{center}
\end{figure}

\begin{thm}\label{h9}
Suppose that $\Gamma$ is a finite simple connected graph with a
factorization
$\Gamma=\Gamma_1\Box\,\Gamma_2\Box\cdots\Box\,\Gamma_m$, where each
$\Gamma_i$ is prime with respect to Cartesian product. Then the
automorphism group of $\Gamma$ is generated by automorphisms of
prime factors and permutations of isomorphic factors.
\end{thm}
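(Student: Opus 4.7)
The plan is to recover the Cartesian factorization from the abstract graph structure, so that any automorphism must respect it. I would proceed in three stages.

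First, I would invoke (or briefly sketch) the Sabidussi--Vizing theorem for finite connected graphs, which gives the existence and uniqueness of the prime factorization $\Gamma=\Gamma_1\Box\cdots\Box\Gamma_m$ up to reordering of isomorphic factors. This lets me speak of a canonical partition of $E(\Gamma)$ into \emph{color classes} $E_1,\ldots,E_m$, where $E_i$ consists of those edges that lie in a $\Gamma_i$-fiber, i.e.\ edges of the form $\{(v_1,\ldots,v_i,\ldots,v_m),(v_1,\ldots,u_i,\ldots,v_m)\}$ with $v_iu_i\in E(\Gamma_i)$.

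Second, I would show that this coloring is canonical, in the sense that it can be recovered from the abstract graph $\Gamma$ alone. The standard tool is the Djokovi\'c--Winkler relation $\Theta$ on edges, where $e\Theta e'$ iff $e$ and $e'$ are opposite sides of some induced 4-cycle in $\Gamma$; on a connected graph that is a Cartesian product of prime factors, the transitive closure $\Theta^*$ has as equivalence classes exactly the sets $E_1,\ldots,E_m$. The verification has two parts: (a) edges inside the same fiber are $\Theta^*$-equivalent (easy, using fibers through adjacent base points), and (b) edges from different color classes are not $\Theta^*$-equivalent (this is where primality is used: any coarsening of the coloring would produce a nontrivial Cartesian refactorization of some $\Gamma_i$).

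Third, given an arbitrary $\varphi\in\Aut(\Gamma)$, since $\varphi$ preserves $\Theta^*$ it permutes the color classes, inducing a permutation $\pi\in S_m$. Two classes can only be swapped if the corresponding fibers are isomorphic as graphs, so $\Gamma_i\cong\Gamma_{\pi(i)}$. Picking a base vertex $v$ and looking at the fibers through $v$, the map $\varphi$ sends the $\Gamma_i$-fiber through $v$ onto the $\Gamma_{\pi(i)}$-fiber through $\varphi(v)$; restricted there, it gives an isomorphism $\alpha_i:\Gamma_i\to\Gamma_{\pi(i)}$. A coordinatewise argument, using that $\varphi$ is determined on each coordinate independently once the color classes have been sorted (because fibers commute in a Cartesian product), shows that $\varphi$ agrees with the Cartesian automorphism built from $\pi$ and the $\alpha_i$'s composed with fixed isomorphisms between isomorphic factors.

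The main obstacle is step two, specifically part (b): proving that the Djokovi\'c--Winkler closure really does separate the color classes when each $\Gamma_i$ is prime. This is where the argument needs to genuinely use primality, and it is the technically delicate part of the original proofs of Imrich and Miller. Once the canonicity of the coloring is in hand, steps one and three are essentially bookkeeping.
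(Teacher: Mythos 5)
The paper offers no proof of this theorem---it is quoted from Imrich and Miller---so your sketch can only be measured against the standard argument, whose architecture (recover a canonical edge colouring from the abstract graph, deduce that every automorphism permutes the colour classes, then read off the Cartesian form) you have correctly reproduced. The genuine gap is in step two, and it sits in the part you labelled ``easy.'' The transitive closure of the relation ``opposite sides of an induced $4$-cycle'' is in general \emph{strictly finer} than the product colouring: in $P_3\,\Box\,K_2$ (the $2\times 3$ grid, both factors prime) the two edges of a single $P_3$-layer lie on no common square and are joined by no chain of squares, so they land in different $\Theta^*$-classes; your claim (a) fails, and the three resulting classes do not even correspond to a factorization. (The same counterexample defeats the genuine distance-based Djokovi\'c--Winkler relation, which is not the square relation you wrote down.) The repair is the relation $\tau$: two incident edges spanning no chordless square must receive the same colour, because in a Cartesian product incident edges of \emph{different} colours always span a square. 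The canonical colouring is the class partition of $(\Theta\cup\tau)^*$, and the technically hard step---which your sketch omits entirely---is the decomposition lemma (Feder; in older form, Sabidussi--Vizing) that these classes really do define a Cartesian factorization. Primality of the $\Gamma_i$ is then used to show this finest product colouring coincides with the given one; it is not needed for your part (b), which is the easy containment (opposite edges of any square in a Cartesian product automatically have equal colour, so no two colour classes are ever merged). In short, your (a) and (b) have their difficulties exactly reversed.

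Step three is essentially right once the colouring is canonical, but you should also record that the connected components of each colour class are precisely the layers, so that $\varphi$ carries layers to layers before the coordinatewise description of $\varphi$ makes sense. As written the proposal does not close; with $\tau$ added and the decomposition lemma supplied it becomes the standard proof.
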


We can not guarantee $\Gamma_{X_i}$ is prime, but at least $X_i$ is
indeed a prime complex.

\begin{prop}\label{h10}
Let $Y$ be an elementary complex. Then $Y$ is a prime with respect
to complex tensor product, and $Y$ is not a component of any complex
tensor product.
\end{prop}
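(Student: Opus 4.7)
The plan is to prove the stronger half of the statement --- that $Y$ cannot occur as a connected component of any tensor product with at least one face in each factor --- from which primality follows (if either factor in a hypothetical decomposition $Y=X_1\otimes X_2$ has no faces, then the product has none, contradicting that the elementary complex $Y$ has faces of length at least $2$). By associativity (Proposition \ref{g1}) an $m$-fold product can be grouped as a binary one, so the binary case suffices.

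Assume for contradiction that $Y$ is a component of $X_1\otimes X_2$ with each $X_i$ carrying a face. Pick any face $f$ of $Y$ generated by $f_1\in F(X_1)$ and $f_2\in F(X_2)$ of boundary lengths $n_1,n_2$ with attaching cycles $C_1=(v_0,\ldots,v_{n_1-1})$ and $C_2=(u_0,\ldots,u_{n_2-1})$; then $[n_1,n_2]=2n$, the common face length of $Y$. The central computation is to compare the two faces $f_{1,2}^{0^0}$ and $f_{1,2}^{0^1}$ of $X_1\otimes X_2$ (same $i=0$, opposite $\delta$): both contain the vertex $(v_0,u_0)$ and therefore lie in the component $Y$, while the indexing convention of Definition \ref{e5} (made explicit in the remark following it) keeps them as distinct face-set elements even when their attaching cycles coincide as cyclic sequences. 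Tracing the boundary cycles, $f_{1,2}^{0^0}$ has antipodal vertex pair $\{(v_0,u_0),(v_{n\bmod n_1},u_{n\bmod n_2})\}$ and $f_{1,2}^{0^1}$ has $\{(v_0,u_0),(v_{n\bmod n_1},u_{-n\bmod n_2})\}$. Since $n_2\mid 2n$, one has $n\equiv -n\pmod{n_2}$, and so $u_{n\bmod n_2}=u_{-n\bmod n_2}$ on $C_2$; both faces therefore carry the same antipodal vertex pair in $Y$.

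Two cases yield a contradiction with elementariness. If the two members of this pair coincide in $Y$ (as may happen if either $f_1$ or $f_2$ is attached non-injectively), then $f_{1,2}^{0^0}$ has antipodal corners attached to a single vertex, violating condition (2) of Definition \ref{h5}. Otherwise the pair consists of two distinct vertices, and the two distinct faces $f_{1,2}^{0^0}$ and $f_{1,2}^{0^1}$ each contribute their own antipodal corner pair attached to these same two vertices, producing at least two such pairs and violating condition (3) of Definition \ref{h5}.

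The most delicate point will be verifying that $f_{1,2}^{0^0}$ and $f_{1,2}^{0^1}$ genuinely provide two distinct pairs of antipodal corners in every degenerate situation --- small $n_1$, $n_1=n_2$, or non-injective attaching, in each of which the two faces may share an attaching cycle. This is a matter of unwrapping the author's convention that $F(X_1\otimes X_2)$ is indexed by tuples $(\alpha,\beta,i,\delta)$, so distinct tuples give distinct $2$-cells and hence distinct corners; once this is acknowledged, the single congruence $n\equiv -n\pmod{n_2}$ drives the contradiction uniformly across all $n_1,n_2$ with $[n_1,n_2]=2n$.
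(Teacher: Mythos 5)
Your argument is correct and follows the same route as the paper's (much terser) proof: both locate two distinct faces of the product that are attached antipodally at a common pair of vertices and then contradict conditions (2)/(3) of Definition \ref{h5}, with your congruence $n\equiv -n\pmod{n_2}$ being exactly the mechanism the paper leaves implicit. The only cosmetic point is that you should relabel $C_1$ and $C_2$ so that the boundary of your chosen face $f$ actually passes through $(v_0,u_0)$ (i.e.\ so that $f=f_{1,2}^{0^\delta}$), which is always possible and is what guarantees that $f_{1,2}^{0^0}$ and $f_{1,2}^{0^1}$ both lie in the component $Y$.
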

\begin{proof}
Suppose that there exist complexes $Y_1$ and $Y_2$ such that $Y$ is
a component of $Y_1\otimes Y_2$. Note that a face of $Y$ is of even
length, and must be generated by either two even faces or by one
even and one odd face. In either case, by Definition \ref{e5}, $Y$
will have faces antipodally attached together, violating that $Y$ is
elementary.
\end{proof}

\begin{figure}
\begin{center}
\tikzstyle{place}=[circle,draw=black,inner sep=0pt,minimum size=2mm]
\begin{tikzpicture}[scale=1]

\node at (-.35,-.3) {$v$}; \node at (10.6,-.3) {$v$};

\node at (-.45,2.15) {$v_5$}; \node at (10.68,2.15) {$v_2$};

\node at (-.4,2.5) {$v_3$}; \node at (10.73,2.5) {$v_6$};

\node at (-.35,2.85) {$v_1$}; \node at (10.78,2.85) {$v_4$};

\node at (-.45,4.95) {$v_2$}; \node at (10.68,4.95) {$v_5$};

\node at (-.4,5.3) {$v_6$}; \node at (10.73,5.3) {$v_3$};

\node at (-.35,5.65) {$v_4$}; \node at (10.78,5.65) {$v_1$};

\path [fill,opacity=.1] (0,-.3)-- ++(45:1)-- ++(2,0)-- ++(-45:1)--
++(45:1)-- ++(2,0)-- ++(-45:1)-- ++(45:1)-- ++(2,0)-- ++(-45:1)--
++(225:1)-- ++(-2,0)-- ++(135:1)-- ++(225:1)-- ++(-2,0)--
++(135:1)-- ++(225:1)-- ++(-2,0)-- ++(135:1);

\path [fill,opacity=.1] (0,2.2)-- ++(45:1)-- ++(2,0)-- ++(-45:1)--
++(45:1)-- ++(2,0)-- ++(-45:1)-- ++(45:1)-- ++(2,0)-- ++(-45:1)--
++(225:1)-- ++(-2,0)-- ++(135:1)-- ++(225:1)-- ++(-2,0)--
++(135:1)-- ++(225:1)-- ++(-2,0)-- ++(135:1);

\path [fill,opacity=.1] (0.05,2.5)-- ++(45:1)-- ++(2,0)--
++(-45:1)-- ++(45:1)-- ++(2,0)-- ++(-45:1)-- ++(45:1)-- ++(2,0)--
++(-45:1)-- ++(225:1)-- ++(-2,0)-- ++(135:1)-- ++(225:1)--
++(-2,0)-- ++(135:1)-- ++(225:1)-- ++(-2,0)-- ++(135:1);

\path [fill,opacity=.1] (0.1,2.8)-- ++(45:1)-- ++(2,0)-- ++(-45:1)--
++(45:1)-- ++(2,0)-- ++(-45:1)-- ++(45:1)-- ++(2,0)-- ++(-45:1)--
++(225:1)-- ++(-2,0)-- ++(135:1)-- ++(225:1)-- ++(-2,0)--
++(135:1)-- ++(225:1)-- ++(-2,0)-- ++(135:1);

\path [fill,opacity=.1] (0,5)-- ++(45:1)-- ++(2,0)-- ++(-45:1)--
++(45:1)-- ++(2,0)-- ++(-45:1)-- ++(45:1)-- ++(2,0)-- ++(-45:1)--
++(225:1)-- ++(-2,0)-- ++(135:1)-- ++(225:1)-- ++(-2,0)--
++(135:1)-- ++(225:1)-- ++(-2,0)-- ++(135:1);

\path [fill,opacity=.1] (0.05,5.3)-- ++(45:1)-- ++(2,0)--
++(-45:1)-- ++(45:1)-- ++(2,0)-- ++(-45:1)-- ++(45:1)-- ++(2,0)--
++(-45:1)-- ++(225:1)-- ++(-2,0)-- ++(135:1)-- ++(225:1)--
++(-2,0)-- ++(135:1)-- ++(225:1)-- ++(-2,0)-- ++(135:1);

\path [fill,opacity=.1] (0.1,5.6)-- ++(45:1)-- ++(2,0)-- ++(-45:1)--
++(45:1)-- ++(2,0)-- ++(-45:1)-- ++(45:1)-- ++(2,0)-- ++(-45:1)--
++(225:1)-- ++(-2,0)-- ++(135:1)-- ++(225:1)-- ++(-2,0)--
++(135:1)-- ++(225:1)-- ++(-2,0)-- ++(135:1);

\path [fill,opacity=.1] (-2.9,2.2) -- ++(45:1)-- ++(0,2)--
++(135:1)-- ++(225:1)-- ++(0,-2)-- ++(-45:1);

\path (-2.9,2.2) +(0,-.1) node [below] {$u_4$} ++(45:1) node [below
right] {$u_3$}-- ++(0,2) node [above right] {$u_2$}-- ++(135:1)
+(0,.1) node [above] {$u_1$}-- ++(225:1) node [above left] {$u_6$}--
++(0,-2) node [below left] {$u_5$}-- ++(-45:1);

\draw (-2.9,2.2) node [place,fill=white] {}-- ++(45:1) node
[place,fill=orange] {}-- ++(0,2) node [place,fill=white] {}--
++(135:1) node [place,fill=orange] {}-- ++(225:1) node
[place,fill=white] {}-- ++(0,-2) node [place,fill=orange] {}--
++(-45:1) node [place,fill=white] {};

\draw (0,-.3) node [place,fill=white] {}-- ++(45:1) node
[place,fill=white] {}-- ++(2,0) node [place,fill=white] {}--
++(-45:1) node [place,fill=white] {}-- ++(45:1) node
[place,fill=white] {}-- ++(2,0) node [place,fill=white] {}--
++(-45:1) node [place,fill=white] {}-- ++(45:1) node
[place,fill=white] {}-- ++(2,0) node [place,fill=white] {}--
++(-45:1) node [place,fill=white] {}-- ++(225:1) node
[place,fill=white] {}-- ++(-2,0) node [place,fill=white] {}--
++(135:1) node [place,fill=white] {}-- ++(225:1) node
[place,fill=white] {}-- ++(-2,0) node [place,fill=white] {}--
++(135:1) node [place,fill=white] {}-- ++(225:1) node
[place,fill=white] {}-- ++(-2,0) node [place,fill=white] {}--
++(135:1) node [place,fill=white] {};

\draw (0,2.2) node [place,fill=orange] {}-- ++(45:1) node
[place,fill=white] {}-- ++(2,0) node [place,fill=orange] {}--
++(-45:1) node [place,fill=white] {}-- ++(45:1) node
[place,fill=orange] {}-- ++(2,0) node [place,fill=white] {}--
++(-45:1) node [place,fill=orange] {}-- ++(45:1) node
[place,fill=white] {}-- ++(2,0) node [place,fill=orange] {}--
++(-45:1) node [place,fill=white] {}-- ++(225:1) node
[place,fill=orange] {}-- ++(-2,0) node [place,fill=white] {}--
++(135:1) node [place,fill=orange] {}-- ++(225:1) node
[place,fill=white] {}-- ++(-2,0) node [place,fill=orange] {}--
++(135:1) node [place,fill=white] {}-- ++(225:1) node
[place,fill=orange] {}-- ++(-2,0) node [place,fill=white] {}--
++(135:1) node [place,fill=orange] {};

\draw (0.05,2.5) node [place,fill=orange] {}-- ++(45:1) node
[place,fill=white] {}-- ++(2,0) node [place,fill=orange] {}--
++(-45:1) node [place,fill=white] {}-- ++(45:1) node
[place,fill=orange] {}-- ++(2,0) node [place,fill=white] {}--
++(-45:1) node [place,fill=orange] {}-- ++(45:1) node
[place,fill=white] {}-- ++(2,0) node [place,fill=orange] {}--
++(-45:1) node [place,fill=white] {}-- ++(225:1) node
[place,fill=orange] {}-- ++(-2,0) node [place,fill=white] {}--
++(135:1) node [place,fill=orange] {}-- ++(225:1) node
[place,fill=white] {}-- ++(-2,0) node [place,fill=orange] {}--
++(135:1) node [place,fill=white] {}-- ++(225:1) node
[place,fill=orange] {}-- ++(-2,0) node [place,fill=white] {}--
++(135:1) node [place,fill=orange] {};

\draw (0.1,2.8) node [place,fill=orange] {}-- ++(45:1) node
[place,fill=white] {}-- ++(2,0) node [place,fill=orange] {}--
++(-45:1) node [place,fill=white] {}-- ++(45:1) node
[place,fill=orange] {}-- ++(2,0) node [place,fill=white] {}--
++(-45:1) node [place,fill=orange] {}-- ++(45:1) node
[place,fill=white] {}-- ++(2,0) node [place,fill=orange] {}--
++(-45:1) node [place,fill=white] {}-- ++(225:1) node
[place,fill=orange] {}-- ++(-2,0) node [place,fill=white] {}--
++(135:1) node [place,fill=orange] {}-- ++(225:1) node
[place,fill=white] {}-- ++(-2,0) node [place,fill=orange] {}--
++(135:1) node [place,fill=white] {}-- ++(225:1) node
[place,fill=orange] {}-- ++(-2,0) node [place,fill=white] {}--
++(135:1) node [place,fill=orange] {};

\draw (0,5) node [place,fill=white] {}-- ++(45:1) node
[place,fill=orange] {}-- ++(2,0) node [place,fill=white] {}--
++(-45:1) node [place,fill=orange] {}-- ++(45:1) node
[place,fill=white] {}-- ++(2,0) node [place,fill=orange] {}--
++(-45:1) node [place,fill=white] {}-- ++(45:1) node
[place,fill=orange] {}-- ++(2,0) node [place,fill=white] {}--
++(-45:1) node [place,fill=orange] {}-- ++(225:1) node
[place,fill=white] {}-- ++(-2,0) node [place,fill=orange] {}--
++(135:1) node [place,fill=white] {}-- ++(225:1) node
[place,fill=orange] {}-- ++(-2,0) node [place,fill=white] {}--
++(135:1) node [place,fill=orange] {}-- ++(225:1) node
[place,fill=white] {}-- ++(-2,0) node [place,fill=orange] {}--
++(135:1) node [place,fill=white] {};

\draw (0.05,5.3) node [place,fill=white] {}-- ++(45:1) node
[place,fill=orange] {}-- ++(2,0) node [place,fill=white] {}--
++(-45:1) node [place,fill=orange] {}-- ++(45:1) node
[place,fill=white] {}-- ++(2,0) node [place,fill=orange] {}--
++(-45:1) node [place,fill=white] {}-- ++(45:1) node
[place,fill=orange] {}-- ++(2,0) node [place,fill=white] {}--
++(-45:1) node [place,fill=orange] {}-- ++(225:1) node
[place,fill=white] {}-- ++(-2,0) node [place,fill=orange] {}--
++(135:1) node [place,fill=white] {}-- ++(225:1) node
[place,fill=orange] {}-- ++(-2,0) node [place,fill=white] {}--
++(135:1) node [place,fill=orange] {}-- ++(225:1) node
[place,fill=white] {}-- ++(-2,0) node [place,fill=orange] {}--
++(135:1) node [place,fill=white] {};

\draw (0.1,5.6) node [place,fill=white] {}-- ++(45:1) node
[place,fill=orange] {}-- ++(2,0) node [place,fill=white] {}--
++(-45:1) node [place,fill=orange] {}-- ++(45:1) node
[place,fill=white] {}-- ++(2,0) node [place,fill=orange] {}--
++(-45:1) node [place,fill=white] {}-- ++(45:1) node
[place,fill=orange] {}-- ++(2,0) node [place,fill=white] {}--
++(-45:1) node [place,fill=orange] {}-- ++(225:1) node
[place,fill=white] {}-- ++(-2,0) node [place,fill=orange] {}--
++(135:1) node [place,fill=white] {}-- ++(225:1) node
[place,fill=orange] {}-- ++(-2,0) node [place,fill=white] {}--
++(135:1) node [place,fill=orange] {}-- ++(225:1) node
[place,fill=white] {}-- ++(-2,0) node [place,fill=orange] {}--
++(135:1) node [place,fill=white] {};

\node at (1.707,-.3) {$f^1$}; \node at (5.121,-.3) {$f^2$}; \node at
(8.535,-.3) {$f^3$}; \node at (-2.9,3.907) {$f$};

\end{tikzpicture}
\caption{tensor product of a hexagon with a 3-hexagon
necklace}\label{figh5}
\end{center}
\end{figure}
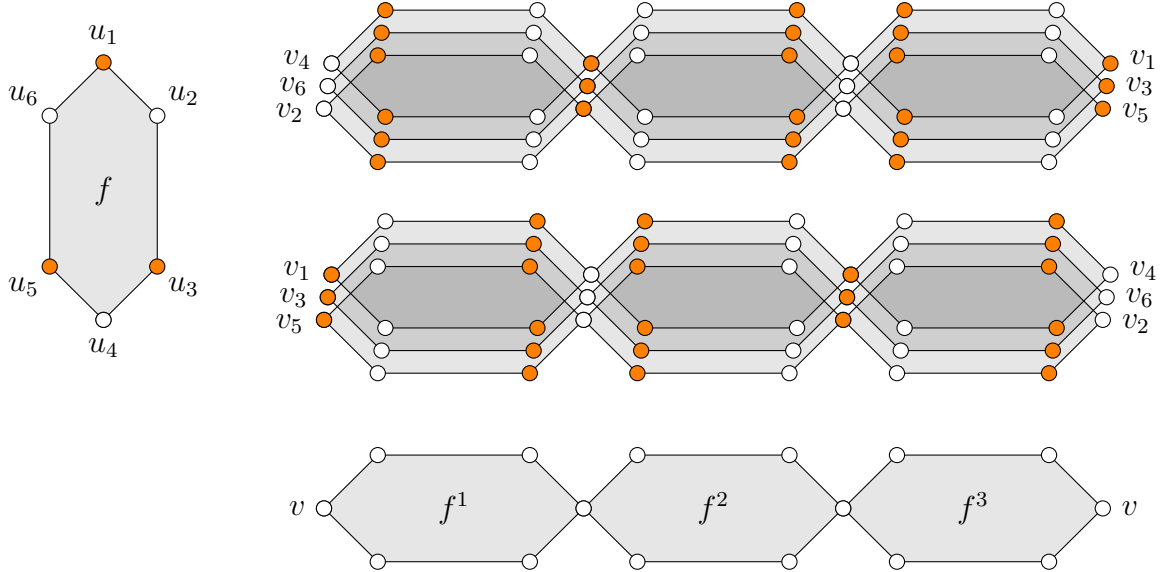

Note that in Figure \ref{figh4}, each $B^{i,j}$ actually contains
two face blocks generated by $f_1^i$ and $f_2^j$, and in general
each vertex of $\Gamma_X$ defined above contains $2^{m-1}$ face
blocks. Even if we have some control over the automorphism group of
$\Gamma_X$, having multiple face blocks at one vertex of $\Gamma_X$
could lead to non-Cartesian automorphisms of $X$. Let us look at the
tensor product of a hexagon with a 3-hexagon necklace as illustrated
in Figure \ref{figh5}, where $v_i$ is the vertex generated by $u_i$
and $v$, and coloured vertices in the product are generated by
coloured $u_1$, $u_3$, and $u_5$. For brevity, half of the faces in
the product are omitted. Consider the automorphism $\rho$ of the
product induced by fixing $f$, $f^1$, and $f^3$ but flipping $f^2$
(swapping the top and the bottom edges) in two factors. Then $\rho$
fixes the four face blocks on the left and right, and permutes
vertices in each of the two middle blocks. In particular, we can
permute vertices in a block while its two incident blocks are fixed.
Therefore we can permute vertices in one middle block and fix all
other five blocks. This gives a non-Cartesian automorphism.

There are two main reasons why we have the above non-Cartesian
automorphism. First, there is more than one face block generated by
the same faces lying in the same component of the product. Secondly,
factors are not rigid enough, so the action on one face block can
not affect incident blocks, and can not be transmitted to blocks
generated by the same faces. We suspect that if either of these two
reasons is absent, then each component of the product might have
only Cartesian automorphisms. In particular, if the 1-skeleton of
each factor is bipartite, then face blocks generated by the same
faces are in different components. Also note that if a complex is a
surface, it is rigid enough that the action on one face completely
determines the whole automorphism. So far we do not have a definite
result yet, and hence we pose the following two conjectures. We hope
to resolve these problems in the near future.

\begin{conj}\label{h11}
For $i\in\{1,2,\ldots,m\}$, suppose that $X_i$ is an elementary
ordinary complex with faces of the same even length $2n\ge 6$, and
$X_i$ has bipartite 1-skeleton. Then for any component $X$ of the
complex tensor product $\otimes_{i=1}^m X_i$, $\Aut(X)$ can be
generated by automorphisms of $X_i$'s together with permutations of
isomorphic factors.
\end{conj}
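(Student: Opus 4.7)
The plan is to exploit the bipartite hypothesis to eliminate the first obstruction identified after Figure~\ref{figh5}, and then to combine the face-block technology of Propositions~\ref{h6} and \ref{h8} with the Imrich--Miller theorem (Theorem~\ref{h9}) and the rigidity established in Corollary~\ref{h3}.

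First I would use Weichsel's theorem (Theorem~\ref{bf}), iterated over the $m$ bipartite factors, to see that $\otimes_{i=1}^m X_i^1$ has exactly $2^{m-1}$ connected components, indexed by the parity vectors in $\{0,1\}^m/\langle\mathbf{1}\rangle$. By Definition~\ref{h4}, the $2^{m-1}$ face blocks generated by a single tuple $(f_1,\ldots,f_m)$ are indexed by the very same parity data, and a direct check on vertex parities shows that each such block lies in a distinct component. Fix a component $X$ from now on; this gives a canonical bijection between face blocks of $X$ and tuples in $\prod_i F(X_i)$, exactly one block per tuple.

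Next I would introduce the face-block incidence graph $\Gamma_X$ with vertex set $\prod_i F(X_i)$ and with edges recording incidence of the corresponding blocks inside $X$. By Proposition~\ref{h8}, this incidence coincides with Cartesian adjacency, so
\[
  \Gamma_X \;\cong\; \Gamma_{X_1}\,\Box\,\Gamma_{X_2}\,\Box\cdots\Box\,\Gamma_{X_m}.
\]
Any $\varphi\in\Aut(X)$ preserves face blocks (recognizable internally via antipodal pairs, by Proposition~\ref{h6}) and their incidences, hence induces $\overline{\varphi}\in\Aut(\Gamma_X)$. Applying Theorem~\ref{h9} to a prime Cartesian factorization of $\Gamma_X$ refining the decomposition above, and then regrouping the prime factors into the original $\Gamma_{X_i}$'s according to their isomorphism classes, one forces $\overline{\varphi}$ to be a product of automorphisms of the $\Gamma_{X_i}$ followed by a permutation of isomorphic factors.

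It remains to lift this block-level Cartesian description to $\varphi$ itself, and this is where I expect the real difficulty. Each individual block $B$ is a component of the tensor product of $m$ copies of the relevant $2n$-gons, and since $2n\ge 6$, Corollary~\ref{h3} yields that $\Aut(B)$ is generated by the $\Aut(f_i)$'s and permutations of isomorphic $f_j$'s; thus $\varphi|_B$ is already of Cartesian shape relative to the tuple that generates $B$. The obstruction is \emph{global} coherence: one must show that the coordinatewise data chosen on one block force the matching data on every incident block, so that they assemble into a single element of $\prod_i\Aut(X_i)$ (possibly followed by a permutation of the $X_j$). My strategy is to transport the local Cartesian data across edges of $\Gamma_X$: conditions (1) and (2) of Definition~\ref{h7} force each pair of incident faces in a given $X_j$ to meet in a structurally unique way (at one corner, or at two consecutive corners), so the datum transported by $\varphi$ along an edge of $\Gamma_X$ is uniquely determined by the datum on either endpoint, and connectedness of each $\Gamma_{X_i}$ then propagates a globally consistent choice. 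The hardest step is ruling out a hidden ``gluing twist'' where two blocks meet through a non-injectively attached face; I expect this may require a careful induction along geodesic paths in $\Gamma_X$, analogous to the rigidity argument in the proof of Proposition~\ref{h2}.
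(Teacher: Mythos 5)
The statement you are addressing is posed in the paper as Conjecture \ref{h11}; the paper explicitly offers no proof (``So far we do not have a definite result yet''), so there is no proof to compare against, and the only question is whether your outline actually closes the problem. It does not. The first two stages are sound and follow the strategy the paper itself sketches: iterating Weichsel's theorem gives $2^{m-1}$ components, a parity check places the $2^{m-1}$ face blocks over a fixed tuple into distinct components so that a component $X$ carries exactly one block per tuple, and Propositions \ref{h6} and \ref{h8} make blocks and their incidences recognizable, so every $\varphi\in\Aut(X)$ induces $\overline{\varphi}\in\Aut(\Gamma_X)$. But already at the Theorem \ref{h9} step there is a gap you gloss over: the paper warns that $\Gamma_{X_i}$ need not be prime for the Cartesian product, so the prime factorization of $\Gamma_X$ may be strictly finer than $\Gamma_{X_1}\Box\cdots\Box\,\Gamma_{X_m}$, and an automorphism of $\Gamma_X$ may permute prime Cartesian factors \emph{across} different $\Gamma_{X_i}$'s. ``Regrouping according to isomorphism classes'' does not rule this out; you would need to show either that such an interchange cannot occur or that it is always induced by a genuine isomorphism of complexes. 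Moreover, Theorem \ref{h9} says nothing about automorphisms of $X$ that fix every block setwise, which is where the real danger lies.

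The decisive gap is the lifting step, which you yourself flag as ``the real difficulty'' and leave as an expectation. The claim you rest it on --- that conditions (1) and (2) of Definition \ref{h7} make the automorphism datum transported along an edge of $\Gamma_X$ uniquely determined by the datum at either endpoint --- is not true at the level of generality needed. Two incident faces $f_j, f_j'$ of an ordinary complex may meet at a single vertex; then the blocks over $(\ldots,f_j,\ldots)$ and $(\ldots,f_j',\ldots)$ intersect only in the fibre over that vertex, and fixing one block pointwise constrains its neighbour only on that small set, leaving room for a nontrivial automorphism of the neighbouring block (a ``flip'' fixing the shared fibre). This is precisely the mechanism behind the paper's non-Cartesian automorphism in Figure \ref{figh5}: ``factors are not rigid enough, so the action on one face block can not affect incident blocks.'' The bipartite hypothesis removes the \emph{other} obstruction identified there (two blocks over one tuple lying in the same component), but it does not by itself restore this rigidity, and nothing in your propagation argument explains why, in the bipartite setting, a block-local flip cannot be completed to a non-Cartesian automorphism of the whole component. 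Until that is proved (or a counterexample is produced), the conjecture remains open, and your proposal is an outline of the known strategy rather than a proof.
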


\begin{conj}\label{h12}
For $i\in\{1,2,\ldots,m\}$, suppose that $X_i$ is an elementary
ordinary complex with faces of the same even length $2n\ge 6$, and
$X_i$ has surface structure. Then for any component $X$ of the
complex tensor product $\otimes_{i=1}^m X_i$, $\Aut(X)$ can be
generated by automorphisms of $X_i$'s together with permutations of
isomorphic factors.
\end{conj}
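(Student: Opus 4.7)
The plan is to combine the block-level analysis from Propositions \ref{h6} and \ref{h8} with the rigidity supplied by the surface hypothesis, using Theorem \ref{h9} to pin down the Cartesian action on blocks and then lifting it back to an actual automorphism of the component. Fix a component $X$ of $\otimes_{i=1}^m X_i$ and $\varphi\in\Aut(X)$. Since every $X_i$ is elementary of even face length $2n\ge 6$, Proposition \ref{h6} identifies face blocks from the shared-antipodal-vertex relation, which $\varphi$ manifestly preserves. Since every $X_i$ is ordinary, Proposition \ref{h8} shows that the block-incidence graph of $X$ embeds as a component of the Cartesian product $\Gamma_{X_1}\Box\Gamma_{X_2}\Box\cdots\Box\Gamma_{X_m}$, and $\varphi$ induces an automorphism $\bar\varphi$ of this graph.

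First I would apply Theorem \ref{h9} to factor this block-incidence graph into Cartesian-prime graphs and argue, in analogy with Theorem \ref{g11}, that the Cartesian factorization groups into blocks corresponding to individual complexes $X_i$: Proposition \ref{h10} supplies the primality of each $X_i$ as a complex, and one uses the surface and ordinariness hypotheses to rule out a factorization of some $\Gamma_{X_i}$ as a graph that would straddle distinct $X_i$'s. At this point $\bar\varphi$ is, up to a permutation of isomorphic factors, a product of automorphisms of the individual $\Gamma_{X_i}$'s.

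Next I would lift each such $\Gamma_{X_i}$-automorphism to an honest element $\varphi_i\in\Aut(X_i)$ using the surface hypothesis. On a 2-manifold cell complex, a cellular automorphism is determined by its action on any single oriented flag, and conversely any incidence-preserving bijection of faces that is locally consistent at one flag extends uniquely; in particular, the action of $\bar\varphi$ on $\Gamma_{X_i}$ extends to at most one $\varphi_i\in\Aut(X_i)$, whose existence must then be verified by chasing the local $2$-manifold structure of $X_i$ along a spanning tree of $\Gamma_{X_i}$. Once the $\varphi_i$'s are produced, the Cartesian automorphism $\psi:=(\varphi_1,\ldots,\varphi_m)$ (post-composed with the permutation of isomorphic factors detected on $\bar\varphi$) agrees with $\varphi$ on at least one face block. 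I would then use incidence propagation: by Proposition \ref{h8}, knowing $\varphi$ on one block determines $\varphi$ on every neighboring block up to the $2^{m-1}$ ambiguity from Proposition \ref{h6}, and this ambiguity is resolved by simultaneously tracking incidence in each coordinate through the surface structure of the corresponding $X_i$.

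The main obstacle is precisely the pathology exhibited in Figure \ref{figh5}: a factor that is merely an elementary ordinary complex (a necklace of hexagons) admits a cellular automorphism supported on a single face, and this supports a non-Cartesian automorphism of the product. The surface condition is imposed to forbid exactly this phenomenon, since a cellular automorphism of a closed surface whose action is trivial on every face except one must itself be trivial there. Turning this intuition into a proof, most plausibly by an induction on block-distance in $\Gamma_X$ showing that any local discrepancy between $\varphi$ and the candidate $\psi$ restricts to an automorphism of some $X_i$ with support contained in a single face, and then invoking surface rigidity to obtain a contradiction, is the technical crux on which the conjecture hinges.
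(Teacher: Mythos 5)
This statement is posed in the paper as an open conjecture: the author explicitly writes that no definite result is available and offers only the heuristic (via the Figure \ref{figh5} example) that surface rigidity should eliminate non-Cartesian automorphisms supported on a single face block. Your proposal follows that same heuristic, but it is an outline of a strategy rather than a proof, and several of its load-bearing steps are asserted without argument.

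Concretely: (i) to obtain the induced automorphism $\bar\varphi$ of $\Gamma_X$ you must first show that $\varphi$ preserves the relation ``these two face blocks are generated by the same $m$-tuple of faces.'' Each vertex of $\Gamma_X$ bundles $2^{m-1}$ distinct face blocks, and the paper identifies precisely this multiplicity, together with insufficient rigidity of factors, as the mechanism producing the non-Cartesian automorphism of Figure \ref{figh5}; nothing in your argument controls how $\varphi$ permutes or mixes the blocks sitting over a single vertex of $\Gamma_X$. (ii) Theorem \ref{h9} requires each $\Gamma_{X_i}$ to be prime with respect to the Cartesian product, and the paper states outright that this cannot be guaranteed; Proposition \ref{h10} gives primality of $X_i$ as a \emph{complex} under the \emph{tensor} product, which says nothing about Cartesian-primality of its face-incidence graph, so your parenthetical ``one uses the surface and ordinariness hypotheses to rule out'' such a factorization is exactly the missing lemma, not a consequence of anything established. (iii) The lifting of an automorphism of $\Gamma_{X_i}$ to an element of $\Aut(X_i)$ is nontrivial in the existence direction --- an automorphism of the face-adjacency graph of a surface complex need not be induced by a cellular automorphism --- and you defer it to an unexecuted ``chase.'' (iv) You yourself flag the final induction on block-distance as the unresolved technical crux. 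As it stands the proposal is a plausible plan of attack consistent with the paper's own discussion, but it does not close the conjecture.
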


\section{Acknowledgements}

The author appreciates the support of National Center for
Theoretical Science, Taiwan, and would like to thank Ian Leary for
introducing this topic and valuable advices.

\end{document}